\newcommand{\referenza}{}
\newtheorem{thm}{Theorem}[section]
\newtheorem*{thm*}{Theorem \referenza}
\newtheorem{cor}[thm]{Corollary}
\newtheorem*{cor*}{Corollary \referenza}
\newtheorem{lem}[thm]{Lemma}
\newtheorem*{lem*}{Lemma \referenza}
\newtheorem{prop}[thm]{Proposition}
\newtheorem*{prop*}{Proposition \referenza}
\newtheorem{conj}[thm]{Conjecture}
\newtheorem*{conj*}{Conjecture \referenza}
\newtheorem{rmk}[thm]{Remark}
\newtheorem{exa}[thm]{Example}
\newtheorem{defi}[thm]{Definition}
\numberwithin{equation}{section}
\def \N {\mathbb N}
\def \R {\mathbb R}
\def \C {\mathbb C}
\def \P {\mathbb P}
\newcommand{\sspace}{\text{\--}}
\newcommand{\ssspace}{\text{\textdblhyphen}}
\title{On Chern-Yamabe problem}
\author{Daniele Angella}
\address[Daniele Angella]{Centro di Ricerca Matematica ``Ennio de Giorgi''\\
Collegio Puteano, Scuola Normale Superiore\\
Piazza dei Cavalieri 3\\
56126 Pisa, Italy
}
\email{daniele.angella@sns.it}
\email{daniele.angella@gmail.com}
\author{Simone Calamai}
\address[Simone Calamai]{Dipartimento di Matematica e Informatica ``Ulisse Dini''\\
Università di Firenze\\
via Morgagni 67/A\\
50134 Firenze, Italy}
\email{scalamai@math.unifi.it}
\email{simocala@gmail.com}
\author{Cristiano Spotti}
\address[Cristiano Spotti]{Department of Pure Mathematics and Mathematical Statistics\\
Centre for Mathematical Sciences\\
University of Cambridge\\
Wilberforce Road, CB3 0WB\\
Cambridge, United Kingdom
}
\email{c.spotti@dpmmms.cam.ac.uk}
\keywords{Chern-Yamabe problem, constant Chern scalar curvature, Chern connection, Gauduchon metric}
\thanks{During the preparation of the work, the first author has been granted by a research fellowship by Istituto Nazionale di Alta Matematica INdAM and by a Junior Visiting Position at Centro di Ricerca ``Ennio de Giorgi''; he is also supported by the Project PRIN ``Varietà reali e complesse: geometria, topologia e analisi armonica'', by the Project FIRB ``Geometria Differenziale e Teoria Geometrica delle Funzioni'', by SNS GR14 grant ``Geometry of non-Kähler manifolds'', and by GNSAGA of INdAM. The second author is supported by the Project PRIN ``Varietà reali e complesse: geometria, topologia e analisi armonica'', by  the Simons Center for Geometry and Physics, Stony Brook University, by SNS GR14 grant ``Geometry of non-Kähler manifolds'', and by GNSAGA of INdAM. During the preparation of this note, the third author has been supported by the grant ANR-10-BLAN 0105 at the ENS Paris and by a Post-doctoral bursary in the framework of Dr. Julius Ross EPSRC Career Acceleration Fellowship (EP/J002062/1).}
\subjclass[2010]{53B35, 32Q99, 53A30}
\date{\today}
\dedicatory{
Dedicated to Professor Paul Gauduchon on the occasion of his 70th birthday.\\
Buon compleanno!
}
\begin{document}

\begin{abstract}
 We initiate the study of an analogue of the Yamabe problem for complex manifolds. More precisely, fixed a conformal Hermitian structure on a compact complex manifold, we are concerned in the existence of metrics with constant Chern scalar curvature. In this note, we set the problem and we provide a positive answer when the expected constant Chern scalar curvature is non-positive. In particular, this includes the case when the Kodaira dimension of the manifold is non-negative.
 Finally, we give some remarks on the positive curvature case, showing existence in some special cases and the failure, in general, of uniqueness of the solution.
\end{abstract}

\maketitle

\section*{Introduction}

In this note, as a tentative to study special metrics on complex (possibly non-K\"ahler) manifolds, we investigate the existence of Hermitian metrics having constant scalar curvature with respect to the Chern connection.
A first natural basic question to ask, motivated by the classical Yamabe problem, is whether such metrics can actually exist in any (Hermitian) conformal class. We are going to show, via standard analytical techniques, that the answer is indeed affirmative in many situations. On the other hand, definitely not too surprisingly, things become more subtle in the case when the expected value of the constant Chern scalar curvature is positive, which can happen only when the Kodaira dimension of the manifold is equal to $-\infty$. We hope to discuss more systematically this last case in subsequent work.

\medskip

Let us now introduce more carefully our problem, giving some contextualization.
The solution of the Yamabe problem for compact differentiable manifolds (see, e.g., \cite{yamabe, trudinger, schoen, aubin-book, lee-parker}) assures the existence of a special Riemannian metric in every conformal class, characterized by having constant scalar curvature. In K\"ahler geometry, one seeks for K\"ahler metrics with constant scalar curvature in given cohomology classes, hoping to find necessary and sufficient conditions for their existence. On the other hand, for general complex non-K\"ahler manifolds, the {\itshape qu\^ete} for special metrics is somehow more mysterious. The abundance of Hermitian metrics leads to restrict to special metrics characterized by ``cohomological'' conditions. A foundational result in this sense is the theorem proven by P. Gauduchon in \cite{gauduchon-cras1977}. It states that, on a compact complex manifold of complex dimension $n\geq2$, every conformal class of Hermitian metrics contains a {\em standard}, also called {\em Gauduchon}, metric $\omega$, that is, satisfying $\partial\overline\partial\,\omega^{n-1}=0$. 
On the other hand, one can look instead at metrics with special ``curvature'' properties, related to the underlying complex structure. The focus of the present note is exactly on this second direction. In particular on the property of having \emph{constant Chern scalar curvature} in fixed conformal classes (hence neglecting cohomological conditions, for the moment). We should remark that this goes in different direction with respect to both the classical Yamabe problem and the Yamabe problem for almost Hermitian manifolds studied by H. del Rio and S. Simanca in \cite{delrio-simanca} (compare Remark \ref{Comparison}).
One motivation for considering exactly such ``complex curvature scalar'', between other natural ones \cite{liu-yang, liu-yang-2}, comes from the importance of the Chern Ricci curvature in non-K\"ahler Calabi-Yau problems (compare, for example, \cite{tosatti-weinkove}). We also stress that it would be very interesting, especially in view of having possibly ``more canonical'' metrics on complex manifolds, to study the problem of existence of metrics satisfying both cohomological and curvature conditions (e.g., Gauduchon metrics with constant Chern scalar 
curvature). 

\medskip

We now describe the problem in more details. Let $X$ be a compact complex manifold of complex dimension $n$ endowed with a Hermitian metric $\omega$. Consider the {\em Chern connection}, that is,  the unique connection on $T^{1,0}X$ preserving the Hermitian structure and whose part of type $(0,1)$ coincides with the Cauchy-Riemann operator associated to the holomorphic structure. The \emph{Chern scalar curvature} can be succinctly expressed as
$$ S^{Ch} (\omega) = \mathrm{tr}_\omega i\overline\partial\partial \log \omega^n,$$
where $\omega^n$ denotes the volume element.

Denote by $\mathcal{C}^H_X$ the space of Hermitian conformal structures on $X$.
On a fixed conformal class, there is an obvious  action of the following ``gauge group'':
$$ \mathcal{G}_X(\{\omega\}) := \mathcal{H}\mathrm{Conf}(X,\{\omega\}) \times \R^{+}, $$
where $\mathcal{H}\mathrm{Conf}(X,\{\omega\})$ is the group of biholomorphic automorphisms of $X$ preserving the conformal structure $\{\omega\}$ and $\R^+$ the scalings. It is then natural to study the moduli space
$$ \mathcal{C}h\mathcal{Y}a(X,\{\omega\}) :=  \left. \left\{\omega{'} \in \{\omega\} \; \middle|\; S^{Ch}(\omega{'}) \text{ is constant}\right\} \middle/ \mathcal{G}_X(\{\omega\}) \right. . $$
In analogy with the classical Yamabe problem, it is tempting to ask whether in each conformal class there always exists at least one metric having constant Chern scalar curvature. That is, one can ask whether the following Chern-Yamabe conjecture holds.

\renewcommand{\referenza}{\ref{conj:cy}}
\begin{conj*}[Chern-Yamabe conjecture]
 Let $X$ be a compact complex manifold of complex dimension $n$, and let $\{\omega\} \in \mathcal{C}^H_X$ be a Hermitian conformal structure on $X$. Then
 $$ \mathcal{C}h\mathcal{Y}a(X,\{\omega\}) \neq \varnothing \;. $$
\end{conj*}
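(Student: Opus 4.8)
The plan is to reduce the conjecture to a single scalar semilinear elliptic equation and to solve it according to the sign of a conformal invariant, following the template of the Kazdan--Warner/Yamabe analysis. First I would fix, by Gauduchon's theorem, the Gauduchon representative $\omega$ of the class $\{\omega\}$ (normalized to unit volume); this is the natural gauge because the associated \emph{Chern Laplacian} $\Delta_\omega u := \mathrm{tr}_\omega i\overline\partial\partial u$ then has vanishing average. Indeed, integrating by parts twice,
\[
\int_X \Delta_\omega u \,\omega^n \;=\; c_n \int_X u\, i\overline\partial\partial\,\omega^{n-1} \;=\; 0
\]
precisely because $\partial\overline\partial\,\omega^{n-1}=0$. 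Together with ellipticity and the strong maximum principle (so that $\ker\Delta_\omega=\R$), this shows $\ker\Delta_\omega^\ast=\R$ and hence that $\Delta_\omega$ surjects onto the space of $\omega^n$-mean-zero functions. This Fredholm picture is the backbone of everything that follows.

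Next I would compute the conformal transformation law. Writing $\omega_u=e^{2u}\omega$, from $\omega_u^n=e^{2nu}\omega^n$ and $\mathrm{tr}_{\omega_u}=e^{-2u}\mathrm{tr}_\omega$ one gets
\[
S^{Ch}(\omega_u)\;=\;e^{-2u}\bigl(2n\,\Delta_\omega u+S^{Ch}(\omega)\bigr),
\]
so that $S^{Ch}(\omega_u)\equiv\lambda$ is equivalent to the scalar equation $2n\,\Delta_\omega u+S^{Ch}(\omega)=\lambda\,e^{2u}$. Integrating this against $\omega^n$ and using the vanishing average kills the left-hand Laplacian and forces $\lambda\int_X e^{2u}\omega^n=\int_X S^{Ch}(\omega)\,\omega^n$; thus $\sgn\lambda=\sgn\int_X S^{Ch}(\omega)\,\omega^n=:\sgn\underline S$, a quantity that (using uniqueness up to scale of the Gauduchon metric) one checks to be an invariant of $\{\omega\}$. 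After an overall scaling I may then normalize $\lambda\in\{-1,0,+1\}$ to match this sign, and the conjecture splits into three cases.

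I would then dispatch the non-positive cases. If $\underline S=0$ the equation is the \emph{linear} problem $2n\,\Delta_\omega u=-S^{Ch}(\omega)$ with mean-zero right-hand side, solved immediately by the Fredholm statement above. If $\underline S<0$, I would first solve the linear equation $2n\,\Delta_\omega v=S^{Ch}(\omega)-\underline S$ and pass to the background $e^{2v}\omega$, whose Chern scalar curvature $e^{-2v}\underline S$ is now \emph{pointwise negative}; for such a background the equation $2n\,\Delta_\omega u+e^{2u}=-S^{Ch}(\omega)$ admits large constants as sub- and supersolutions, so monotone iteration produces a smooth solution, and the maximum principle yields uniqueness. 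Finally, I would record that $\underline S\le0$ whenever $\kappa(X)\ge0$, by pairing $c_1^{BC}(X)$ against $[\omega^{n-1}]$ using a section of a power of $K_X$; this gives the stated Kodaira-dimension corollary.

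The hard part is the positive case $\underline S>0$, which is exactly why the statement is phrased as a conjecture. Here no maximum-principle barrier is available, the natural variational (Moser--Trudinger / mountain-pass) approach is threatened by loss of coercivity and concentration of the conformal factor, and --- as the authors note --- uniqueness can genuinely fail. I would attempt a Moser--Trudinger inequality adapted to the non-self-adjoint operator $\Delta_\omega$ together with a careful blow-up analysis to recover compactness in favorable subcases (e.g.\ small $\underline S$, or special complex structures), while expecting that a complete resolution, if true at all, requires new ideas beyond the conformal reduction.
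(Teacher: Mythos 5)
Your proposal does not (and candidly does not claim to) prove the conjecture as stated: you resolve only the cases $\Gamma_X(\{\omega\})=0$ and $\Gamma_X(\{\omega\})<0$ and leave the positive case open. This is precisely the status of the statement in the paper itself, which poses it as Conjecture \ref{conj:cy} and proves it only under the hypothesis of non-positive Gauduchon degree (Theorems \ref{thm:cy-zero-gaud} and \ref{thm:cy-neg-gaud}), so your treatment matches the paper's in scope. The reduction is also the same: pass to the Gauduchon representative of unit volume so that the Chern Laplacian integrates to zero, derive the conformal transformation law, identify the sign of the constant $\lambda$ with the sign of the Gauduchon degree, and solve the linear equation when that degree vanishes. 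The one genuine methodological difference is in the negative case: after the same preliminary conformal change making $S^{Ch}$ pointwise negative, you invoke monotone iteration between constant sub- and supersolutions, whereas the paper runs a continuity path $\mathrm{ChYa}(t,f)=0$ with openness from the implicit function theorem and closedness from a priori $L^\infty$ bounds via the maximum principle. Both are standard and both work for the non-self-adjoint operator $\Delta^{Ch}=\Delta_d+(d\,\cdot\,,\theta)_\omega$, since each relies only on the maximum principle and Schauder theory; the barrier argument is arguably shorter, while the continuity/implicit-function framework generalizes more readily to perturbative situations (compare Theorem \ref{thm:small-lambda}). Two further remarks: your Fredholm argument for surjectivity of $\Delta^{Ch}$ onto mean-zero functions (index zero plus one-dimensional kernel) is a clean substitute for the paper's direct computation that $\ker(\Delta^{Ch})^*$ consists exactly of the constants; and for the positive case your proposed Moser--Trudinger route faces the additional obstruction, recorded in Proposition \ref{prop:variational-form}, that the equation is an Euler--Lagrange equation for the standard $L^2$ pairing only when the conformal class admits a balanced representative, and the paper moreover shows (Proposition \ref{prop:non-uniq-positive}) that uniqueness genuinely fails in the positive regime.
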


By noting that, under conformal transformations, the Chern scalar curvature changes as
$$ S^{Ch} \left( \exp(2f/n) \omega \right) = \exp(-2f/n) \left( S^{Ch} \left(  \omega \right) + \Delta^{Ch}_\omega f \right), $$
where $\Delta^{Ch}_\omega$ denotes the Chern Laplacian with respect to $\omega$, that is,  $$\Delta^{Ch}_\omega f := \left( \omega ,\, dd^c f \right)_\omega = 2i\mathrm{tr}_\omega \overline\partial\partial f=\Delta_d f + (df,\, \theta )_\omega,$$
where $\theta=\theta(\omega)$ is the torsion $1$-form defined by $d\omega^{n-1} = \theta\wedge\omega^{n-1}$, 
the above conjecture is reduced in finding a solution $\left( f, \lambda \right) \in \mathcal{C}^\infty(X;\R) \times \R$ of the Liouville-type equation, which generalizes the classical equation for the Uniformization Theorem,
\begin{equation}\tag{\ref{eq:cy}}
 \Delta^{Ch}f + S = \lambda \exp(2f/n) \;,
\end{equation}
where $S:=S^{Ch}(\omega)$ and $\Delta^{Ch}:=\Delta^{Ch}_\omega$. In this way, the metric $\exp(2f/n)\omega\in\{\omega\}$ has constant Chern scalar curvature equal to $\lambda$. For a nice discussion on a related type of equations see \cite{malchiodi}.

\medskip

As one can expect, the difficulty of the above problem can be related to the ``positivity'' of certain invariant concerning curvature. Thanks to the Gauduchon theorem in \cite{gauduchon-cras1977}, one can introduce the following quantity. Assume that $n=\dim_\C X\geq2$. Given $\{\omega\}\in\mathcal{C}^H_X$, let $\eta\in\{\omega\}$ be the unique Gauduchon representative of volume $1$. Define the {\em Gauduchon degree} as the invariant of the conformal class given by
$$ \Gamma_X(\{\omega\}) := \frac{1}{(n-1)!} \int_X c_1^{BC}(K_X^{-1})\wedge\eta^{n-1} = \int_X S^{Ch}(\eta) d\mu_\eta \;.$$

Under a suitable normalization of the conformal potential, we have that $\Gamma_X(\{\omega\})$ is indeed exactly equal to the value of the eventual constant Chern scalar curvature, see Proposition \ref{prop:gamma-sign}. Furthermore, its sign is related to other invariants of the complex structure. For example, thanks to \cite{gauduchon-cras1981}, positivity of the Kodaira dimension implies that the Gauduchon degree of any conformal class is negative, see Proposition \ref{prop:kod-gauddegree}. Note that the positivity of the Kodaira dimension is sufficient but not necessary for the eventual constant Chern scalar curvature being negative: e.g., consider the Inoue surfaces in class VII, compare \cite{teleman} and Example \ref{ex:inoue}.

\medskip

At present, we are able to solve the Chern-Yamabe conjecture at least for Hermitian conformal classes having non-positive Gauduchon degree. In the case of zero Gauduchon degree, this follows from the fact that the equation \eqref{eq:cy} reduces to an easy linear elliptic equation. In the case of negative Gauduchon degree, we apply a continuity method to get a solution, by an easy extension of the techniques used in the standard analytic proof of the Uniformization Theorem for surfaces of genus $g\geq 2$. More precisely, we get the following.

\renewcommand{\referenza}{\ref{thm:cy-zero-gaud} and Theorem \ref{thm:cy-neg-gaud}}
\begin{thm*}
 Let $X$ be a compact complex manifold and let $\{\omega\}\in \mathcal{C}^H_X$.
 If $\Gamma_X(\{\omega\}) \leq 0$, then
 $\mathcal{C}h\mathcal{Y}a(X,\{\omega\}) =\{p\}$. Moreover, the constant Chern scalar curvature metric is ``strongly'' unique, i.e., unique up to scalings.
\end{thm*}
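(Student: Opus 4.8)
The plan is to pass to a Gauduchon representative and then treat the subcases $\Gamma_X=0$ and $\Gamma_X<0$ separately. By Gauduchon's theorem I normalise $\{\omega\}$ to its Gauduchon representative of volume one, and write $S:=S^{Ch}(\omega)$ and $\Delta^{Ch}:=\Delta^{Ch}_\omega$; the condition $\p\overline\p\,\omega^{n-1}=0$ is equivalent to $d^*\theta=0$, which is all I shall use about $\omega$. Two consequences of integration by parts enter throughout: $\int_X\Delta^{Ch}f\,d\mu=0$ for every $f$, and the energy identity $\int_X f\,\Delta^{Ch}f\,d\mu=\int_X|df|^2\,d\mu$. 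Integrating \eqref{eq:cy} against $d\mu$ forces $\lambda\int_X\exp(2f/n)\,d\mu=\int_X S\,d\mu=\Gamma_X$, so $\sgn\lambda=\sgn\Gamma_X\le 0$ (cf.\ Proposition \ref{prop:gamma-sign}). When $\Gamma_X=0$ one has $\lambda=0$ and \eqref{eq:cy} collapses to the linear equation $\Delta^{Ch}f=-S$; since $\Delta^{Ch}$ is elliptic with kernel and cokernel both equal to the constants (by $d^*\theta=0$ and the maximum principle), its image is $\{g:\int_X g\,d\mu=0\}$, which contains $-S$ because $\int_X S\,d\mu=\Gamma_X=0$. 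This yields $f$ uniquely up to an additive constant, i.e.\ up to scaling of the metric, settling this case.

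For $\Gamma_X<0$ I fix $\lambda=-1$ by a scaling and seek $f$ with $\Delta^{Ch}f+S=-\exp(2f/n)$, via a continuity method along $S_t:=\Gamma_X+t(S-\Gamma_X)$, $t\in[0,1]$, solving the equation $(\ast_t)$ given by $\Delta^{Ch}f_t+S_t=-\exp(2f_t/n)$; at $t=0$ the constant $f_0=\tfrac n2\log(-\Gamma_X)$ solves $(\ast_0)$, and $\int_X S_t\,d\mu=\Gamma_X$ for every $t$. Openness of the set of solvable parameters follows from the implicit function theorem once I check that the linearisation $L\phi=\Delta^{Ch}\phi+\tfrac2n\exp(2f_t/n)\phi$ is invertible: testing $L\phi=0$ at a maximum and at a minimum of $\phi$ and using $\exp(2f_t/n)>0$ gives $\ker L=0$, and, being a lower-order perturbation of the Hodge Laplacian, $L$ is a second-order elliptic operator of index zero, hence an isomorphism.

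The heart of the matter is closedness, which amounts to uniform $C^0$ bounds on $f_t$; from these, elliptic regularity applied to $\Delta^{Ch}f_t=-\exp(2f_t/n)-S_t$ bootstraps to uniform $C^\infty$ bounds and produces a solution at the limiting parameter. The upper bound is immediate: at a maximum $\Delta^{Ch}f_t\ge 0$, whence $\exp(2\max f_t/n)\le-\min_X S_t$, which is finite and positive since $\int_X S_t\,d\mu=\Gamma_X<0$. The lower bound is the main obstacle, because at a minimum the maximum principle gives nothing when $S_t>0$ there; I will instead use integral estimates. Multiplying $(\ast_t)$ by $f_t$, integrating, invoking the energy identity together with the elementary inequality $s\exp(2s/n)\ge-\tfrac n{2e}$, then splitting $f_t=\bar f_t+\psi_t$ with $\int_X\psi_t\,d\mu=0$ and applying Poincaré to $\psi_t$, I expect an estimate of the shape $\tfrac12\|df_t\|_{L^2}^2\le C+|\Gamma_X|\,\bar f_t$. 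As the left side is non-negative this bounds $\bar f_t$ from below, while Jensen's inequality applied to $\int_X\exp(2f_t/n)\,d\mu=-\Gamma_X$ bounds it from above, and then $\|df_t\|_{L^2}$ is controlled too. Finally, writing $v_t:=C_+-f_t\ge 0$ for the upper bound constant $C_+$, one has $\Delta^{Ch}v_t=\exp(2f_t/n)+S_t$ uniformly bounded in $C^0$, so representing $v_t-\bar v_t$ through the Green's function of $\Delta^{Ch}$ (whose singularity is integrable, the principal part being a Laplacian) upgrades the now-controlled average of $v_t$ to a uniform sup bound, delivering the pointwise lower bound on $f_t$. I regard this lower bound as the single genuinely hard step.

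Uniqueness is again a maximum-principle argument: if $f_1,f_2$ both solve the normalised equation and $w:=f_1-f_2$, then $\Delta^{Ch}w=\exp(2f_2/n)-\exp(2f_1/n)$, and evaluating at a maximum and a minimum of $w$, together with the monotonicity of the exponential, forces $\max w\le 0\le\min w$, hence $w\equiv 0$. Thus for each fixed value of $\lambda$ the solution is unique, i.e.\ the constant-Chern-scalar-curvature metric is unique up to scaling; combined with existence this is the asserted strong uniqueness and a fortiori gives $\mathcal{C}h\mathcal{Y}a(X,\{\omega\})=\{p\}$, the case $\Gamma_X=0$ having already been disposed of in the first paragraph.
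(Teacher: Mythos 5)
Your argument is correct, and its skeleton --- Gauduchon normalisation, the linear equation for $\Gamma_X(\{\omega\})=0$, a continuity method for $\Gamma_X(\{\omega\})<0$ (your path $S_t=\Gamma_X+t(S-\Gamma_X)$ is, after your scaling $\lambda=-1$, the same path as the paper's $tS+\lambda(1-t)$ with $\lambda=\Gamma_X(\{\omega\})$), and the maximum-principle uniqueness --- coincides with the paper's. The one genuine divergence is the a priori lower bound in the closedness step, which you rightly identify as the hard point. The paper disposes of it by a \emph{preliminary conformal gauge-fixing}: solving the linear equation $\Delta^{Ch}_\eta f=-S^{Ch}(\eta)+\Gamma_X(\{\omega\})$ produces a reference representative with $S^{Ch}=\exp(-2f/n)\,\Gamma_X(\{\omega\})<0$ at \emph{every point}; since then $tS^{Ch}(\omega)+\lambda(1-t)$ is pointwise negative for all $t\in[0,1]$, evaluating the equation at a minimum point $q$ immediately gives $-\lambda\exp(2f_t(q)/n)\ge\min\{\min_X(-S^{Ch}(\omega)),\,-\lambda\}>0$, and the entire $L^\infty$ estimate becomes a two-line maximum-principle computation. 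Your route --- the energy identity, the pointwise bound $s\exp(2s/n)\ge -n/(2e)$, Poincar\'e, Jensen for the mean, and a Green's-function (or $L^p$-elliptic) bound on the oscillation --- is sound: the identity $\int_X f\Delta^{Ch}f\,d\mu=\int_X|df|^2\,d\mu$ and the existence of a Green's function with integrable singularity both rely only on $d^*\theta=0$, which you have arranged. But it is considerably heavier than necessary, and the lesson worth retaining from the paper's proof is that one extra linear solve at the outset converts the troublesome sign of $S$ into a non-issue, so that the maximum principle controls both the supremum and the infimum along the whole path.
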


In particular, thanks to the relations between the Gauduchon degrees and the Kodaira dimension, we have the following corollary.

\renewcommand{\referenza}{\ref{cor:kod-non-negative}}
\begin{cor*}
The Chern-Yamabe Conjecture \ref{conj:cy} holds for any compact complex manifold $X$ having $\mathrm{Kod}(X)\geq 0$. 
\end{cor*}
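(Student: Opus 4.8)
The plan is to obtain the Corollary as an immediate consequence of the main Theorem (Theorems~\ref{thm:cy-zero-gaud} and~\ref{thm:cy-neg-gaud}), whose only hypothesis is the non-positivity of the Gauduchon degree. Thus the whole task reduces to checking one implication: if $\mathrm{Kod}(X)\geq0$, then $\Gamma_X(\{\omega\})\leq0$ for \emph{every} conformal class $\{\omega\}\in\mathcal{C}^H_X$. Granting this, each such $\{\omega\}$ lies in the regime $\Gamma_X(\{\omega\})\leq0$ covered by the Theorem, which produces a representative of constant Chern scalar curvature; in particular $\mathcal{C}h\mathcal{Y}a(X,\{\omega\})\neq\varnothing$, which is exactly the Chern-Yamabe Conjecture for $X$. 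This implication is the content I expect from Proposition~\ref{prop:kod-gauddegree}, so in the final write-up one may simply cite it; below I sketch how I would prove it directly.

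I would fix $\{\omega\}$ and let $\eta\in\{\omega\}$ be its Gauduchon representative of volume $1$. For a holomorphic line bundle $L$ set $\deg_\eta(L):=\int_X c_1^{BC}(L)\wedge\eta^{n-1}$; this is well defined on the Bott-Chern class precisely because the Gauduchon condition $\partial\overline\partial\,\eta^{n-1}=0$ annihilates any $\partial\overline\partial$-exact ambiguity after integration. The key elementary fact is that this degree is non-negative on \emph{effective} bundles. Indeed, if $0\neq s\in H^0(X,L)$ and $h$ is any Hermitian metric on $L$, the Poincaré-Lelong formula represents $c_1^{BC}(L)$ by $[Z_s]-\tfrac{i}{2\pi}\partial\overline\partial\log|s|^2_h$, where $[Z_s]$ is the positive current of integration along the zero divisor of $s$. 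Wedging with $\eta^{n-1}$ and integrating, the second term vanishes by the Gauduchon property (integrate by parts and use $\partial\overline\partial\,\eta^{n-1}=0$), while the first gives $\int_{Z_s}\eta^{n-1}\geq0$, since $\eta^{n-1}$ is a positive $(n-1,n-1)$-form. Hence $\deg_\eta(L)\geq0$ whenever $H^0(X,L)\neq0$.

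Now suppose $\mathrm{Kod}(X)\geq0$. By definition of Kodaira dimension there is $m\geq1$ with $H^0(X,K_X^{\otimes m})\neq0$, so the previous step applied to $L=K_X^{\otimes m}$ gives $m\,\deg_\eta(K_X)=\deg_\eta(K_X^{\otimes m})\geq0$, whence $\deg_\eta(K_X)\geq0$ and therefore $\deg_\eta(K_X^{-1})=-\deg_\eta(K_X)\leq0$. Recalling $\Gamma_X(\{\omega\})=\tfrac{1}{(n-1)!}\deg_\eta(K_X^{-1})$, I conclude $\Gamma_X(\{\omega\})\leq0$, as wanted. Note that $\mathrm{Kod}(X)=0$ may already force $\Gamma_X(\{\omega\})=0$, e.g.\ when some pluricanonical section is nowhere vanishing, which is why the zero-degree case of the Theorem is genuinely needed alongside the negative one.

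The only point requiring care is the sign statement $\deg_\eta(L)\geq0$ for effective $L$: it is exactly here that the Gauduchon condition is used, to discard the global $\partial\overline\partial\log|s|^2_h$ term, together with the standard (but mildly delicate) justification of the integration by parts across the singularities of $\log|s|^2_h$ along $Z_s$ in the sense of currents. Everything else is formal bookkeeping. A minor caveat concerns dimension: the Gauduchon degree, and hence this argument, presupposes $n=\dim_\C X\geq2$; the remaining case $n=1$ amounts to the classical uniformization of compact Riemann surfaces of genus $\geq1$ (precisely those with $\mathrm{Kod}\geq0$) and is handled separately.
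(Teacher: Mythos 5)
Your proposal is correct and follows essentially the same route as the paper, which obtains the corollary in one line by combining Theorems~\ref{thm:cy-zero-gaud} and~\ref{thm:cy-neg-gaud} with Proposition~\ref{prop:kod-gauddegree}. Your Poincar\'e--Lelong sketch of the implication $\mathrm{Kod}(X)\geq 0\Rightarrow\Gamma_X(\{\omega\})\leq 0$ is precisely the content of that proposition (which the paper attributes to Gauduchon's result that the degree of an effective line bundle with respect to a Gauduchon metric equals the volume of the associated divisor), so it is a welcome filling-in of a cited fact rather than a different argument.
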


Not too surprisingly, the study of the space $\mathcal{C}h\mathcal{Y}a(X,\{\omega\})$ in the positive case, even its eventually being non-empty, seems to definitely need more advanced analytical tools and geometric considerations.
We hope to treat in details this missing case in subsequent work. Here we just point out some easy observations.

In fact, in some cases as well as in some examples, it is definitely possible to have (non-K\"ahler) metrics with positive constant Chern scalar curvature, at least in some Hermitian conformal classes. For example, we can prove the following.

\renewcommand{\referenza}{\ref {prop:pos-product-curve}}
\begin{prop*}
Let $X$ be a compact complex manifold of complex dimension $\dim_\C X \geq2$ endowed with a Hermitian conformal class $\{\omega\}$ with $\Gamma_X(\{\omega\})>0$. Then $X\times \Sigma_g$, with $\Sigma_g$ compact complex curve of genus $g\geq2$, admits  Hermitian conformal classes with metrics having positive constant Chern scalar curvature.
\end{prop*}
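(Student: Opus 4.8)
The plan is to produce, on $X\times\Sigma_g$, a one--parameter family of product conformal classes whose Gauduchon degree crosses zero, and then to obtain a metric with small positive constant Chern scalar curvature by perturbing the solution of the (linear) zero--degree case. Write $n:=\dim_\C X$, so $\dim_\C(X\times\Sigma_g)=n+1$. Fix the Gauduchon representative $\omega_X\in\{\omega\}$ of volume $1$, so that $\int_X S^{Ch}(\omega_X)\,d\mu_{\omega_X}=\Gamma_X(\{\omega\})>0$, and let $\omega_\Sigma$ be a hyperbolic metric on $\Sigma_g$, which exists since $g\geq2$ and has constant negative Chern scalar curvature. For $t>0$ set $\Omega_t:=\omega_X+t\,\omega_\Sigma$. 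Since every metric on a curve is K\"ahler, a short computation gives $\Omega_t^{\,n}=\omega_X^{\,n}+n\,t\,\omega_X^{\,n-1}\wedge\omega_\Sigma$ and hence $\partial\overline\partial\,\Omega_t^{\,n}=n\,t\,(\partial\overline\partial\,\omega_X^{\,n-1})\wedge\omega_\Sigma=0$; thus each $\Omega_t$ is, up to scale, the Gauduchon representative of its own conformal class.

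First I would compute the Gauduchon degree $\Gamma(t):=\Gamma_{X\times\Sigma_g}(\{\Omega_t\})$ as a function of $t$. Using $c_1^{BC}(K_{X\times\Sigma_g}^{-1})=\pi_X^*c_1^{BC}(K_X^{-1})+\pi_\Sigma^*c_1^{BC}(K_{\Sigma_g}^{-1})$ together with the expansion of $\Omega_t^{\,n}$, only two wedge terms survive integration, so that, up to a positive normalization factor,
\begin{equation*}
 \Gamma(t)\ \propto\ n\,t\left(\int_X c_1^{BC}(K_X^{-1})\wedge\omega_X^{\,n-1}\right)\!\left(\int_{\Sigma_g}\omega_\Sigma\right)+\left(\int_X\omega_X^{\,n}\right)\!\left(\int_{\Sigma_g}c_1^{BC}(K_{\Sigma_g}^{-1})\right).
\end{equation*}
The first bracket is positive because $\Gamma_X(\{\omega\})>0$, whereas $\int_{\Sigma_g}c_1^{BC}(K_{\Sigma_g}^{-1})=\deg K_{\Sigma_g}^{-1}=2-2g<0$. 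Hence $\Gamma(t)$ is affine and strictly increasing, negative for small $t$, vanishing at a unique $t_0>0$, and strictly positive for every $t>t_0$.

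The core step is to solve the Chern--Yamabe equation for $t$ just above $t_0$. Since $\Gamma(t_0)=0$, the zero--degree case (Theorem \ref{thm:cy-zero-gaud}) provides a solution $\varphi_0$ of the linear equation $\Delta^{Ch}_{\Omega_{t_0}}\varphi_0=-S^{Ch}(\Omega_{t_0})$, normalized by $\int\varphi_0\,d\mu_{\Omega_{t_0}}=0$. I would then apply the implicit function theorem to $F(\varphi,\lambda,t)=\Delta^{Ch}_{\Omega_t}\varphi+S^{Ch}(\Omega_t)-\lambda\exp\!\big(2\varphi/(n+1)\big)$ at $(\varphi_0,0,t_0)$. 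As $\lambda=0$ there, the differential in $(\varphi,\lambda)$ is $(\dot\varphi,\dot\lambda)\mapsto\Delta^{Ch}_{\Omega_{t_0}}\dot\varphi-\dot\lambda\,\exp(2\varphi_0/(n+1))$; restricted to $\{\int\dot\varphi\,d\mu=0\}\times\R$ it is an isomorphism onto the target H\"older space, because the Gauduchon identity $\int\Delta^{Ch}\dot\varphi\,d\mu=0$ forces the image of the Laplacian to be the hyperplane of zero--mean functions, while the complementary direction is supplied by $\dot\lambda\,\exp(2\varphi_0/(n+1))$, whose integral is nonzero. This yields a smooth family $(\varphi_t,\lambda_t)$ of solutions for $t$ near $t_0$ with $\lambda_{t_0}=0$.

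Finally, integrating $F(\varphi_t,\lambda_t,t)=0$ against $d\mu_{\Omega_t}$ and using $\int\Delta^{Ch}\varphi_t\,d\mu_{\Omega_t}=0$ gives $\sgn\lambda_t=\sgn\Gamma(t)$ (in accordance with the normalization of Proposition \ref{prop:gamma-sign}), so $\lambda_t>0$ for $t>t_0$; the metric $\exp(2\varphi_t/(n+1))\,\Omega_t$ then realizes constant positive Chern scalar curvature. I expect the main obstacle to be the invertibility of the linearization at the threshold $t_0$, where the Gauduchon condition and the free scalar $\lambda$ must compensate the one--dimensional cokernel of $\Delta^{Ch}$; the genuinely hard large--$\lambda$ regime is sidestepped by working only slightly above $t_0$.
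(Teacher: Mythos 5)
Your proposal is correct and follows essentially the same route as the paper: form the product family $\omega_X+t\,\omega_\Sigma$ of Gauduchon metrics, observe that the Gauduchon degree is an increasing affine function of $t$ with negative constant term $\propto(2-2g)$ and positive slope $\propto\Gamma_X(\{\omega\})$, and perturb off the zero-degree threshold by the implicit function theorem (the paper packages this last step as a separate proposition, with the linearization handled exactly as in your argument). The only cosmetic difference is that the paper computes the sign of the degree via Fubini and Gauss--Bonnet on the scalar-curvature integral rather than via the $c_1^{BC}$ wedge expansion, which is equivalent.
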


Another fact to notice is that, if we restrict to conformal classes admitting a {\em balanced} representative, (that is, a Hermitian metric $\eta$ such that $d\eta^{n-1}=0$, where $n$ denotes the complex dimension of the manifold, \cite{michelsohn},) then the equation \eqref{conj:cy} is an Euler-Lagrange equation for a natural functional. In fact, this assumption is necessary in order to let the equation be the Euler-Lagrange equation with multiplier for the standard $L^2$ pairing, see Proposition \ref{prop:variational-form}. The functional is:
\begin{equation}\tag{\ref{eq:el-functional}}
\mathcal{F}(f) := \frac 1 2 \int_X |d f|_\eta^2 d\mu_\eta + \int_X S^{Ch}(\eta) f d\mu_\eta \;,
\end{equation}
with constraint
\begin{equation}\tag{\ref{eq:el-constraint}}
\int_X \exp(2f/n)d\mu_\eta=1\;.
\end{equation}
(Note that the constraint is not of having unitary volume.)

A possible strategy to attack the existence problem may be via the Chern-Yamabe flow, see Definition \ref{CYF}. However, we remark that the analytical challenging of Chern-Yamabe problem in positive Gauduchon degrees may suggests that the naive Chern-Yamabe conjecture is not always true in this case. If this happens, it will be still interesting  to understand what are the possible obstructions and to find some situations when a solution, or many solutions, actually exist(s), maybe giving interesting moduli spaces.  
Regarding this last point, we notice that, in general, solutions to the Chern-Yamabe problem are not unique in case of positive Gauduchon degree.

\renewcommand{\referenza}{\ref{prop:non-uniq-positive}}
\begin{prop*}
 In general, on a compact complex manifold $X$, constant Chern scalar curvature metrics in a fixed conformal class $\{\omega\}$ with positive Gauduchon degree are not unique, even up to the action of $\mathcal{G}_X(\{\omega\})$.
\end{prop*}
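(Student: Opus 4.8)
The plan is to exhibit a single explicit conformal class of positive Gauduchon degree carrying at least two constant Chern scalar curvature metrics that lie in distinct $\mathcal{G}_X$-orbits; since the statement only asserts failure of uniqueness \emph{in general}, one well-chosen example suffices. I would take the product $X = \P^1 \times T$, where $T = \C/\Lambda$ is a complex torus, equipped with the product Hermitian metric $\eta = \eta_{FS} \oplus \eta_T$, with $\eta_{FS}$ the Fubini--Study metric and $\eta_T$ a flat metric. Both factors being K\"ahler, $\eta$ is K\"ahler, hence Gauduchon, and its torsion $1$-form vanishes; moreover the Chern scalar curvature is additive on products, so $S^{Ch}(\eta) = S^{Ch}(\eta_{FS}) + S^{Ch}(\eta_T) = c$ is a positive constant (the flat factor contributes nothing). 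In particular $\Gamma_X(\{\eta\}) > 0$ and we are genuinely in the positive case.

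Next I would reduce the Chern--Yamabe equation \eqref{eq:cy} to an ordinary differential equation. Restricting to conformal potentials $f$ pulled back from $T$, the vanishing of the torsion gives $\Delta^{Ch}_\eta f = \Delta_d f = \Delta_T f$, the Laplacian of the flat factor, so with $n = 2$ the equation becomes $\Delta_T f + c = \lambda \exp(f)$ on $T$. Writing $T = (\R/a\Z) \times (\R/b\Z)$ and seeking $f = f(x)$ depending on a single real coordinate, this is the Newton-type equation $f'' = \lambda \exp(f) - c = -V'(f)$ with convex potential $V(f) = \lambda\exp(f) - cf$. The phase portrait has a unique center at $f_\ast = \ln(c/\lambda)$, whose small oscillations have period $2\pi/\sqrt{c}$, surrounded by a one-parameter family of closed orbits whose period tends to $+\infty$ as the energy grows. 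Consequently, for a suitable torus there is a non-constant $a$-periodic solution in addition to the constant one $f \equiv f_\ast$, and both produce Hermitian metrics in $\{\eta\}$ with constant Chern scalar curvature $\lambda > 0$.

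It remains to check that these two solutions are not identified by the gauge group. The constant solution yields (a scaling of) the product metric, which is invariant under the full translation group of $T$, whereas the non-constant solution produces a metric whose conformal factor genuinely varies along $T$. Since every element of $\mathcal{H}\mathrm{Conf}(X,\{\eta\})$ is built from isometries of $(\P^1,\eta_{FS})$ and affine automorphisms of $T$ — the non-unitary part of $\mathrm{Aut}(\P^1)$ failing to preserve $\{\eta_{FS}\}$ — no gauge transformation can carry a $T$-inhomogeneous metric to a $T$-homogeneous one inside the conformal class; scaling preserves homogeneity as well. Hence the two metrics represent distinct points of $\mathcal{C}h\mathcal{Y}a(X,\{\eta\})$, proving non-uniqueness.

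The main obstacle is the analysis of the period function of the pendulum-type ODE: one must verify that the periods of the closed orbits range over an interval reaching $+\infty$ (equivalently, attain arbitrarily large values), so that some orbit closes up after exactly one turn around the torus. This is where the asymmetry of the potential $V$ — exponential on one side, asymptotically linear on the other, forcing the left turning point to $-\infty$ and the period to diverge — is essential. A secondary, more bookkeeping-type difficulty is the gauge-inequivalence step, which requires controlling the biholomorphic conformal automorphism group of $\P^1 \times T$ precisely enough to exclude an accidental identification; alternatively, one can bypass the ODE reduction and quote the known multiplicity of solutions of the mean-field equation $\Delta_T f = \rho\left(\frac{\exp(f)}{\int_T \exp(f)\, d\mu_T} - \frac{1}{\mathrm{Vol}(T)}\right)$ on flat tori for $\rho$ large.
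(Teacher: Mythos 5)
Your approach is correct in substance but genuinely different from the paper's. The paper works on $\C\P^1\times\C\P^1\times\C\P^1$ with the family $\omega_\lambda$ of scaled product metrics and invokes bifurcation theory (Westreich's theorem on eigenvalues of odd multiplicity): at $\lambda_0=1/4$ the linearization $\Delta_{\omega_\lambda}-\tfrac{2}{3}S(\lambda)$ has a $33$-dimensional, hence odd-dimensional, kernel, and a transversality computation produces a non-trivial branch of solutions near $\omega_{\lambda_0}$, not gauge-equivalent to the homogeneous one. Your route instead reduces the problem on $\C\P^1\times T$ to a one-dimensional pendulum-type ODE and produces a completely explicit non-constant solution; this is more elementary, avoids functional-analytic bifurcation machinery, and yields a concrete second metric, whereas the paper's method is less explicit but illustrates a technique (odd crossing numbers) that gives local information about the structure of the solution set along a family of conformal classes. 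Your gauge-inequivalence step is fine and in fact simpler than you make it: by Lemma \ref{lem:CGG} the gauge group acts on conformal potentials by $f\mapsto f\circ\phi+\mathrm{const}$, so it preserves the property of the potential being constant, and no appeal to the structure of $\mathrm{Aut}(\C\P^1\times T)$ beyond this is needed.

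One point you must fix: the sign of the reduced ODE. With the paper's convention $\Delta_d=d^*d$ (so $\Delta_d=-\partial_x^2$ on the flat factor), the equation $\Delta_T f+c=\lambda e^{f}$ becomes $f''=c-\lambda e^{f}=-V'(f)$ with $V(f)=\lambda e^{f}-cf$ convex, which is the equation your phase-plane analysis actually treats (center at $f_*=\ln(c/\lambda)$, closed orbits, period tending to $+\infty$). The equation as you display it, $f''=\lambda e^{f}-c$, is not equal to $-V'(f)$ and, taken literally, admits only the constant periodic solution by the maximum principle (at a maximum $f''\leq 0$ forces $f\leq f_*$, at a minimum $f\geq f_*$) — exactly the mechanism behind uniqueness in the negative case. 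Since your subsequent analysis is of the correct equation $f''=-V'(f)$, this is a recoverable sign slip rather than a gap, but as written the displayed ODE contradicts the conclusion you draw from it, so it needs to be corrected.
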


In other words, $\mathcal{C}h\mathcal{Y}a(X,\{\omega\}) \supsetneq \{p\}$.
This follows by the bifurcation techniques via odd crossing numbers, see \cite[Theorem A]{westreich}.
(See also \cite[II.4]{kielhofer}. Compare also \cite{dLPZ}, founded on \cite{smoller-wasserman}: there, bifurcation theory is used to study multiplicity of solutions for the classical Yamabe problem.) Non-uniqueness phenomena already happen on $\C\P^1 \times\C\P^1\times\C\P^1$ in the natural conformal classes given by suitably scaled products of the round metrics.

\medskip

The structure of this note is as follows. In Section \ref{sec:prelim}, we recall the relevant definitions. In Section \ref{sec:cy}, we define precisely the Chern-Yamabe problem and we show some of the basic facts related to it. Sections \ref{sec:zero-gaud} and \ref{sec:neg-gaud} are devoted to the solution of the problem in case of Gauduchon degree being zero or negative respectively. In the final Section \ref{sec:pos-gaud}, we discuss some remarks in view of the study of the case of positive Gauduchon degree. For readers' convenience, we have included Appendix \ref{app:formulas} containing some formulas for the Chern Laplacian and the Chern scalar curvature.

\bigskip

\noindent{\sl Acknowledgments.}
The authors are grateful to Prof. X.~X. Chen, Prof. P. Gauduchon, Prof. A. Malchiodi, and Prof. P. Piccione for useful conversations, suggestions, and for their interest on the subject.
Part of this work has been developed during the 2014 SMI summer course by Paul Gauduchon in Cortona: the authors are grateful to the Professor, to the participants, and to the organizers of the school for the exceptional environment they contributed to.
Many thanks are also due to M. Zedda, D. Petrecca, S. Sun, P. Baroni, L. Cremaschi, K. Zheng for several discussions.

\section{Preliminaries and notation}\label{sec:prelim}
Let $X$ be a compact complex manifold of complex dimension $n$. We will usually confuse Hermitian metrics $h$ and their associated $(1,1)$-forms $\omega:=h(J\sspace,\ssspace)$. Given a Hermitian metric $\omega$, its volume form will be denoted by $d\mu_\omega$, and its total volume by $\mathrm{Vol}_\omega:=\int_X d\mu_\omega$.

The Hodge-de Rham Laplacian associated to $\omega$ will be denoted by $\Delta_{d,\omega}$. (Hereafter, the reference to the metric will not be specified when it is clear.) We assume the following sign convention: whenever $u$ is a smooth 
function on $X$ with real values, then at a point $p$ where $u$ attains its local maximum there holds $\left( \Delta_d u \right)(p) \geq 0$.
In these notation, $\Delta_d u = d^* d u$ on smooth functions $u$, where $d^*$ denotes the adjoint operator of $d$ with respect to $\omega$.
In particular, integration by part is written as
$$ \int_X  u \Delta_d v d\mu_\omega = \int_X (du , \, dv )_\omega d\mu_\omega \; ,$$
for $u,v$ smooth functions on $X$.

Once fixed a Hermitian structure $(J,\omega)$ on $X$, we denote by $\nabla^{Ch}$ the {\em Chern connection}, that is, the unique Hermitian connection on $T^{1,0}X$ (namely, $\nabla^{Ch}J=\nabla^{Ch} h =0$,) such that its part of type $(0,1)$ coincides with the Cauchy-Riemann operator $\overline\partial$ associated to the holomorphic structure; see, e.g., \cite{gauduchon-bumi}.
Denote by $T^{Ch}$ the torsion of $\nabla^{Ch}$.

\subsection{Special Hermitian metrics}
Suppose $n\geq2$. For a Hermitian metric $\omega$ on $X$, consider the operator
$$ L\colon \wedge^{\bullet}X\to\wedge^{\bullet+2}X \;, \qquad L:=\omega\wedge\sspace \;.$$
It satisfies that
$$ L^{n-1} \colon \wedge^1X \stackrel{\simeq}{\to} \wedge^{2n-1}X $$
is an isomorphism. In particular, there exists $\theta\in\wedge^1X$ such that
$$ d\omega^{n-1} = \theta\wedge\omega^{n-1} \;.$$
Such $\theta$ is called the {\em (balanced) Lee form}, or {\em torsion $1$-form}, associated to $\omega$. In terms of the torsion of the Chern connection, it can be given as follows, \cite[page 500]{gauduchon-mathann}. Take $\{ v_i \}_{i=1,\ldots, 2n }$ an $\omega$-orthonormal local frame of $TX$. Then
$$ \theta = \sum_{j=1}^{2n} g \left( T^{Ch}(\sspace , v_j ) ,\, v_j \right)\; .$$

The Hermitian metric $\omega$ is called {\em balanced} (in the sense of Michelsohn \cite{michelsohn}) if $\theta=0$, that is, $d\omega^{n-1}=0$. It is called {\em Gauduchon} (or {\em standard} in the notation of \cite{gauduchon-cras1977}) if $d^*\theta=0$. Note that the condition $d^*\theta=0$ is equivalent to $i\partial\overline\partial\omega^{n-1}=0$ (use that $\theta=\star_\omega d \omega^{n-1}$ where $\star_\omega=J*$ is the symplectic Hodge-$\star$-operator, see, e.g., \cite[\S1.10]{gauduchon-book}).

\subsection{Chern Laplacian} Fix a Hermitian metric $\omega$ on $X$, and consider the associated Chern Laplacian $\Delta^{Ch}:=\Delta^{Ch}_{\omega}$: on smooth functions $f$, it is defined as
$$ \Delta^{Ch} f = \left( \omega ,\, dd^c f \right)_\omega = 2i\mathrm{tr}_\omega\overline\partial\partial f \;, $$
or, in local holomorphic coordinates $\{z^j\}_j$ such that $\omega\stackrel{\text{loc}}{=}i h_{i\bar j}dz^i\wedge d\bar z^j$, as
$$ \Delta^{Ch} \stackrel{\text{loc}}{=} -2h^{i\bar j}\partial_i \partial_{\bar j} \;, $$
where $\left(h^{i\bar j}\right)_{i,j}$ denotes the inverse matrix of $\left(h_{i\bar j}\right)_{i,j}$.

Note that the Chern Laplacian is a Hopf operator, that is, a differential elliptic operator of $2$nd order without terms of order $0$. Its index is equal to the index of the Hodge-de Rham Laplacian. (See \cite[page 388]{gauduchon-cras1977}.)

As regards the comparison between the Hodge-de Rham Laplacian $\Delta_d$ and the Chern Laplacian $\Delta^{Ch}$ on smooth functions, the following holds (for the sake of completeness, we provide a proof in Appendix \ref{app:formulas}).

\begin{lem}[{\cite[pages 502--503]{gauduchon-mathann}}]\label{lem:chern-laplacian}
 Let $X$ be a compact complex manifold endowed with a Hermitian metric $\omega$ with Lee form $\theta$. The Chern Laplacian on smooth functions $f$ has the form
 $$ \Delta^{Ch}f = \Delta_d f + (df,\, \theta )_\omega \;. $$
\end{lem}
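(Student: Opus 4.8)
The plan is to rewrite both $\Delta_d f$ and $\Delta^{Ch} f$ as multiples of the volume form $d\mu_\omega=\frac{1}{n!}\omega^n$ and to show that the two top-degree forms differ exactly by $d^c f\wedge\theta\wedge\frac{\omega^{n-1}}{(n-1)!}$, an expression that is forced to appear as soon as one differentiates $\omega^{n-1}$ and invokes the defining relation $d\omega^{n-1}=\theta\wedge\omega^{n-1}$ of the Lee form. The two ingredients I would use are: (i) the elementary trace identity, valid for any real $(1,1)$-form $\alpha$, namely $\alpha\wedge\frac{\omega^{n-1}}{(n-1)!}=(\mathrm{tr}_\omega\alpha)\,d\mu_\omega$, which in view of the defining relation $\Delta^{Ch}f=\mathrm{tr}_\omega\,dd^c f$ gives $dd^c f\wedge\frac{\omega^{n-1}}{(n-1)!}=\Delta^{Ch}f\,d\mu_\omega$; and (ii) the Hermitian Hodge-star identity on $1$-forms, $*\,df=-\,d^c f\wedge\frac{\omega^{n-1}}{(n-1)!}$ (the sign being the one forced by the conventions in force), together with the even-dimensional relation $d^*=-*d*$, which on functions yields the pointwise equality $\Delta_d f\,d\mu_\omega=-\,d(*\,df)$.

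I would then proceed in order. First I would record the two identities above, checking the normalisation in (ii) by comparing leading terms, since its overall sign is the only thing the conventions leave ambiguous. Second, by the Leibniz rule and $d\omega^{n-1}=\theta\wedge\omega^{n-1}$,
$$ d\!\left(d^c f\wedge\tfrac{\omega^{n-1}}{(n-1)!}\right)= dd^c f\wedge\tfrac{\omega^{n-1}}{(n-1)!}-d^c f\wedge\theta\wedge\tfrac{\omega^{n-1}}{(n-1)!}. $$
Third, combining this with (i) and (ii) gives
$$ \Delta_d f\,d\mu_\omega=\Delta^{Ch}f\,d\mu_\omega-d^c f\wedge\theta\wedge\tfrac{\omega^{n-1}}{(n-1)!}. $$
Finally I would identify the last term with an inner product: using $\alpha\wedge *\beta=(\alpha,\beta)_\omega\,d\mu_\omega$ and the star identity (ii) once more, one finds $d^c f\wedge\theta\wedge\frac{\omega^{n-1}}{(n-1)!}=\theta\wedge *\,df=(df,\theta)_\omega\,d\mu_\omega$. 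Dividing through by the nowhere-vanishing $d\mu_\omega$ produces the desired pointwise identity $\Delta^{Ch}f=\Delta_d f+(df,\theta)_\omega$.

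The differential-form bookkeeping is routine; the genuinely delicate point is keeping all signs and normalisation constants consistent, because they hinge on the chosen conventions for $d^c$, for the Hodge star $*$, and for the orientation, and a single misplaced sign flips $(df,\theta)_\omega$ or the relative sign of the two Laplacians. I would pin these down once and for all through the sanity check that the leading (second-order) parts of $\Delta_d$ and $\Delta^{Ch}$ must agree, which fixes the sign in (ii). An alternative, more pedestrian route that avoids the Hermitian star identity is a direct computation in local holomorphic coordinates: expand $\Delta_d f=-\frac{1}{\sqrt{g}}\partial_\mu(\sqrt{g}\,g^{\mu\nu}\partial_\nu f)$ and $\Delta^{Ch}f=-2h^{i\bar j}\partial_i\partial_{\bar j}f$, observe that the second-order parts coincide so that the difference is first order, and recognise the first-order remainder as $(df,\theta)_\omega$ after expressing $\theta$ through the first derivatives of the metric; this trades the star-operator subtleties for a slightly longer but fully transparent coordinate computation.
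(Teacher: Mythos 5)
Your argument is correct, but it is genuinely different from the one in the paper. You work entirely at top degree: you convert both Laplacians into $2n$-forms via the trace identity $\alpha\wedge\frac{\omega^{n-1}}{(n-1)!}=(\mathrm{tr}_\omega\alpha)\,d\mu_\omega$ and the Hermitian star identity on $1$-forms, and then the Lee form enters in one stroke through the Leibniz rule applied to $d\bigl(d^cf\wedge\omega^{n-1}\bigr)$ together with the defining relation $d\omega^{n-1}=\theta\wedge\omega^{n-1}$. The paper instead proves the formula by a frame computation: it evaluates $(\omega,dd^cf)$ in a $g$-orthonormal frame $\{e_i,Je_i\}$, rewrites $\sum_a dd^cf(e_a,Je_a)$ in terms of $\Delta_d f$ and the divergence-type terms $\sum_i\nabla^{LC}_{v_i}v_i\cdot f$ and $\sum_i\nabla^{Ch}_{v_i}v_i\cdot f$ (using the $J$-anti-invariance of the Chern torsion and $\nabla^{Ch}J=0$), and finally identifies the first-order remainder with $(df,\theta)_\omega$ via the torsion description $\theta=\sum_j g(T^{Ch}(\cdot,v_j),v_j)$. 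Your route is shorter and uses only the $d\omega^{n-1}=\theta\wedge\omega^{n-1}$ description of $\theta$ (the one actually used throughout the main text), at the price of invoking the identity $*\,\alpha=\pm\,J\alpha\wedge\frac{\omega^{n-1}}{(n-1)!}$ for $1$-forms, which is standard Hermitian linear algebra but whose sign and normalization must be fixed carefully — your proposal to pin these down by matching the second-order symbols of the two Laplacians is the right sanity check, though you should either prove or precisely cite that star identity rather than leave it implicit. The paper's route is longer but self-contained at the level of connections; it does, however, silently rely on the equivalence of the two descriptions of $\theta$ (via $d\omega^{n-1}$ and via the Chern torsion), which your argument avoids. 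Your fallback coordinate computation would also work and is closest in spirit to what one finds in Gauduchon's original reference.
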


In particular, Chern Laplacian and Hodge-de Rham Laplacian on smooth functions coincide when $\omega$ is balanced.

\subsection{Hermitian conformal structures}
We will denote by $\mathcal{C}^H_X$ the {\em space of Hermitian conformal structures} on $X$.
Its elements are as follows: the Hermitian conformal class of $\omega$ will be denoted by
$$ \{\omega\} := \left\{ \exp({2f/n})\omega \;\middle|\; f\in\mathcal{C}^\infty(X;\R) \right\} \;. $$

We recall the foundational theorem by P. Gauduchon on the existence of standard metrics in Hermitian conformal classes.
\begin{thm}[{\cite[Théorème 1]{gauduchon-cras1977}}]
 Let $X$ be a compact complex manifold of complex dimension $\dim_\C X\geq2$, and fix a Hermitian conformal structure $\{\omega\}$.
 Then there exists a unique Gauduchon metric $\eta$ in $\{\omega\}$ such that $\int_X d\mu_\eta=1$.
\end{thm}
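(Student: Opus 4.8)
The plan is to convert the Gauduchon condition into a single scalar linear elliptic equation, produce a strictly \emph{positive} solution of that equation, and then fix the scale by the volume normalization.

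First I would parametrize the conformal class by positive functions. Writing a representative as $\omega' = \exp(2f/n)\,\omega$, we get ${\omega'}^{n-1} = \phi\,\omega^{n-1}$ with $\phi := \exp(2(n-1)f/n) > 0$, and the Gauduchon condition $i\partial\overline\partial\,{\omega'}^{n-1}=0$ becomes $i\partial\overline\partial(\phi\,\omega^{n-1}) = 0$. Since $\phi\,\omega^{n-1}$ is a real $(n-1,n-1)$-form, the form $i\partial\overline\partial(\phi\,\omega^{n-1})$ is a real top-degree form, so I may define a linear second-order operator $P\colon \mathcal{C}^\infty(X;\R)\to\mathcal{C}^\infty(X;\R)$ by $i\partial\overline\partial(\phi\,\omega^{n-1}) =: P(\phi)\,d\mu_\omega$. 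The map $\phi\mapsto\omega'$ (with $f=\tfrac{n}{2(n-1)}\log\phi$) is a bijection between positive functions and metrics of $\{\omega\}$, so the theorem reduces to finding $\phi>0$ with $P\phi=0$ and then normalizing.

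Next I would record the structural features of $P$. Its principal symbol agrees, up to a positive constant, with that of the trace operator $\mathrm{tr}_\omega\, i\partial\overline\partial(\cdot)$, hence with that of the Chern Laplacian $\Delta^{Ch}$; thus $P$ is elliptic and, sharing the symbol of $\Delta^{Ch}$, has index $0$ (equal to the index of $\Delta_d$). Moreover, since $i\partial\overline\partial(\phi\,\omega^{n-1}) = i\,d\bigl(\overline\partial(\phi\,\omega^{n-1})\bigr)$ is exact, Stokes' theorem gives $\int_X P(\phi)\,d\mu_\omega = 0$ for every $\phi$. Testing against the constant $1$, this says the formal $L^2$-adjoint satisfies $P^*(1)=0$; equivalently, $P^*$ is a second-order elliptic operator with \emph{no} zeroth-order term (whereas $P$ itself carries the nonzero zeroth-order coefficient $P(1) = i\partial\overline\partial\omega^{n-1}/d\mu_\omega$).

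The crux, and the step I expect to be the main obstacle, is extracting a positive element of $\ker P$: ellipticity and index $0$ only give $\dim\ker P=\dim\ker P^*$, and one-dimensionality alone does not produce a sign-definite generator. Here positivity must be injected through the maximum principle. Applying the strong maximum principle to $P^*$ on the connected compact manifold $X$ shows $\ker P^*=\R\cdot 1$, so $0$ is the principal eigenvalue of $P^*$, simple and with positive eigenfunction $1$. By the Krein--Rutman theorem applied to the (compact, positivity-improving) resolvent $(\sigma\,\mathrm{Id}-P)^{-1}$ for $\sigma$ a large constant, the principal eigenvalue of $P$ is real, simple, and carries a positive eigenfunction; and since $P$ and $P^*$ share this principal eigenvalue, it equals $0$. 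Hence there is $\phi_0>0$ with $P\phi_0=0$ and $\ker P=\R\cdot\phi_0$. Finally, for the normalization I would use $d\mu_{\omega'} = \phi^{n/(n-1)}\,d\mu_\omega$, so that writing $\phi=\lambda\phi_0$ gives $\int_X d\mu_{\omega'} = \lambda^{n/(n-1)}\int_X \phi_0^{n/(n-1)}\,d\mu_\omega$, a strictly increasing function of $\lambda>0$ with range $(0,\infty)$; thus exactly one $\lambda>0$ yields unit volume, proving both existence and uniqueness of the Gauduchon representative $\eta$.
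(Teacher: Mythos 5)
The paper does not actually prove this statement: it is Gauduchon's theorem, quoted with a citation to \cite{gauduchon-cras1977}, so there is no internal proof to compare against. Your reconstruction is correct and is, in substance, Gauduchon's own argument. The reduction to the linear equation $i\partial\overline\partial(\phi\,\omega^{n-1})=0$ for $\phi=\exp(2(n-1)f/n)>0$, the identification of the formal adjoint $P^*$ as (a constant multiple of) $\mathrm{tr}_\omega\, i\partial\overline\partial$ --- precisely the ``Hopf operator'' the paper invokes in its preliminaries: elliptic, second order, no zeroth-order term, index equal to that of $\Delta_d$, hence $\ker P^*=\R\cdot 1$ by the strong maximum principle and $\dim\ker P=1$ by index zero --- and the recognition that the real content is the sign-definiteness of the generator of $\ker P$, all match the source. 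Your handling of that crux is sound: for $\sigma$ large the resolvent $(\sigma\,\mathrm{Id}-P)^{-1}$ is compact on $\mathcal{C}^0(X)$ and positivity-improving by the maximum principle, so Krein--Rutman gives a simple principal eigenvalue of $P$ with strictly positive eigenfunction; since the principal eigenvalues of $P$ and $P^*$ agree and $0$ is the principal eigenvalue of $P^*$ (its eigenfunction $1$ being positive), the generator of $\ker P$ may be taken strictly positive, with smoothness by elliptic regularity. The normalization step via $d\mu_{\omega'}=\phi^{n/(n-1)}d\mu_\omega$ and strict monotonicity in the scaling parameter correctly yields both existence and uniqueness of the unit-volume Gauduchon representative. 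The only point deserving an extra line is the passage from ``nonnegative principal eigenvector'' to ``strictly positive'' (strong maximum principle or Harnack), which your phrase ``positivity-improving'' already implicitly covers.
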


We will consider the following normalization: denote by $\eta\in\{\omega\}$ the unique Gauduchon representative of volume $1$, and set
$$ \{\omega\}_1 := \left\{ \exp(2f/n)\eta\in\{\omega\} \;\middle|\; \int_X \exp(2f/n)d\mu_\eta=1 \right\} \subset \{\omega\} \;. $$

\subsection{Underlying Gauge group}
The relevant {\em gauge group} to consider in our problem is the finite dimensional group of \emph{holomorphic conformal maps} and \emph{scalings}, that is,
$$ \mathcal{G}_X(\{\omega\}) := \mathcal{H}\mathrm{Conf}(X,\{\omega\}) \times \R^+ \subseteq \mathrm{Aut}(X)\times \R^+ \subseteq \mathrm{Diff}^+(X)  \times \R^+ \;, $$
where
$$ \mathcal{H}\mathrm{Conf}(X,\{\omega\}) := \left\{\phi \in \mathrm{Aut}(X) \; \middle| \; \phi^\ast \omega \in \{\omega\} \right\} \;. $$
The natural action of $\mathcal{G}_{X}(\{\omega\})$ on the conformal class $\{\omega\}$ is
$$(\phi, \lambda) . \left(\exp(2f/n)\omega\right) := \lambda \cdot \exp(2f\circ\phi/n)\phi^\ast \omega \;. $$

\medskip

We prove that, if $X \ncong \C\P^1$, then $\mathcal{H}\mathrm{Conf}(X,\{\omega\})$ is a compact Lie group in $\mathrm{Aut}(X)$, as a consequence of the uniqueness of the Gauduchon representative with volume normalization.
Thanks to this lemma, it follows also that, after fixing the Gauduchon representative $\eta$ of volume $1$ as reference point in the conformal class, the action of $\mathcal{H}\mathrm{Conf}(X,\{\omega\})\cong\mathcal{H}\mathrm{Isom}(X,\eta)$ on $\{\omega\}=\{\eta\}$ is just given by $\phi.\exp(2f/n)\eta=\exp\left(2f\circ\phi/n\right)\eta$. In particular, the subset $\{\omega\}_1$ is invariant for $\mathcal{H}\mathrm{Conf}(X,\{\omega\})$, and then a slice for the action $\mathcal{G}_X (\{\omega\}) \curvearrowright \{\omega\}$.

 \begin{lem}\label{lem:CGG} Let $X$ be a compact complex manifold of complex dimension $n\geq 2$, and let $\{\omega\}\in\mathcal{C}^H_X$.
  Then $\mathcal{H}\mathrm{Conf}(X,\{\omega\})$ is isomorphic to the compact Lie group of holomorphic isometries of the Gauduchon representative $\eta\in\{\omega\}$ of volume $1$, i.e.,
  $$\mathcal{H}\mathrm{Conf}(X,\{\omega\})\cong \mathcal{H}\mathrm{Isom}(X,\eta).$$
 \end{lem}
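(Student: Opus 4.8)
The plan is to prove the stronger, purely set-theoretic statement that $\mathcal{H}\mathrm{Conf}(X,\{\omega\})$ and $\mathcal{H}\mathrm{Isom}(X,\eta)$ coincide as subgroups of $\mathrm{Aut}(X)$; the asserted isomorphism is then simply the identity. The decisive tool is the \emph{uniqueness} half of the Gauduchon theorem recalled above. First I would dispose of the easy inclusion $\mathcal{H}\mathrm{Isom}(X,\eta)\subseteq\mathcal{H}\mathrm{Conf}(X,\{\omega\})$: if $\phi\in\mathrm{Aut}(X)$ satisfies $\phi^\ast\eta=\eta$, then, writing $\eta=\exp(2f/n)\omega$, one gets $\phi^\ast\omega=\exp(-2(f\circ\phi)/n)\,\eta\in\{\eta\}=\{\omega\}$, so $\phi$ preserves the conformal class and hence lies in $\mathcal{H}\mathrm{Conf}(X,\{\omega\})$.

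For the reverse inclusion I would take $\phi\in\mathcal{H}\mathrm{Conf}(X,\{\omega\})$ and show that $\phi^\ast\eta$ is again \emph{the} normalized Gauduchon representative, whence $\phi^\ast\eta=\eta$ by uniqueness. Three points must be checked. First, $\phi^\ast\eta$ lies in $\{\omega\}$: since $\phi^\ast\omega\in\{\omega\}$ we have $\{\phi^\ast\omega\}=\{\omega\}$, and $\phi^\ast\eta\in\{\phi^\ast\omega\}$ because pullback by a diffeomorphism commutes with conformal rescaling. Second, $\phi^\ast\eta$ is Gauduchon: as $\phi$ is holomorphic it commutes with $\partial$, $\overline\partial$ and with exterior powers, so $i\partial\overline\partial(\phi^\ast\eta)^{n-1}=\phi^\ast\bigl(i\partial\overline\partial\,\eta^{n-1}\bigr)=0$. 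Third, $\phi^\ast\eta$ has volume $1$: a biholomorphism is an orientation-preserving diffeomorphism, hence $\int_X d\mu_{\phi^\ast\eta}=\int_X\phi^\ast d\mu_\eta=\int_X d\mu_\eta=1$. Because $n\geq2$, the Gauduchon theorem applies, and its uniqueness clause forces $\phi^\ast\eta=\eta$, i.e. $\phi\in\mathcal{H}\mathrm{Isom}(X,\eta)$. This establishes the equality of the two groups.

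It then remains to argue that $\mathcal{H}\mathrm{Isom}(X,\eta)$ is a compact Lie group. Here I would realize it as $\mathcal{H}\mathrm{Isom}(X,\eta)=\mathrm{Isom}(X,g_\eta)\cap\mathrm{Aut}(X)$, where $g_\eta$ is the Riemannian metric underlying $\eta$. By the Myers--Steenrod theorem the isometry group $\mathrm{Isom}(X,g_\eta)$ of the compact manifold $X$ is a compact Lie group acting smoothly, and the condition of commuting with the complex structure $J$ is closed in its topology; so the holomorphic isometries constitute a closed, hence compact, Lie subgroup.

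The genuinely essential step is the middle one: the interplay between the conformal-invariance hypothesis on $\phi$, the biholomorphic invariance of both the Gauduchon equation $i\partial\overline\partial\,\eta^{n-1}=0$ and of the total volume, and the rigidity supplied by uniqueness. Everything else is formal. The one hypothesis to keep an eye on is $n\geq2$, which is precisely what guarantees that the normalized Gauduchon representative $\eta$ exists and is unique; in complex dimension one this fails, which is exactly why $\C\P^1$ must be treated apart, but it poses no difficulty in the present range.
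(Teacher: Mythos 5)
Your proof is correct and follows essentially the same route as the paper: the core step in both is that $\phi^\ast\eta$ is again a normalized Gauduchon representative of $\{\omega\}$ (holomorphy gives $i\partial\overline\partial(\phi^\ast\eta)^{n-1}=\phi^\ast(i\partial\overline\partial\,\eta^{n-1})=0$), so the uniqueness clause of Gauduchon's theorem forces $\phi^\ast\eta=\eta$. Your compactness argument (Myers--Steenrod plus closedness of the $J$-commuting condition) is a minor variant of the paper's (sequential compactness of isometries plus closedness of $\mathrm{Aut}(X)$ in $\mathrm{Diff}(X)$ via Montel), and you in fact spell out the volume and conformal-class checks that the paper leaves implicit.
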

 
 \begin{proof}
  Let $\phi \in \mathcal{H}\mathrm{Conf}(X,\{\omega\})$. Since $\phi$ is holomorphic, we have
  $$0=\phi^\ast(i\partial \bar \partial \eta^{n-1})= i\partial \bar \partial (\phi^\ast \eta)^{n-1}\;,$$
  that is, $\phi^\ast \eta$ is also a Gauduchon representative in $\{\omega\}$ with volume $1$. But then, by the uniqueness of the Gauduchon metric with volume normalization, we get $\phi^\ast \eta= \eta$, that is, $\phi$ is a holomorphic isometry of $(X,\eta)$. The converse is obvious.
  
  Finally, to see that such a group is a compact subgroup of $\mathrm{Aut}(X)$, take a sequence $\left\{\phi_j\right\}_j \subset \mathcal{H}\mathrm{Isom}(X,\eta)$. By compactness of the space of Riemannian isometries, we can take a subsequence $\left\{\phi_{j_k}\right\}_k$ converging, in the $\mathcal{C}^\infty$ sense, to an isometry $\phi_{\infty}$ of $(X,\eta)$. However, since $\mathrm{Aut}(X)$ is closed in $\mathrm{Diff}(X)$, as a consequence of the Montel theorem, then $\phi_{\infty}$ is also holomorphic. That is $\phi_{\infty} \in \mathcal{H}\mathrm{Isom}(X,\eta)$ as claimed.
 \end{proof}
 
We should remark that the fact that the group of (holomorphic) conformal isometries is compact follows also from the classical theorems of M. Obata \cite{obata} and J. Lelong-Ferrand \cite{lelong-ferrand}.

As immediate consequence, we see that the fibres over $[\omega']$ of the quotient map $$\{\omega\}_1 \longrightarrow \{\omega\}_1 / \mathcal{H}\mathrm{Conf}(X,\{\omega\})$$ are \emph{compact homogeneous spaces} of the form $\mathcal{H}\mathrm{Isom}(X,\eta)/\mathcal{H}\mathrm{Isom}(X,\omega')$. 
 In particular, the holomorphic isometry group of the Gauduchon metric is the largest along the conformal class.

\subsection{Chern scalar curvature}
Once fixed a Hermitian structure $(J,\omega)$ on $X$, denote by $S^{Ch}$ the {\em Chern scalar curvature}, that is, the scalar curvature with respect to the Chern connection. 
We have the following result, for the proof of which we refer to Appendix \ref{app:formulas}.
\begin{prop}\label{prop: J parallel connection scalar curvature}
 Let $X$ be a $2n$-dimensional smooth manifold endowed with an almost complex structure $J$, and let $\nabla$ be an affine connection on $T^{1,0}X$, for which there holds $\nabla J =0$. Take $h$ a Hermitian metric on $X$. Fix $\{ Z_i \}_{i= 1 , \ldots , n}$ a local coordinate frame of vector fields on $T^{1,0}X$.
 Then the scalar curvature of $\nabla$ is given by
 $$ S^{\nabla} = h^{i\bar j} \left( \partial_i \Gamma_{\bar j k}^k - \partial_{\bar j} \Gamma_{ik}^k \right) \;.  $$
\end{prop}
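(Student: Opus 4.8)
The plan is to compute the scalar curvature directly from the definition by unwinding it as a double contraction of the curvature of $\nabla$, exploiting the fact that $\nabla J=0$ so that $\nabla$ genuinely is a connection on the complex bundle $T^{1,0}X$ and its curvature is an $\mathrm{End}(T^{1,0}X)$-valued $2$-form. Recall that the scalar curvature is obtained in two steps: first trace the endomorphism part of the curvature to produce the Ricci-type $(1,1)$-form (the first Chern--Ricci form, up to a constant), and then contract this form with the Hermitian metric $h$. Since $h$ is of type $(1,1)$, only the mixed component of the curvature, namely $R(Z_i,\overline Z_j)$, survives the final contraction; the possible $(2,0)$ and $(0,2)$ parts of the curvature play no role. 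So I would set up notation by writing $\nabla_{Z_i}Z_k=\Gamma_{ik}^\ell Z_\ell$ and $\nabla_{\overline Z_j}Z_k=\Gamma_{\overline j k}^\ell Z_\ell$ for the connection coefficients in the given coordinate frame.

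Next I would compute the mixed curvature endomorphism. The crucial simplification is that in a \emph{coordinate} frame the Lie brackets $[Z_i,\overline Z_j]$ vanish, so the bracket term drops out of
$$ R(Z_i,\overline Z_j)Z_k=\nabla_{Z_i}\nabla_{\overline Z_j}Z_k-\nabla_{\overline Z_j}\nabla_{Z_i}Z_k. $$
Expanding both covariant derivatives via the Leibniz rule gives the components
$$ R_{i\overline j k}^{\ \ \ m}=\partial_i\Gamma_{\overline j k}^m-\partial_{\overline j}\Gamma_{ik}^m+\Gamma_{\overline j k}^\ell\Gamma_{i\ell}^m-\Gamma_{ik}^\ell\Gamma_{\overline j\ell}^m, $$
i.e. two first-derivative terms and two quadratic $\Gamma\Gamma$ terms.

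I would then take the endomorphism trace by setting $m=k$ and summing. The key (and only slightly delicate) point is that upon relabeling the dummy indices $k\leftrightarrow\ell$ in one of them, the two quadratic terms $\Gamma_{\overline j k}^\ell\Gamma_{i\ell}^k$ and $\Gamma_{ik}^\ell\Gamma_{\overline j\ell}^k$ become identical and hence cancel. This leaves the trace equal to $\partial_i\Gamma_{\overline j k}^k-\partial_{\overline j}\Gamma_{ik}^k$, and contracting with $h^{i\overline j}$ yields exactly the claimed formula for $S^\nabla$. I expect no genuine obstacle here: the content is entirely bookkeeping, and the main thing to get right is the index cancellation in the quadratic terms together with a careful statement of the sign and trace conventions defining $S^\nabla$ (consistent with the convention fixed in Section~\ref{sec:prelim} and with Proposition~\ref{prop:gamma-sign}). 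One should also remark explicitly that the argument is purely local and frame-dependent in intermediate steps but produces the globally defined invariant $S^\nabla$ after the two contractions.
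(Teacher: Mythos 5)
Your proposal is correct and follows essentially the same route as the paper: expand $R(Z_i,\overline Z_j)Z_k$ in the coordinate frame (so the bracket term vanishes), use $\nabla J=0$ to ensure the curvature endomorphism preserves $T^{1,0}X$, observe that the two quadratic $\Gamma\Gamma$ terms cancel after relabeling dummy indices, and contract with $h^{i\bar j}$. The only cosmetic difference is that you take the endomorphism trace by setting $m=k$ directly, whereas the paper first lowers the index with $h_{m\bar\ell}$ and then contracts with $h^{k\bar\ell}$ — these are identical operations.
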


In particular, for the Chern connection, we get the following well-known result.
\begin{prop}\label{prop:sch-local-coord}
 Let $X$ be a complex manifold of complex dimension $n$ endowed with a Hermitian metric $\omega$.
 In local holomorphic coordinates $\{z^j\}_j$ such that $\omega\stackrel{\text{loc}}{=}ih_{i\bar j}dz^i\wedge d\bar z^j$, the scalar curvature $S^{Ch}$ of the Chern connection $\nabla^{Ch}$ is given by
 $$ S^{Ch} (\omega) \stackrel{\text{loc}}{=} h^{i\bar j} \left( -\partial_i \partial_{\bar j} \log \det h_{k\bar \ell}  \right) = \mathrm{tr}_\omega i\overline\partial\partial\log\omega^n \; . $$
\end{prop}

\begin{proof}
Notice that, for the Chern connection, there holds
$$ \Gamma_{ij}^j = \partial_i \log \det h_{k\bar \ell} \;, $$
(see, e.g., \cite[Proposition 1.7.1]{gauduchon-book},)
and that $\Gamma_{\bar j k}^\ell = 0$. Thus, by means of Proposition \ref{prop: J parallel connection scalar curvature}, we get the statement.
\end{proof}

\section{Chern-Yamabe problem}\label{sec:cy}
Once fixed a Hermitian conformal structure $\{\omega\}$ on a compact complex manifold $X$, we are aimed at finding the ``best'' Hermitian metric. Here, we intend the problem in terms of constant Chern scalar curvature.

Note that, for $\omega'\in \{\omega\}$ and for $(\phi,\lambda) \in \mathcal{G}_X(\{\omega\})$, it holds $S^{Ch}( \lambda \phi^{\ast} \omega')= \frac{1}{\lambda} \phi^{\ast} S^{Ch}(\omega')$. Hence the condition of having constant Chern scalar curvature is invariant under the action of the gauge group. 
That is, the problem of finding metrics with constant Chern scalar curvature in the conformal class $\{\omega\}$ moves to the quotient $\{\omega\} / \mathcal{G}_X (\{\omega\})$.

\subsection{Chern-Yamabe conjecture}
 
 We define now the relevant moduli spaces for our Chern-Yamabe problem.
 Let $X$ be a compact complex manifold of complex dimension $n$, and let $\{\omega\} \in \mathcal{C}^H_X$. Then we define the \emph{moduli space of metrics with constant Chern scalar curvature} in the fixed conformal class to be the set
 $$ \mathcal{C}h\mathcal{Y}a(X,\{\omega\}) := \left. \widetilde{\mathcal{C}h\mathcal{Y}a(X,\{\omega\})} \middle/ \mathcal{H}\mathrm{Conf}(X,\{\omega\}) \right. \;, $$
 where
 $$ \widetilde{\mathcal{C}h\mathcal{Y}a(X,\{\omega\})} := \left\{\omega'\in \{\omega\}_1 \; \middle|\; S^{Ch}(\omega') \text{ is constant}\right\} \;. $$
 
 Then the Chern-Yamabe conjecture reads as follows.
 
 \begin{conj}[Chern-Yamabe conjecture]\label{conj:cy}
  Let $X$ be a compact complex manifold of complex dimension $n$, and let $\{\omega\} \in \mathcal{C}^H_X$ be a Hermitian conformal structure on $X$. Then
  $$ \mathcal{C}h\mathcal{Y}a(X,\{\omega\}) \neq \varnothing \;. $$
 \end{conj}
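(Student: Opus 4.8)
The plan is to reduce Conjecture~\ref{conj:cy} to solving the Liouville-type equation \eqref{eq:cy}, namely $\Delta^{Ch}f+S=\lambda\exp(2f/n)$, on the Gauduchon representative $\eta\in\{\omega\}$ of volume $1$, for which $S=S^{Ch}(\eta)$. The first step is to pin down the multiplier. Integrating the equation against $d\mu_\eta$ and using
$$\int_X \Delta^{Ch}f\,d\mu_\eta=\int_X f\,d^*\theta\,d\mu_\eta=0,$$
which holds \emph{precisely because} $\eta$ is Gauduchon (so $d^*\theta=0$), together with the normalization $\int_X\exp(2f/n)\,d\mu_\eta=1$ defining $\{\omega\}_1$, one finds $\lambda=\int_X S\,d\mu_\eta=\Gamma_X(\{\omega\})$. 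Thus the expected constant is forced to equal the Gauduchon degree (this is Proposition~\ref{prop:gamma-sign}), and the whole analysis splits according to its sign. Note moreover that, once $\lambda=\Gamma_X(\{\omega\})$ is prescribed in the equation, the normalization is automatic whenever $\Gamma_X(\{\omega\})\neq0$.

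When $\Gamma_X(\{\omega\})=0$ the equation collapses to the \emph{linear} elliptic problem $\Delta^{Ch}f=-S$. Here I would invoke that $\Delta^{Ch}$ is a Hopf operator with the same index as the Hodge--de Rham Laplacian, hence Fredholm of index $0$; its kernel on functions consists of the constants by the maximum principle, and for a Gauduchon metric the constants also span the kernel of the formal adjoint, this being exactly the condition $d^*\theta=0$. The Fredholm alternative then makes the equation solvable if and only if $-S\perp 1$ in $L^2(d\mu_\eta)$, i.e. if and only if $\int_X S\,d\mu_\eta=\Gamma_X(\{\omega\})=0$, which holds by hypothesis. The solution exists and is unique up to an additive constant, hence the metric is unique up to scaling.

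For $\Gamma_X(\{\omega\})<0$, so $\lambda<0$, I would run a continuity method along the path $S_t:=(1-t)\,\Gamma_X(\{\omega\})+t\,S$, which keeps $\int_X S_t\,d\mu_\eta=\Gamma_X(\{\omega\})$ and starts at $t=0$ from the trivial solution $f\equiv0$. Openness of the solvable set follows from the implicit function theorem once one checks that the linearization $v\mapsto\Delta^{Ch}v-\tfrac{2}{n}\lambda\exp(2f/n)\,v$ is invertible: since $\lambda<0$ the zeroth-order coefficient $-\tfrac{2}{n}\lambda\exp(2f/n)$ is strictly positive, so the maximum principle forces trivial kernel and index $0$ gives surjectivity (this also yields uniqueness). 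Closedness reduces to uniform a priori estimates: evaluating the equation at the maximum and minimum of $f_t$, where $\Delta^{Ch}f_t$ has a definite sign, and exploiting $\lambda<0$ together with $\min_X S_t\le\Gamma_X(\{\omega\})<0$, yields a $C^0$ bound, after which standard Schauder bootstrapping controls the higher norms. This is the genuine analytic core, the place where the sign $\lambda<0$ is essential, and it mirrors the proof of the Uniformization Theorem in the hyperbolic case.

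The main obstacle is the remaining case $\Gamma_X(\{\omega\})>0$. There the linearized operator acquires a negative zeroth-order term, so the maximum principle no longer delivers either estimates or uniqueness, and neither the linear argument nor the continuity scheme above survives. A variational attack is available only when $\{\omega\}$ admits a balanced representative, in which case \eqref{eq:cy} becomes the Euler--Lagrange equation of the functional \eqref{eq:el-functional} under the constraint \eqref{eq:el-constraint} (Proposition~\ref{prop:variational-form}); but the positive sign destroys coercivity and pushes the relevant Moser--Trudinger estimate to its borderline, exactly as in the positive Yamabe and Nirenberg problems. I would therefore not expect a clean existence proof in this generality; indeed the non-uniqueness produced by bifurcation on products of round $\C\P^1$'s suggests the naive conjecture may fail. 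Consequently I would aim only for existence under additional hypotheses, such as products with curves $\Sigma_g$ of genus $g\ge2$ or suitable smallness conditions, or via the Chern--Yamabe flow, leaving the general positive case open.
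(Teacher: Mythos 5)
Your proposal follows the paper's own treatment of this statement almost exactly: the statement is a \emph{conjecture}, and the paper resolves it only for $\Gamma_X(\{\omega\})\leq 0$ (Theorems \ref{thm:cy-zero-gaud} and \ref{thm:cy-neg-gaud}), via precisely the reduction to \eqref{eq:cy}, the identification $\lambda=\Gamma_X(\{\omega\})$ of Proposition \ref{prop:gamma-sign}, the Fredholm alternative at the Gauduchon metric in the zero case, and a continuity method in the negative case; like you, it leaves the positive case open, offering only the partial results of Section \ref{sec:pos-gaud}. So your honest assessment of what can and cannot be proved matches the paper.

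One concrete gap in your sketch of the negative case: the a priori $C^0$ bound does not follow from ``$\lambda<0$ together with $\min_X S_t\leq\Gamma_X(\{\omega\})<0$''. At a minimum point $q$ of $f_t$ one gets $\lambda\exp(2f_t(q)/n)\leq S_t(q)$, i.e.\ $\exp(2f_t(q)/n)\geq S_t(q)/\lambda$; this yields a positive lower bound on $f_t(q)$ only if $S_t(q)<0$, and the value of $S$ at the (unknown) minimum point need not be negative if you work directly at the Gauduchon representative, whose Chern scalar curvature is only negative \emph{in average}. This is exactly why the paper inserts a preliminary step before the continuity method: it solves the linear equation $\Delta^{Ch}_\eta f=-S^{Ch}(\eta)+\int_X S^{Ch}(\eta)\,d\mu_\eta$ (solvable since $\eta$ is Gauduchon) to replace the reference metric by a conformal representative with $S^{Ch}=\exp(-2f/n)\,\Gamma_X(\{\omega\})<0$ \emph{pointwise}, after which both the maximum- and minimum-point estimates close. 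With that step restored, your argument coincides with the paper's. (A cosmetic difference: the paper deforms the constant term as $tS^{Ch}(\omega)+\lambda(1-t)$ rather than deforming $S$ along $S_t=(1-t)\Gamma_X(\{\omega\})+tS$, but these are the same path.)
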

 
  \begin{rmk}\label{Comparison} We observe that the above Chern-Yamabe conjecture goes in different direction with respect to both the classical Yamabe problem and the Yamabe problem for almost Hermitian manifolds studied by H. del Rio and S. Simanca in \cite{delrio-simanca}. In fact, by \cite[Corollary 4.5]{liu-yang-2}, on a compact complex manifold, if the (average of the) Chern scalar curvature of a Hermitian metric is equal either to the (average of the) scalar curvature of the corresponding Riemannian metric or to the (average of the) $J$-scalar curvature of the corresponding Riemannian metric, then the metric has to be K\"ahler.
 \end{rmk}

\begin{exa}[Compact complex curves]
 On compact real surfaces, fixing the conformal class is the same as fixing the (necessary integrable) complex structure, since $CO(2)\cong GL(1,\C)$. Hence the conformal biholomorphic maps are all the biholomorphic maps of the complex curve $C$, that is, $\mathcal{H}\mathrm{Conf}(C,\{\omega\})=\mathrm{Aut}(C)$. Thus $\mathcal{C}h\mathcal{Y}a(\C\P^1)=\{p\}$ corresponding to the standard round metric. Note that the metric is unique only up to $\mathrm{Aut}(\C\P^1)\cong \mathrm{PGL}(2,\C)$. Thus the point $p$ still has a stabilizer equal to the isometries, i.e., $\mathrm{SO}(3)$. For any other complex curve $C$ of genus $g\geq 1$ one has again $\mathcal{C}h\mathcal{Y}a(C)=\{p\}$ corresponding to the unique metric $g_{\text{cst}}$ of constant (negative, if not torus) scalar curvature. In particular, if $g\geq 2$ the stabilizer is the discrete group $\mathrm{Aut}(C)\cong \mathrm{Isom}(C, g_{\text{cst}})$ and the metric is unique, even before taking quotient.
\end{exa}

\subsection{Gauduchon degree}
 From now on let us assume $\dim_\C X \geq 2$. Thanks to the foundational theorem by P. Gauduchon, we can introduce a natural \emph{invariant} of the conformal class $\{\omega\}\in\mathcal{C}^H_X$, namely, the \emph{Gauduchon degree}
 $$ \Gamma_X (\{\omega\}) \in \R $$
 defined as follows:
 take $\eta\in\{\omega\}$ the unique Gauduchon representative in $\{\omega\}$ of volume $1$, and define 
 $$ \Gamma_X(\{\omega\}) := \frac{1}{(n-1)!} \int_X c_1^{BC}(K_X^{-1})\wedge\eta^{n-1} = \int_X S^{Ch}(\eta) d\mu_\eta \;.$$
 It corresponds to the degree of the anti-canonical line bundle $K_X^{-1}$. Recall \cite{gauduchon-cras1981} (see also \cite[\S I.17]{gauduchon-mathann}) that the degree of a holomorphic line bundle is equal to the volume of the divisor associated to any meromorphic section by means of the Gauduchon metric.

\medskip

Finally we have the following result, after Gauduchon, concerning the relation between $\Gamma_X(\{\omega\})$ and the {\em Kodaira dimension} of $X$, that is,
$$ \mathrm{Kod}(X) \;:=\; \limsup_{m\to+\infty} \frac{\log\dim_\C H^0(X;K_X^{\otimes m})}{\log m} \;. $$
We recall that $K_X$ is said to be {\em holomorphically torsion} if there exists $\ell\in\N\setminus\{0\}$ such that $K_X^{\otimes\ell}=\mathcal{O}_X$; in this case, $c_1^{BC}(X)=0$.

\begin{prop}\label{prop:kod-gauddegree}
Let $X$ be a compact complex manifold.
If $\mathrm{Kod}(X) > 0 $, then $\Gamma_X(\{\omega\})<0$ for any Hermitian conformal class $\{\omega\} \in \mathcal{C}^H_X$. If $\mathrm{Kod}(X) = 0 $, then $\Gamma_X(\{\omega\})<0$ for any class $\{\omega\} \in \mathcal{C}^H_X$ beside the case when $K_X$ is holomorphically torsion, when $\Gamma_X(\{\omega\})=0$.  
\end{prop}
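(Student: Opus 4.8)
The plan is to reduce everything to the divisorial interpretation of the Gauduchon degree recalled above, applied to a pluricanonical bundle. First I would rewrite $\Gamma_X(\{\omega\})$ in terms of the canonical bundle: by additivity of $c_1^{BC}$ and linearity of the pairing against $\eta^{n-1}$,
$$ \Gamma_X(\{\omega\}) = \frac{1}{(n-1)!} \int_X c_1^{BC}(K_X^{-1}) \wedge \eta^{n-1} = -\deg_\eta K_X \;, $$
where $\deg_\eta$ denotes the degree with respect to the Gauduchon representative $\eta\in\{\omega\}$ of volume $1$. The essential input is Gauduchon's result \cite{gauduchon-cras1981}: for any holomorphic line bundle $L$ admitting a nonzero meromorphic section $s$, one has $\deg_\eta L = \mathrm{Vol}_\eta(\mathrm{div}(s))$, the volume being computed with respect to $\eta$.

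Next I would produce an effective divisor. In both cases under consideration $\mathrm{Kod}(X) \geq 0$, so there exist $m \geq 1$ and a nonzero holomorphic section $\sigma \in H^0(X; K_X^{\otimes m})$; since $\sigma$ is holomorphic, its zero divisor $Z := \mathrm{div}(\sigma)$ is effective. Applying the Gauduchon formula to $L = K_X^{\otimes m}$ and using $\deg_\eta K_X^{\otimes m} = m\,\deg_\eta K_X$, I obtain
$$ m\,\deg_\eta K_X = \mathrm{Vol}_\eta(Z) \geq 0 \;, $$
because an effective divisor has non-negative $\eta$-volume. Hence $\deg_\eta K_X \geq 0$, that is, $\Gamma_X(\{\omega\}) \leq 0$ whenever $\mathrm{Kod}(X) \geq 0$.

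It then remains to analyse the equality case. Since a nonempty effective divisor (a complex hypersurface counted with positive multiplicities) has strictly positive $\eta$-volume, $\mathrm{Vol}_\eta(Z) = 0$ forces $Z = 0$, i.e. $\sigma$ is nowhere vanishing, giving a trivialization $K_X^{\otimes m} \cong \mathcal{O}_X$, so that $K_X$ is holomorphically torsion. Conversely, if $K_X$ is holomorphically torsion then $c_1^{BC}(X) = 0$, whence $\Gamma_X(\{\omega\}) = 0$ for every class, as already noted. Finally, a nontrivial torsion line bundle carries no nonzero holomorphic section, so $K_X$ holomorphically torsion forces $\dim_\C H^0(X; K_X^{\otimes m}) \leq 1$ for all $m$ and hence $\mathrm{Kod}(X) = 0$; equivalently, $\mathrm{Kod}(X) > 0$ excludes the torsion case. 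Combining these, the case $\mathrm{Kod}(X) > 0$ gives $\Gamma_X(\{\omega\}) < 0$ for every class, while the case $\mathrm{Kod}(X) = 0$ gives $\Gamma_X(\{\omega\}) < 0$ except precisely when $K_X$ is holomorphically torsion, where $\Gamma_X(\{\omega\}) = 0$.

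The only genuinely non-elementary ingredient is the divisorial interpretation of the Gauduchon degree borrowed from \cite{gauduchon-cras1981}; granting this, the step requiring the most care is the equality case, namely the equivalence between vanishing $\eta$-volume of the zero divisor, triviality of $K_X^{\otimes m}$, and holomorphic torsion of $K_X$, together with the observation that torsion forces $\mathrm{Kod}(X) = 0$. The rest is bookkeeping via additivity of the degree.
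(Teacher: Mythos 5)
Your argument is correct and follows exactly the route the paper intends: the paper states this proposition ``after Gauduchon'' without writing out a proof, relying precisely on the divisorial interpretation of the degree from \cite{gauduchon-cras1981} (``the degree of a holomorphic line bundle is equal to the volume of the divisor associated to any meromorphic section by means of the Gauduchon metric''), applied to a pluricanonical section. Your write-up, including the careful treatment of the equality case via triviality of $K_X^{\otimes m}$ and the observation that a nontrivial torsion bundle has no nonzero holomorphic sections, is a faithful and correct fleshing-out of that same argument.
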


Few remarks are in place. When the canonical bundle is holomorphically torsion,  one get that the first Bott-Chern class vanishes, i.e.,  $c_1^{BC}(X)=0$.
Hence any conformal class actually carries a metric with vanishing Chern-Ricci curvature, by using the Chern-Ricci potential as a conformal potential; see \cite[Theorem 1.2]{tosatti}.
Moreover note that $\mathrm{Kod}(X)=-\infty$ does not imply that $\Gamma_X(\{\omega\})>0$. An example is given by the Inoue surface, see Example \ref{ex:inoue}. In fact, it holds the following: if there exists at least one non-trivial pluri-anti-canonical section $s \in H^0(X; K_X^{\otimes (-m)})$, then $\Gamma_X(\{\omega\})>0$ for all $\{\omega\}$ in $\mathcal{C}^H_X$.

\subsection{Semi-linear PDE for the Chern-Yamabe problem}
Let $X$ be a compact complex manifold of complex dimension $n$, and fix a Hermitian metric $\omega$ in the normalized Hermitian conformal class $\{\omega\}_1$.
In this section, we restate the Chern-Yamabe problem in terms of a semi-linear elliptic equation of $2$nd order.

\medskip

Consider a conformal change $\tilde \omega = \exp(2f/n) \omega$. 
By Proposition \ref{prop:sch-local-coord}, the Chern scalar curvature changes as follows:
\begin{equation}\label{eq:conf-change-sch}
S^{Ch} \left( \exp(2f/n) \omega \right) = \exp(-2f/n) \left( S^{Ch} \left(  \omega \right) + \Delta^{Ch}_\omega f \right)\; ,
\end{equation}
where we recall that $\Delta^{Ch}_\omega f = \Delta_{d,\omega} f + (df,\, \theta )_\omega$, see Lemma \ref{lem:chern-laplacian}.

Hence, the Chern-Yamabe Conjecture \ref{conj:cy} translates into the following.

\begin{conj}[Chern-Yamabe conjecture --- analytic equation]\label{conj:cy-2}
 Let $X$ be a compact complex manifold of complex dimension $n$ and let $\{\omega\} \in \mathcal{C}^H_X$ be a Hermitian conformal structure on $X$. Let $S:=S^{Ch}(\omega)$ be the Chern scalar curvature of a fixed representative $\omega\in\{\omega\}$. Then there exists $\left( f, \lambda \right) \in \mathcal{C}^\infty(X;\R) \times \R$ solution of the equation
 \begin{equation}\label{eq:cy}\tag{ChYa}
  \Delta^{Ch}f + S = \lambda \exp(2f/n) \;.
 \end{equation}
 
 We may also suppose that $f$ satisfies the following normalization. Let $\eta$ be the unique Gauduchon representative in $\{\omega\}$ with volume $1$. Let $\eta=\exp(-2g/n)\omega$. Then we assume that
 \begin{equation}\label{eq:cy-constraint}\tag{ChYa-n}
  \int_X \exp(2(f+g)/n)d\mu_\eta = 1 \;.
 \end{equation} 
\end{conj}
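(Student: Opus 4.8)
The plan is to set up the problem on the distinguished Gauduchon representative $\eta\in\{\omega\}$ of volume $1$. Writing a competitor as $\exp(2u/n)\eta$ (so that $u$ plays the role of $f+g$ in the normalization \eqref{eq:cy-constraint}), the formula \eqref{eq:conf-change-sch} applied with $\eta$ as reference turns the Chern--Yamabe equation into
\[
\Delta^{Ch}_\eta u + S^{Ch}(\eta) = \lambda\exp(2u/n).
\]
Integrating against $d\mu_\eta$ and using that $\eta$ is Gauduchon --- so that $\int_X\Delta^{Ch}_\eta u\,d\mu_\eta=\int_X(du,\theta)_\eta\,d\mu_\eta=\int_X u\,(d^*\theta)\,d\mu_\eta=0$ by Lemma \ref{lem:chern-laplacian} --- together with $\int_X S^{Ch}(\eta)\,d\mu_\eta=\Gamma_X(\{\omega\})$, yields $\Gamma_X(\{\omega\})=\lambda\int_X\exp(2u/n)\,d\mu_\eta$. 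Hence, under the normalization \eqref{eq:cy-constraint}, which is exactly $\int_X\exp(2u/n)\,d\mu_\eta=1$, the multiplier is forced to be $\lambda=\Gamma_X(\{\omega\})$. The sign of $\lambda$ is therefore dictated by the Gauduchon degree, and I would split the argument into the three regimes $\Gamma_X(\{\omega\})$ zero, negative, positive.

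When $\Gamma_X(\{\omega\})=0$ the equation collapses to the linear problem $\Delta^{Ch}_\eta u=-S^{Ch}(\eta)$. Here I would invoke Fredholm theory: $\Delta^{Ch}_\eta$ is an elliptic operator of index $0$ whose kernel consists of the constants (by the maximum principle), while the integration-by-parts identity above shows $(\Delta^{Ch}_\eta)^*1=0$, so its cokernel is spanned by $1$. Solvability thus reduces to the single condition $\int_X S^{Ch}(\eta)\,d\mu_\eta=0$, which holds precisely because $\Gamma_X(\{\omega\})=0$. Elliptic regularity gives a smooth $u$, unique up to an additive constant, and that constant is pinned down by \eqref{eq:cy-constraint}.

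When $\Gamma_X(\{\omega\})<0$ I would run a continuity method modelled on the analytic uniformization of curves of genus $g\ge2$. Deform the background curvature by $S_t:=(1-t)\,\Gamma_X(\{\omega\})+t\,S^{Ch}(\eta)$, $t\in[0,1]$, which keeps $\int_X S_t\,d\mu_\eta=\Gamma_X(\{\omega\})$, and solve $\Delta^{Ch}_\eta u_t+S_t=\Gamma_X(\{\omega\})\exp(2u_t/n)$; at $t=0$ the constant $u_0\equiv0$ is an explicit solution. Openness follows from the implicit function theorem between Hölder spaces: the linearization is $\Delta^{Ch}_\eta-\tfrac{2}{n}\Gamma_X(\{\omega\})\exp(2u_t/n)$, whose zeroth-order coefficient is strictly positive because $\Gamma_X(\{\omega\})<0$; the maximum principle then shows the kernel is trivial, and, the index being $0$, the operator is invertible. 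Closedness rests on a priori estimates, for which I would first extract a $\mathcal{C}^0$ bound and then bootstrap by Schauder theory.

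The main obstacle is exactly this $\mathcal{C}^0$ estimate. The upper bound for $u_t$ is immediate from the maximum principle: at a maximum point $p$ one has $\Delta^{Ch}_\eta u_t(p)\ge0$, so $\Gamma_X(\{\omega\})\exp(2u_t(p)/n)\ge S_t(p)$, and since $\Gamma_X(\{\omega\})<0$ this confines $\max u_t$ in terms of $\sup_X|S_t|$ and $|\Gamma_X(\{\omega\})|$. The lower bound is the delicate point, since the analogous evaluation at a minimum is vacuous when $S_t$ is positive there; to prevent the conformal factor from collapsing one must couple the equation with the integral normalization $\int_X\exp(2u_t/n)\,d\mu_\eta=1$ and the resulting gradient control, invoking the integral estimates of Moser--Trudinger type that underlie the genus $g\ge2$ uniformization. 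Uniqueness up to scaling I would obtain separately, by subtracting the equations for two solutions and testing at the extrema of their difference: the strict monotonicity of $s\mapsto\Gamma_X(\{\omega\})\exp(2s/n)$ for $\Gamma_X(\{\omega\})<0$ forces the difference to be constant. The regime $\Gamma_X(\{\omega\})>0$ falls outside this scheme --- both the invertibility of the linearization and the maximum-principle bound reverse --- and I would not expect an elementary continuity argument to settle it.
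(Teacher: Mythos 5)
Your reduction to the Gauduchon representative, the identification $\lambda=\Gamma_X(\{\omega\})$ under the normalization \eqref{eq:cy-constraint}, the Fredholm argument in the case $\Gamma_X(\{\omega\})=0$, and the openness and uniqueness steps for $\Gamma_X(\{\omega\})<0$ all match the paper (Proposition \ref{prop:gamma-sign}, Theorem \ref{thm:cy-zero-gaud}, Theorem \ref{thm:cy-neg-gaud}); and, like the paper, you leave the positive case open, which is consistent with the statement being a conjecture that the paper itself only establishes for $\Gamma_X(\{\omega\})\leq 0$.

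There is, however, one genuine gap in your negative-degree argument, and you have located it yourself: the uniform lower ($\mathcal{C}^0$) bound needed for closedness. Along your path $S_t=(1-t)\Gamma_X(\{\omega\})+t\,S^{Ch}(\eta)$ the background curvature can be positive somewhere, the minimum-point evaluation is vacuous, and the Moser--Trudinger-type integral estimates you invoke are not carried out; note moreover that by Proposition \ref{prop:variational-form} the equation has a variational structure only for balanced classes, so the variational machinery underlying the genus $g\geq 2$ uniformization is not available in general. The paper's resolution is a \emph{preliminary conformal normalization}: before starting the continuity method, solve the linear equation $\Delta^{Ch}_\eta f=-S^{Ch}(\eta)+\int_X S^{Ch}(\eta)\,d\mu_\eta$ (solvable exactly as in the zero-degree case, since $\eta$ is Gauduchon) to replace the reference metric by $\exp(2f/n)\eta$, whose Chern scalar curvature equals $\exp(-2f/n)\,\Gamma_X(\{\omega\})$ and is therefore \emph{pointwise} negative. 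With this reference metric and the path $\Delta^{Ch}f+tS^{Ch}(\omega)-\lambda\exp(2f/n)+\lambda(1-t)=0$, the evaluation at a minimum point gives $-\lambda\exp(2f_{t}(q)/n)\geq\min\{\min_X(-S^{Ch}(\omega)),-\lambda\}>0$, so both $\mathcal{C}^0$ bounds follow from the maximum principle alone and no integral estimate is needed. Your proof becomes complete (in the cases the paper handles) once you insert this preliminary step.
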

In fact, the metric $\exp(2f/n)\omega=\exp(2(f+g)/n)\eta\in\{\omega\}_1$ has constant Chern scalar curvature equal to $\lambda$, and so it yields a class in $\mathcal{C}h\mathcal{Y}a(X,\{\omega\})$.

\medskip

Concerning the sign of the expected constant Chern scalar curvature $\lambda$ as above, it is related to the Gauduchon degree as follows.
\begin{prop}\label{prop:gamma-sign}
 Let $X$ be a compact complex manifold and let $\{\omega\}\in\mathcal{C}^H_X$. Assume that $\omega'\in\{\omega\}_1$ has constant Chern scalar curvature equal to $\lambda\in\R$.
 Then
 $$ \Gamma_X(\{\omega\}) = \lambda \;. $$
 In particular, the sign of the Gauduchon degree $\Gamma_X(\{\omega\})$ is equal to the sign of the eventual constant Chern scalar curvature metric in the class $\{\omega\}$.
\end{prop}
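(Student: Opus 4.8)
The plan is to refer everything to the distinguished Gauduchon representative $\eta\in\{\omega\}$ of volume $1$, against whose volume form the Chern Laplacian has vanishing integral. Since $\omega'\in\{\omega\}_1$, I would write $\omega'=\exp(2f/n)\eta$ for some $f\in\mathcal{C}^\infty(X;\R)$, and the normalization defining $\{\omega\}_1$ reads $\int_X\exp(2f/n)\,d\mu_\eta=1$.

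First I would apply the conformal change formula \eqref{eq:conf-change-sch} with base metric $\eta$ to the hypothesis $S^{Ch}(\omega')=\lambda$, obtaining
$$ \lambda\exp(2f/n) = S^{Ch}(\eta)+\Delta^{Ch}_\eta f \;. $$
Integrating both sides against $d\mu_\eta$ over $X$: the left-hand side becomes $\lambda\int_X\exp(2f/n)\,d\mu_\eta=\lambda$ by the normalization, while the first term on the right is exactly $\int_X S^{Ch}(\eta)\,d\mu_\eta=\Gamma_X(\{\omega\})$ by the definition of the Gauduchon degree. Hence $\lambda=\Gamma_X(\{\omega\})$ would follow at once, provided I show that $\int_X\Delta^{Ch}_\eta f\,d\mu_\eta=0$.

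This last vanishing is the only real point, and it is precisely where the Gauduchon condition on $\eta$ enters. By Lemma \ref{lem:chern-laplacian} one has $\Delta^{Ch}_\eta f=\Delta_{d,\eta}f+(df,\,\theta)_\eta$, with $\theta$ the Lee form of $\eta$. The Hodge-de Rham term integrates to zero, since $\int_X\Delta_{d,\eta}f\,d\mu_\eta=\int_X(d1,\,df)_\eta\,d\mu_\eta=0$ by the integration-by-parts formula with $u=1$. For the torsion term I would integrate by parts once more, $\int_X(df,\,\theta)_\eta\,d\mu_\eta=\int_X f\,(d^*\theta)\,d\mu_\eta$, and this vanishes because $\eta$ is Gauduchon, i.e. $d^*\theta=0$. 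Combining the two contributions gives $\int_X\Delta^{Ch}_\eta f\,d\mu_\eta=0$, and the claim follows.

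I expect no serious obstacle: once the conformal change formula and the Gauduchon property are in hand, the proof is essentially a single integration. The conceptual content — and the reason the Gauduchon representative is the natural base point for the normalization $\{\omega\}_1$ — is exactly that $\Delta^{Ch}_\eta$ has vanishing integral against $d\mu_\eta$ only for the Gauduchon metric, which is what makes the $\exp(2f/n)$ factor on the left integrate cleanly via the volume constraint.
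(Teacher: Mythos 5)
Your proof is correct and follows exactly the paper's own argument: express $\omega'=\exp(2f/n)\eta$ with $\eta$ the unit-volume Gauduchon representative, integrate the conformal change equation against $d\mu_\eta$, and kill $\int_X\Delta^{Ch}_\eta f\,d\mu_\eta$ via Lemma \ref{lem:chern-laplacian} together with $d^*\theta=0$. No differences worth noting.
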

 
\begin{proof}
 As representative in $\{\omega\}$, fix the unique Gauduchon metric $\eta\in\{\omega\}$ of volume $1$. Equation \eqref{eq:cy} yields
 $$ \int_X \Delta^{Ch}_\eta f d\mu_\eta + \int _X S^{Ch}(\eta) d\mu_\eta = \lambda \int_X \exp(2f/n) d\mu_\eta $$
 where (denote by $\theta$ the Lee form associated to $\eta$,)
 $$ \int_X \Delta^{Ch}_\eta f d\mu_\eta = \int_X \Delta_d f d\mu_\eta + \int_X (df, \theta) d\mu_\eta = \int_X \Delta_d f d\mu_\eta + \int_X (f, d^*\theta) d\mu_\eta = 0 $$
 since $\eta$ is Gauduchon.
 Therefore
 $$ \Gamma_X(\{\omega\}) = \int_X S^{Ch}(\eta) d\mu_\eta = \lambda \int_X \exp(2f/n) d\mu_\eta = \lambda \;, $$
 yielding the statement.
\end{proof}

 It is maybe interesting to point out a small remark on the relation between the above normalization and the constant volume one slice.
 By the Jensen inequality, it follows that the volume of metrics in the slice $\{\omega\}_1$ is always greater than or equal to $1$.
 This implies that, on the more geometric slice of volume one metrics, the value of the (non-zero) eventual constant Chern scalar curvature metrics is, in modulus, always greater than or equal to the Gauduchon degree.

\section{Solution of the Chern-Yamabe problem for zero Gauduchon degree}\label{sec:zero-gaud}
In case of zero Gauduchon degree, the semi-linear elliptic differential equation \eqref{eq:cy} becomes just linear, and so we get a solution for the corresponding Chern-Yamabe problem. In the following, we omit same analytical details concerning the precise function spaces where we realize the equation, since similar issues will be more carefully addressed in the next section when we will consider a non-linear situation. 

\begin{thm}\label{thm:cy-zero-gaud}
 Let $X$ be a compact complex manifold and let $\{\omega\}\in \mathcal{C}^H_X$.
 If $\Gamma_X(\{\omega\}) = 0$, then
 $\widetilde{\mathcal{C}h\mathcal{Y}a(X,\{\omega\})} =\{p\}$. In particular, denoting with $\omega_p\in\{\omega\}_1$ such unique metric with constant Chern scalar curvature, we have:
 \begin{itemize}
  \item $S^{Ch}(\omega_p)=\Gamma_X(\{\omega\})=0$;
  \item $\mathcal{H}\mathrm{Isom}(X,\omega_p)\cong \mathcal{H}\mathrm{Isom}(X,\eta) \cong \mathcal{H}\mathrm{Conf}(X,\{\omega\})$, where $\eta\in\{\omega\}_1$ denotes the unique Gauduchon representative in $\{\omega\}$ with volume $1$.
 \end{itemize}
\end{thm}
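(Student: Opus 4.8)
The statement to prove is Theorem~\ref{thm:cy-zero-gaud}: when $\Gamma_X(\{\omega\})=0$, the set $\widetilde{\mathcal{C}h\mathcal{Y}a(X,\{\omega\})}$ is a single point, with the listed properties. The plan is to work with the Gauduchon representative $\eta\in\{\omega\}_1$ of volume $1$ as the reference metric. The key observation is that, by Proposition~\ref{prop:gamma-sign}, any metric $\omega'\in\{\omega\}_1$ with constant Chern scalar curvature must have that constant equal to $\lambda=\Gamma_X(\{\omega\})=0$. Therefore the defining equation~\eqref{eq:cy}, written relative to $\eta$, becomes simply
\begin{equation*}
\Delta^{Ch}_\eta f + S^{Ch}(\eta) = 0 \;,
\end{equation*}
since the right-hand side $\lambda\exp(2f/n)$ vanishes identically. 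This is precisely why the problem linearizes in the zero-degree case, as the section's preamble already advertises.

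\emph{Solving the linear equation.} First I would observe that this is an inhomogeneous linear equation $\Delta^{Ch}_\eta f = -S^{Ch}(\eta)$ for the elliptic Hopf operator $\Delta^{Ch}_\eta$. Since the Chern Laplacian has the same index as the Hodge--de Rham Laplacian and has no zeroth-order term, its kernel on a compact manifold consists only of the constants (a function where $\Delta^{Ch}f$ vanishes identically must be constant, by the maximum-principle sign convention: at an interior maximum $\Delta^{Ch}f=\Delta_d f+(df,\theta)_\eta\geq 0$ with $df=0$, and symmetrically at a minimum). Solvability of $\Delta^{Ch}_\eta f=-S^{Ch}(\eta)$ then requires the right-hand side to be orthogonal to the kernel of the formal adjoint $(\Delta^{Ch}_\eta)^*$. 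By the Fredholm alternative the cokernel is one-dimensional; the adjoint kernel is spanned by a positive function (it is the kernel of a Hopf operator), and integrating the equation against that positive weight, together with the fact that $\int_X S^{Ch}(\eta)\,d\mu_\eta=\Gamma_X(\{\omega\})=0$ after accounting for the Gauduchon condition $d^*\theta=0$ exactly as in the proof of Proposition~\ref{prop:gamma-sign}, is what guarantees a solution $f$ exists. Elliptic regularity then gives $f\in\mathcal{C}^\infty$.

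\emph{Uniqueness.} The difference of two solutions $f_1,f_2$ satisfies $\Delta^{Ch}_\eta(f_1-f_2)=0$, hence $f_1-f_2$ is constant. But both associated metrics lie in the normalized slice $\{\omega\}_1$, i.e. both satisfy the volume constraint~\eqref{eq:cy-constraint}, $\int_X\exp(2f_i/n)\,d\mu_\eta=1$; a constant shift of $f$ rescales this integral monotonically, so the constant must be zero and $f_1=f_2$. This pins down the unique metric $\omega_p=\exp(2f/n)\eta\in\{\omega\}_1$, establishing $\widetilde{\mathcal{C}h\mathcal{Y}a(X,\{\omega\})}=\{p\}$, and $S^{Ch}(\omega_p)=\Gamma_X(\{\omega\})=0$ is immediate. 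The isometry claim then follows: since the constant-curvature metric is unique in $\{\omega\}_1$ and that slice is invariant under $\mathcal{H}\mathrm{Conf}(X,\{\omega\})$ (as noted just before Lemma~\ref{lem:CGG}), every holomorphic conformal automorphism fixes $\omega_p$, so by Lemma~\ref{lem:CGG} we get the chain $\mathcal{H}\mathrm{Isom}(X,\omega_p)\cong\mathcal{H}\mathrm{Isom}(X,\eta)\cong\mathcal{H}\mathrm{Conf}(X,\{\omega\})$.

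\emph{Main obstacle.} The only genuinely nontrivial point is the solvability of the linear equation, which hinges on correctly identifying the cokernel of the non-self-adjoint operator $\Delta^{Ch}_\eta$ and checking the integrability condition against the \emph{adjoint} kernel rather than naively integrating against the volume form. The cleanest route is to invoke the Gauduchon condition to arrange that $\int_X\Delta^{Ch}_\eta f\,d\mu_\eta=0$ for all $f$ (exactly the computation in Proposition~\ref{prop:gamma-sign}), which shows the constant function lies in the adjoint kernel; combined with the index equality this gives $\dim\ker(\Delta^{Ch}_\eta)^*=1$, and $\int_X S^{Ch}(\eta)\,d\mu_\eta=0$ is precisely the required compatibility. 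Everything else is standard linear elliptic theory and the maximum principle.
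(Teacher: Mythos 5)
Your proposal is correct and follows essentially the same route as the paper: reduce to the linear equation $\Delta^{Ch}_\eta f=-S^{Ch}(\eta)$ at the Gauduchon representative, apply the Fredholm alternative after identifying $\ker(\Delta^{Ch}_\eta)^*$ with the constants (you get this from the index argument plus the observation that $\int_X\Delta^{Ch}_\eta f\,d\mu_\eta=0$ when $d^*\theta=0$, whereas the paper computes $\int_X u(\Delta^{Ch}_\eta)^*u\,d\mu_\eta=\int_X|\nabla u|^2\,d\mu_\eta$ directly --- a minor variation of the same step), and conclude uniqueness from the kernel being constants together with the normalization \eqref{eq:cy-constraint}. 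The final statements are derived from Proposition \ref{prop:gamma-sign} and Lemma \ref{lem:CGG} exactly as in the paper.
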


\begin{proof}
Fix $\eta\in\{\omega\}$ the unique Gauduchon representative in $\{\omega\}$ with volume $1$.
We are reduced to solve
$$ \Delta^{Ch} f = -S^{Ch}(\eta) \; . $$
Notice that the (formal) adjoint of $\Delta^{Ch}$ on smooth functions $g$ is 
$$ (\Delta^{Ch})^* g = \Delta_d g - (dg,\, \theta )_{\eta} \, , $$
where $\theta$ denotes the Lee form of $\eta$.
It has $\ker (\Delta^{Ch})^*$ equal to the constants.
Indeed, take $u$ in the kernel of $(\Delta^{Ch})^*$. We may assume $u$ is smooth by standard regularity. Then 
\begin{align}
 0= \int_X u (\Delta^{Ch})^* u d\mu_{\eta} = \int_X \left( |\nabla u|^2 - \frac{1}{2} (du^2 , \, \theta ) \right) d\mu_{\eta}  = 
\int_X  |\nabla u|^2  d\mu_{\eta} \, ,
\end{align}
since $d^*\theta=0$ because $\eta$ is Gauduchon.

Since the integral of  $-S^{Ch} (\eta)$ is zero, 
it means that $-S^{Ch} (\eta)\in \left( \ker (\Delta^{Ch})^* \right)^\perp = \mathrm{imm} \Delta^{Ch}$.
We thus conclude the existence of a metric of zero Chern scalar curvature. The uniqueness follows immediately by the fact that the kernel of the Chern Laplacian at the Gauduchon metric consists of just constants.

The last statements follow from Proposition \ref{prop:gamma-sign} and the assumption, and from Lemma \ref{lem:CGG} and the uniqueness.
\end{proof}

We should remark that the case of zero Gauduchon degree is genuinely interesting only when $\mathrm{Kod}(X)=-\infty$.

\section{Solution of the Chern-Yamabe problem for negative Gauduchon degree}\label{sec:neg-gaud}

In this section, we provide a first positive answer for the Chern-Yamabe conjecture in case of negative Gauduchon degree.

\begin{thm}\label{thm:cy-neg-gaud}
 Let $X$ be a compact complex manifold and let $\{\omega\}\in \mathcal{C}^H_X$.
 If $\Gamma_X(\{\omega\}) < 0$, then $\widetilde{\mathcal{C}h\mathcal{Y}a(X,\{\omega\})} =\{p\}$. In particular, denoting with $\omega_p\in\{\omega\}_1$ such unique metric with constant Chern scalar curvature, we have
 \begin{itemize}
  \item $S^{Ch}(\omega_p)=\Gamma_X(\{\omega\})<0$;
  \item $\mathcal{H}\mathrm{Isom}(X,\omega_p)\cong \mathcal{H}\mathrm{Isom}(X,\eta)\cong \mathcal{H}\mathrm{Conf}(X,\{\omega\})$, where $\eta\in\{\omega\}$ denotes the unique Gauduchon representative in $\{\omega\}$ with volume $1$.
 \end{itemize}
\end{thm}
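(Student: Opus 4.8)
The plan is to solve the Chern--Yamabe equation \eqref{eq:cy} by a continuity method. Fix the Gauduchon representative $\eta\in\{\omega\}$ of volume $1$ and set $S:=S^{Ch}(\eta)$ and $\lambda_0:=\Gamma_X(\{\omega\})<0$; note $\int_X S\,d\mu_\eta=\lambda_0$ since $\eta$ has volume $1$, and by Proposition \ref{prop:gamma-sign} the only admissible value for the target constant is $\lambda=\lambda_0$. Thus I look for $f\in\mathcal{C}^\infty(X;\R)$ solving $\Delta^{Ch}f+S=\lambda_0\exp(2f/n)$, using \eqref{eq:conf-change-sch}. I deform $S$ to its mean value: for $t\in[0,1]$ set $S_t:=(1-t)\lambda_0+tS$ and study $\Delta^{Ch}f+S_t=\lambda_0\exp(2f/n)$ in Hölder spaces $\mathcal{C}^{2,\alpha}$. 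At $t=0$ this reads $\Delta^{Ch}f+\lambda_0=\lambda_0\exp(2f/n)$, solved by $f\equiv0$, so the set $T\subseteq[0,1]$ of solvable parameters is nonempty; I then prove $T$ is open and closed.

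\textbf{Openness.} The linearization of $f\mapsto\Delta^{Ch}f+S_t-\lambda_0\exp(2f/n)$ at a solution is $Lv=\Delta^{Ch}v-\tfrac{2\lambda_0}{n}\exp(2f/n)\,v$, an elliptic operator whose index equals that of the Hodge--de Rham Laplacian, namely $0$, and whose zeroth order coefficient $-\tfrac{2\lambda_0}{n}\exp(2f/n)$ is strictly positive because $\lambda_0<0$. Evaluating $Lv=0$ at a maximum and at a minimum of $v$ and using the sign convention for $\Delta^{Ch}$ (recall Lemma \ref{lem:chern-laplacian}, the first order term dropping at critical points) forces $v\equiv0$; hence $\ker L=0$ and, being Fredholm of index $0$, $L$ is an isomorphism. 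The implicit function theorem then gives openness.

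\textbf{Closedness and a priori estimates.} The uniform upper bound is immediate from the maximum principle: at a maximum of $f$ one has $\lambda_0\exp(2f/n)=\Delta^{Ch}f+S_t\ge\min_X S_t$, and since $\min_X S_t\le\lambda_0<0$ this yields $\exp(2f/n)\le\min_X S_t/\lambda_0$, i.e. $f\le C_1$ uniformly in $t$. For the lower bound I would argue by comparison with the linear problem: since $\int_X(\lambda_0-S_t)\,d\mu_\eta=0$, there is a solution $w$ of $\Delta^{Ch}w=\lambda_0-S_t$, unique with $\int_X w\,d\mu_\eta=0$ and uniformly bounded in $t$ by Schauder estimates; then $\Delta^{Ch}(f-w)=\lambda_0(\exp(2f/n)-1)$, and evaluating at a minimum point of $f-w$ forces $\exp(2f/n)\ge1$, i.e. $f\ge0$ there, whence $f-w\ge-\sup_X|w|$ everywhere and $f\ge-C_2$ uniformly. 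With $\|f\|_{\mathcal{C}^0}$ controlled, elliptic (Schauder) bootstrapping gives uniform $\mathcal{C}^{2,\alpha}$ and hence $\mathcal{C}^\infty$ bounds, so Arzel\`a--Ascoli produces a limiting solution and $T$ is closed, giving $T=[0,1]$.

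\textbf{Uniqueness and isometries.} If $f_1,f_2$ both solve the equation with $\lambda_0<0$, then $\Delta^{Ch}(f_1-f_2)=\lambda_0(\exp(2f_1/n)-\exp(2f_2/n))$, and a maximum-principle comparison at the extrema of $f_1-f_2$ forces $f_1=f_2$; thus $\widetilde{\mathcal{C}h\mathcal{Y}a(X,\{\omega\})}=\{p\}$, with $S^{Ch}(\omega_p)=\lambda_0<0$ by Proposition \ref{prop:gamma-sign}. Finally, any $\phi\in\mathcal{H}\mathrm{Conf}(X,\{\omega\})$ sends $\omega_p$ to a metric in $\{\omega\}_1$ with constant Chern scalar curvature, hence to $\omega_p$ itself by uniqueness; combined with Lemma \ref{lem:CGG} this yields $\mathcal{H}\mathrm{Isom}(X,\omega_p)\cong\mathcal{H}\mathrm{Isom}(X,\eta)\cong\mathcal{H}\mathrm{Conf}(X,\{\omega\})$. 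The main obstacle in the whole scheme is the a priori $\mathcal{C}^0$ estimate, and within it the lower bound, which the comparison with the linear operator above resolves; the remainder is standard elliptic theory and the maximum principle, exactly mirroring the uniformization of curves of genus $g\ge2$.
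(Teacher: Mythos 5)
Your proof is correct and follows the same overall scheme as the paper: the identical continuity path $\Delta^{Ch}f+(1-t)\lambda_0+tS=\lambda_0\exp(2f/n)$ starting from $f\equiv 0$, openness via the maximum principle applied to the linearization $\Delta^{Ch}-\tfrac{2\lambda_0}{n}\exp(2f/n)$ together with the index-zero Fredholm property, closedness from uniform $\mathcal C^0$ bounds plus Schauder bootstrapping, and the same maximum-principle comparison for uniqueness. The one genuine divergence is the a priori \emph{lower} bound on $f$. The paper first performs a preliminary conformal change (solving a linear equation at the Gauduchon metric, as in Theorem \ref{thm:cy-zero-gaud}) so that the reference metric has $S^{Ch}(\omega)<0$ \emph{pointwise}; then $tS^{Ch}(\omega)(q)+\lambda(1-t)$ is a convex combination of two negative quantities and the bare maximum principle at a minimum of $f$ yields the lower bound directly. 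You instead keep the Gauduchon representative (whose Chern scalar curvature need not be negative everywhere) and recover the lower bound by subtracting the zero-mean solution $w$ of $\Delta^{Ch}w=\lambda_0-S_t$ and applying the maximum principle to $f-w$; this works because $\int_X(\lambda_0-S_t)\,d\mu_\eta=0$ puts the right-hand side in the image of $\Delta^{Ch}_\eta$ (again the Gauduchon condition), and $w_t=t\,w_1$ is uniformly bounded. The two devices are interchangeable: the paper's normalization makes both $\mathcal C^0$ bounds one-line maximum-principle statements at the cost of an extra linear solve up front, while your comparison function localizes the same linear solve inside the estimate and avoids changing the reference metric. Everything else — including the deduction of $\mathcal{H}\mathrm{Isom}(X,\omega_p)\cong\mathcal{H}\mathrm{Conf}(X,\{\omega\})$ from uniqueness and Lemma \ref{lem:CGG} — matches the paper.
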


\begin{proof}
Fix $\eta\in\{\omega\}$ the unique Gauduchon representative in $\{\omega\}$ with volume $1$. By assumption, we have
$$ \Gamma_X(\{\omega\}) = \int_X S^{Ch}(\eta) d\mu_\eta < 0 \;. $$

\medskip

A {\bfseries preliminary step} consists in showing that, without loss of generality, {\em we may assume that $S^{Ch}(\omega)<0$ at every point}.
In fact, we choose a representative in $\{\eta\}$ as follows. Fix $\eta$ as reference metric. Consider the equation
\begin{align}
 \Delta^{Ch}_\eta f = -S^{Ch} (\eta) + \int_X S^{Ch}(\eta) d\mu_{\eta} \: .
\end{align}
Since $\eta$ is Gauduchon, arguing as in the proof of Theorem \ref{thm:cy-zero-gaud}, the above equation has a solution $f\in\mathcal{C}^\infty(X;\R)$; such a solution is unique if we require $\int_X \exp(2f/n)d\mu_\eta=1$.
Then $\exp(2f/n)\eta\in\{\omega\}_1$ satisfies
\begin{eqnarray*}
 S^{Ch} (\exp(2f/n)\eta) &=&  \exp(-2f/n) \left(S^{Ch}(\eta)+\Delta^{Ch}f\right) \\[5pt]
 &=& \exp(-2f/n) \int_X S^{Ch}(\eta) d\mu_{\eta} \\[5pt]
 &=& \exp(-2f/n) \Gamma_X(\{\omega\}) < 0 \; .
\end{eqnarray*}

\medskip

The next step consists in applying, under this further assumption on the reference metric, the {\bfseries continuity method to prove existence} of a constant Chern scalar curvature metric in $\{\omega\}$ of class $\mathcal{C}^{2,\alpha}$, and then to obtain smooth solutions by regularity.

We set up the following {\bfseries continuity path}.
We fix $\lambda=\Gamma_X(\{\omega\})$, and fix the above metric $\omega$ with $S^{Ch}(\omega)<0$ as reference metric in the conformal class.
Consider the equation, varying $t\in[0,1]$:
$$ \mathrm{ChYa}(t,f) := \Delta^{Ch} f + t S^{Ch} (\omega)  - \lambda \exp(2f/n) + \lambda (1-t) = 0\; . $$
For $\alpha \in (0,\,1)$, we have the map
$$ \mathrm{ChYa} \colon [0,\,1]\times\mathcal{C}^{2,\alpha}(X ; \mathbb{R}) \rightarrow  C^{0,\alpha} (X ; \mathbb{R}) \;. $$
Let us define
$$ T:= \left\{ t\in [0,\, 1] \;\middle|\;  \exists f_t \in C^{2 , \, \alpha} (X ; \mathbb{R}) \mbox{ such that } \mathrm{ChYa}(t,\, f_t)=0 \right\} \; . $$
Now, $0\in T$ since $\mathrm{ChYa}(0,0)=0$ as we can directly check. Thus, {\bfseries $T$ is non empty}.

\medskip

In order to show that {\bfseries $T$ is open} we argue as follows. The implicit function theorem guarantees that $T$ is open
as long as the linearization of $\mathrm{ChYa}$ with respect to the second variable is bijective. That is, we claim that, {\em for a fixed $t_0 \in T$, with corresponding solution $f_{t_0}$,
the operator
$$ D \colon C^{2 , \, \alpha} (X ; \mathbb{R}) \rightarrow C^{0 , \, \alpha} (X ; \mathbb{R}) $$
defined by
$$ v \mapsto Dv : = \Delta^{Ch} v - \lambda \exp\left(2f_{t_0}/n\right)\cdot 2v/n  $$
is bijective}.
We begin by showing the injectivity of the linear operator $D$.
Since $D$ is an elliptic operator, by standard regularity theory, we can use the ordinary maximum principle for functions in the kernel: if $v$ belongs to $\ker D$, then at a maximum point $p$ for $v$ there holds
\begin{align}
 - \lambda \exp(2f_{t_0}(p)/n)\cdot 2v(p)/n \leq 0 \, ,
\end{align}
and whence $v(p)\leq 0$. Similarly, at a minimum point $q$ for $v$, there holds $v(q)\geq 0$.
Whence, $\ker D = \{0\}$. 

To see that {$D$ is surjective}, we notice that, since the index of the linear operator $D$ is the same as the index of the Laplacian, (from whom $D$ differs by a compact operator,)
the injectivity directly implies surjectivity.

\medskip

Finally, we claim that {\bfseries $T$ is closed}. We first prove {uniform $L^\infty$ estimates of the solution}.
\begin{lem}\label{lem: C zero unif estimate}
 If $\{ t_n\} \subset T$ and $f_{t_n} \in\mathcal{C}^{2,\alpha}(X;\R)$ are such that $\mathrm{ChYa} (t_n , \, f_{t_n} ) = 0$ for any $n$, then there exists a positive constant $C_0$, depending only on $X$, $\omega$, $\lambda$,
 such that, for any $n$,
\begin{align}
 \|f_{t_n}\|_{L^\infty} \leq C_0 \; .
\end{align}
\end{lem}
\begin{proof}
 Consider the following equality
\begin{align}
 \Delta^{Ch} f_{t_n} + t_n S^{Ch} (\omega)  - \lambda \exp\left(2f_{t_n}/n\right) + \lambda (1-t_n) = 0 \, ,
\end{align}
and suppose that $p$ is a maximum point for $f_{t_n}$. Then, at $p$, there holds
\begin{align}
 - \lambda \exp\left(2f_{t_n}(p)/n\right) \leq -t_n S^{Ch} (\omega)(p) - \lambda (1-t_n) \leq -\left( \max_X S^{Ch} (\omega) \right) -\lambda \, ,
\end{align}
(recall that $\lambda < 0$ and that $S^{Ch} (\omega)$ is a negative function).
Similarly, at a minimum point for $f_{t_n}$, say $q$, there holds
\begin{eqnarray*}
  - \lambda \exp\left(2f_{t_n}(q)/n\right) &\geq& -t_n S^{Ch} (\omega)(q) - \lambda (1-t_n) \geq t_n (-S^{Ch}(\omega)(q) +\lambda) -\lambda\\[5pt]
  &\geq& \min \left\{ \min_X \left(-S^{Ch}(\omega)\right) ,\, -\lambda  \right\} >0\; .
\end{eqnarray*}
The above estimates provide the wanted uniform constant $C_0$.  
\end{proof}
Now, consider the equality
$$ L_n f_{t_n} = \lambda \exp\left(2f_{t_n}/n\right) \, ,$$
where
$$ L_n f : = \Delta^{Ch} f + t_n S^{Ch} (\omega) + \lambda (1-t_n) $$
is regarded as an elliptic operator. The estimate of Lemma \ref{lem: C zero unif estimate} gives a uniform $L^\infty$ control of the right-hand side $\lambda \exp\left(2f_{t_n}/n\right)$ of the equation. Then, by iterating the Calderon-Zygmund inequality and using Sobolev embeddings, we find, let us say, 
an a-priori $\mathcal{C}^{3}$ uniform bound. Thus by the Ascoli-Arzel\`a theorem, we can take a converging subsequence to a solution of the equation, and conclude that {\em the set of parameters $t$ for which the equation $\mathrm{ChYa} (t,\, \sspace )$ admits a $\mathcal{C}^{2,\alpha}$ solution is closed in the Euclidean topology}.

\medskip

So far we accomplished the existence of a $\mathcal{C}^{2,\alpha}$ solution $f$ to the Chern-Yamabe equation, $\mathrm{ChYa}(1,f)=0$. 
We are left to prove the {\bfseries smooth regularity of the solution}. But this follows by the usual bootstrap argument via Schauder's estimates. 

\medskip

Now we turn to the {\bfseries uniqueness} issue.
Suppose that $\omega_1=\exp(2f_1/n)\omega\in\{\omega\}$ and $\omega_2=\exp(2f_2/n)\omega \in \{\omega\}$ have constant Chern scalar curvature equal to $\lambda_1<0$ and $\lambda_2<0$ respectively.
Hence we have the equations
$$
\Delta^{Ch}_{\omega} f_1 + S^{Ch}(\omega) = \lambda_1 \exp \left( 2f_1/n \right)
\qquad \text{ and } \qquad
\Delta^{Ch}_{\omega} f_2 + S^{Ch}(\omega) = \lambda_2 \exp \left( 2f_2/n \right) \;.
$$
Then we get the equation
$ \Delta^{Ch}_{\omega} (f_1-f_2) = \lambda_1 \exp \left( 2f_1/n \right) - \lambda_2 \exp \left( 2f_2/n \right)$.
At a maximum point $p$ for $f_1-f_2$, we find
$f_1 (p) - f_2 (p) \;\leq\; n\log\left( \lambda_2/\lambda_1\right)/2$.
Similarly, at a minimum point $q$, we have
$f_1 (q) - f_2 (q) \;\geq\; n\log\left( \lambda_2/\lambda_1\right)/2$.
Thus
$$ f_1 = f_2 + n\log\left( \lambda_2/\lambda_1\right)/2 \;.$$
By considering now our normalization condition, we get the uniqueness.
The last statements follow now from Proposition \ref{prop:gamma-sign} and the assumption, and from Lemma \ref{lem:CGG} and the uniqueness.
\end{proof}

By combining Theorems \ref{thm:cy-zero-gaud} and \ref{thm:cy-neg-gaud} with Proposition \ref{prop:kod-gauddegree}, we obtain the following corollary.

\begin{cor}\label{cor:kod-non-negative}
 Let $X$ be a compact complex manifold with non-negative Kodaira dimension. Then, for any $\{\omega\}\in\mathcal{C}^H_X$, it holds $\widetilde{\mathcal{C}h\mathcal{Y}a(X,\{\omega\})} =\{p\}$.
\end{cor}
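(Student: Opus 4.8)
The plan is to reduce the corollary to the two existence-and-uniqueness theorems already proved, by a short case analysis on the sign of the Gauduchon degree. The decisive point is that the hypothesis $\mathrm{Kod}(X)\geq 0$ forces $\Gamma_X(\{\omega\})$ into exactly the non-positive regime covered by Theorems \ref{thm:cy-zero-gaud} and \ref{thm:cy-neg-gaud}, independently of the chosen conformal class.

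First I would invoke Proposition \ref{prop:kod-gauddegree} to exclude positive Gauduchon degree. When $\mathrm{Kod}(X)>0$, that proposition yields $\Gamma_X(\{\omega\})<0$ for every $\{\omega\}\in\mathcal{C}^H_X$. When $\mathrm{Kod}(X)=0$, it yields $\Gamma_X(\{\omega\})<0$ for every class, except when $K_X$ is holomorphically torsion, in which case $\Gamma_X(\{\omega\})=0$. In all cases one concludes $\Gamma_X(\{\omega\})\leq 0$, uniformly over the choice of class.

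Next I would split according to this sign. If $\Gamma_X(\{\omega\})=0$, then Theorem \ref{thm:cy-zero-gaud} applies and gives $\widetilde{\mathcal{C}h\mathcal{Y}a(X,\{\omega\})}=\{p\}$. If instead $\Gamma_X(\{\omega\})<0$, then Theorem \ref{thm:cy-neg-gaud} applies and delivers the same conclusion. Since these two subcases are exhaustive under the standing hypothesis, the equality $\widetilde{\mathcal{C}h\mathcal{Y}a(X,\{\omega\})}=\{p\}$ holds for every $\{\omega\}$, which is the assertion.

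There is no genuine analytic obstacle at this stage: the hard work — solving the linear elliptic equation in the zero-degree case and running the continuity method with the $L^\infty$ estimate and maximum-principle uniqueness in the negative-degree case — has already been completed in the cited theorems. The only step that requires any attention is checking that the dichotomy is exhaustive, namely that non-negative Kodaira dimension truly rules out the positive Gauduchon-degree case; this is precisely the content of Proposition \ref{prop:kod-gauddegree}, so the corollary is a formal consequence of the three cited results.
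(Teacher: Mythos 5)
Your proposal is correct and follows exactly the paper's argument: the corollary is stated there as an immediate combination of Proposition \ref{prop:kod-gauddegree} (which forces $\Gamma_X(\{\omega\})\leq 0$ under $\mathrm{Kod}(X)\geq 0$) with Theorems \ref{thm:cy-zero-gaud} and \ref{thm:cy-neg-gaud}. Your case split on the sign of the Gauduchon degree is exhaustive and matches the intended deduction.
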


\section{Towards the case of positive Gauduchon degree}\label{sec:pos-gaud}

In this last section we give some remarks on the behaviour of the Chern-Yamabe problem in the case of \emph{positive} Gauduchon degree. As we mentioned in the Introduction, it is evident that many of the good analytical properties of the Chern-Yamabe equation disappear for $\Gamma_X(\{\omega\})>0$ (mostly the ones related to the maximum principle). In principle, it may be as well that the ``naive'' Chern-Yamabe conjecture \ref{conj:cy} does not hold for every conformal class. In any case, it seems interesting to find some sufficient and, possibly, necessary conditions which ensure the existence of metrics with positive constant Chern scalar curvature.

After showing some easy examples which show that (non-K\"ahler) metrics with constant positive Chern scalar exist, we discuss a couple of analytic observations that may be useful for the general study of the problem. Finally we give a ``sufficient'' criterion for the existence of positive constant Chern scalar curvature metrics (based on deformations of flat Chern-scalar metric).

\subsection{Elementary examples}

As we discussed, conformal classes having positive Gauduchon degrees could exist only on manifolds with Kodaira dimension equal to $-\infty$. If it happens that there exists at least one non-trivial antiplurigenera, then \emph{all} the Gauduchon degrees are positive. One of the easiest (non-K\"ahler) manifold having such property is the standard Hopf surface, and indeed in this case one can easily find a metric with constant positive Chern scalar curvature.
\begin{exa}[Hopf surface]\label{ex:hopf}
 Consider the Hopf surface $X=\left. \C^2\setminus\{0\} \middle\slash G \right.$ where $G=\left\langle z\mapsto z/2 \right\rangle$. It is diffeomorphic to $\mathbb{S}^1\times\mathbb{S}^3$ and it has Kodaira dimension $\mathrm{Kod}(X)=-\infty$. The standard Hermitian metric $\omega(z) = \frac{1}{|z|^2} \omega_0$, where $\omega_0$ is the flat Hermitian metric on $\C^2$, has constant positive Chern scalar curvature. 
\end{exa}

The above example can be easily generalized in other similar situations.

By blowing-up an Hopf surface one obtains examples where the Gauduchon degrees can take either positive, or zero, or negative values \cite{teleman}. We remark that it is not true that every manifold having negative Kodaira dimension admits conformal classes with positive Gauduchon degree, as the following example shows.

\begin{exa}[Inoue surfaces]\label{ex:inoue}
 Let us consider an Inoue surface $X$. It is a compact complex surface in class VII, hence it has $\mathrm{Kod}(X)=-\infty$ and $b_1=1$. In \cite[Remark 4.2]{teleman} it is shown that $\Gamma_X(\{\omega\})<0$ for any $\{\omega\}\in\mathcal{C}^H_X$. 
 
 In particular, combining the above with Theorem \ref{thm:cy-neg-gaud}, we obtain examples of manifolds with negative Kodaira dimension where all conformal classes admit metrics of (negative) constant Chern scalar curvature.
\end{exa}

In by-hand-constructed examples, (which are typically very symmetric,) it usually happens that the metrics are of Gauduchon type. It is then interesting to see that non-Gauduchon metrics with positive constant Chern scalar curvature actually exist. For an easy construction of such manifolds, it is sufficient to take the product of $\C\P^1$ with ``the'' standard round metric $\omega_{FS}$, scaled by a small factor $\epsilon$, with a \emph{non}-Gauduchon solution $(X,\omega)$ of the Chern-Yamabe problem for negative Gauduchon degree, (for example take any non-cscK K\"ahler metric on a manifold $X$ with non-negative Kodaira dimension and apply our previous Theorems to find $\omega$). Then it is immediate to check that $(\C\P^1\times X, \epsilon \mathrm{pr}_1^\ast(\omega_{FS}) + \mathrm{pr}_2^\ast(\omega))$ has constant \emph{positive} Chern scalar curvature provided $\epsilon$ is small enough, but it is not Gauduchon.

\subsection{Chern-Yamabe equation as an Euler-Lagrange equation}

In order to attack the problem when the Gauduchon degree is positive, it may be useful to see if our equation 
\begin{equation}\label{eq}\tag{\ref{eq:cy}}
\Delta^{Ch}f + S=\lambda \exp(2f/n) \;,
\end{equation}
where $S:=S^{Ch}(\omega)$, can be written as an Euler-Lagrange equation of some functional, with multiplier (and then try to apply direct methods to converge to a minimizer). However, as we are now going to show, in general such nice property does not hold. For this, let us say that a Hermitian conformal class $\{\omega\}$ is called \emph{balanced} if its Gauduchon representative $\eta$ is balanced, i.e., $d\eta^{n-1}=0$. Note that, if $\eta$ is balanced, then $\Delta^{Ch}_{\eta}=\Delta_{d,\eta}$ on smooth functions. We denote the set of such conformal classes as $\mathcal{C}_X^b \subseteq \mathcal{C}_X^H$.

\begin{prop}\label{prop:variational-form}
Let $X$ be a compact complex manifold of complex dimension $n$ and let $\{\omega\}\in\mathcal{C}^H_X$.
The $1$-form on $\mathcal{C}^\infty(X; \R)$
$$\alpha\colon h\longmapsto\int_X h\cdot(df,\theta)_\omega d\mu_\omega$$
is never closed, beside the case when it is identically zero, which happens only if $\omega$ is balanced. 

It follows that equation \eqref{eq} can be seen as an Euler-Lagrange equation (with multiplier) for the standard $L^2$ pairing if and only if $\{\omega\} = \{\eta\} \in \mathcal{C}_X^b$ with $\eta$ balanced. In this case, the functional takes the form
\begin{equation}\label{eq:el-functional}
\mathcal{F}(f) := \frac{1}{2}\int_X |d f|_\eta^2 d\mu_\eta + \int_X S^{Ch}(\eta) f d\mu_\eta \;,
\end{equation}
subject to the integral constraint

\begin{equation}\label{eq:el-constraint}
\int_X \exp(2f/n)d\mu_\eta=1\;.
\end{equation}
\end{prop}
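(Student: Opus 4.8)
The plan is to prove the two assertions in tandem by computing the exterior derivative of the $1$-form $\alpha$ on the (flat, hence contractible) space $\mathcal{C}^\infty(X;\R)$, and then to recognize that the obstruction to $\alpha$ being closed is precisely the obstruction to the torsion term $(df,\theta)_\omega$ being the gradient of a functional. First I would view $\mathcal{C}^\infty(X;\R)$ as an affine space, so that at each "point" $f$ the tangent space is again $\mathcal{C}^\infty(X;\R)$, and a $1$-form assigns to each $f$ a linear functional $h\mapsto\alpha_f(h)$. The form in the statement, $\alpha_f(h)=\int_X h\cdot(df,\theta)_\omega\,d\mu_\omega$, is in fact independent of the base point in its dependence on $f$ only through $df$, so it is an affine (indeed linear in $f$) $1$-form; its exterior derivative is constant. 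Explicitly, for two constant variations $h,k$ the formula $d\alpha(h,k)=\partial_h\alpha(k)-\partial_k\alpha(h)$ gives
\begin{equation}
d\alpha(h,k)=\int_X k\cdot(dh,\theta)_\omega\,d\mu_\omega-\int_X h\cdot(dk,\theta)_\omega\,d\mu_\omega\;.
\end{equation}

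The key computation is then to integrate one of these terms by parts and compare. Using $\int_X k\,(dh,\theta)_\omega\,d\mu_\omega=-\int_X h\,\mathrm{div}_\omega(k\,\theta^\sharp)\,d\mu_\omega$ (with $\theta^\sharp$ the metric dual of $\theta$), and expanding the divergence via the Leibniz rule, one finds that the symmetric pieces cancel in the antisymmetrization and the surviving term is proportional to $\int_X hk\,(d^*\theta)\,d\mu_\omega$, so that
\begin{equation}
d\alpha(h,k)=\int_X h\,k\,\bigl(d^*\theta\bigr)\,d\mu_\omega\;.
\end{equation}
Since $d\alpha$ is a symmetric (not antisymmetric) expression in $h,k$, for $\alpha$ to be closed we need $d\alpha\equiv 0$ as a $2$-form, which forces $\int_X h\,k\,(d^*\theta)\,d\mu_\omega=0$ for all $h,k$, i.e.\ $d^*\theta\equiv 0$. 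But $\omega$ is already Gauduchon (we may take the normalized representative $\eta$, which satisfies $d^*\theta=0$ by definition), so the closedness condition collapses not to the Gauduchon condition but to the stronger requirement that $\alpha$ itself vanish. Testing $\alpha$ against $h\equiv 1$ and general $f$ shows $\alpha\equiv 0$ iff $(df,\theta)_\omega$ integrates to zero against every $h$, which (taking $f$ with $df$ suitably chosen, or $h=(\cdot,\theta)$-adapted) is equivalent to $\theta\equiv 0$, namely $\eta$ balanced. This yields the first assertion.

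For the variational reformulation, once $\eta$ is balanced we have $\Delta^{Ch}_\eta=\Delta_{d,\eta}$ by Lemma~\ref{lem:chern-laplacian}, so the torsion term disappears and equation \eqref{eq} becomes $\Delta_{d}f+S^{Ch}(\eta)=\lambda\exp(2f/n)$. I would then verify directly that $\mathcal F$ in \eqref{eq:el-functional} has first variation $\delta\mathcal F(f)[h]=\int_X\bigl(\Delta_d f+S^{Ch}(\eta)\bigr)h\,d\mu_\eta$, while the constraint \eqref{eq:el-constraint} has first variation $\frac{2}{n}\int_X\exp(2f/n)\,h\,d\mu_\eta$; the Lagrange multiplier condition $\delta\mathcal F=\mu\,\delta(\text{constraint})$ then reproduces \eqref{eq} with $\lambda=\frac{n}{2}\mu$ (after identifying multipliers), establishing the "if" direction. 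The "only if" direction is exactly the content of the first assertion: when $\theta\neq 0$ the nonvanishing, non-closed $1$-form $\alpha$ is the difference between the true gradient of the energy and the linear functional defining \eqref{eq}, so no potential functional can exist for the $L^2$ pairing.

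The main obstacle I expect is the integration-by-parts bookkeeping in establishing $d\alpha(h,k)=\int_X h\,k\,(d^*\theta)\,d\mu_\omega$ cleanly: one must be careful that the non-antisymmetric nature of the result is genuine (it reflects that $\alpha$ fails to be closed precisely because it is not the differential of a function unless it vanishes), and that the $\omega$-dependence of $d\mu_\omega$ and of $(\cdot,\cdot)_\omega$ does not introduce extra base-point dependence that would alter $d\alpha$. Identifying the exact condition "$\alpha\equiv 0 \iff \theta\equiv 0$" also requires a short argument that the bilinear pairing $(f,h)\mapsto\int_X h\,(df,\theta)_\omega\,d\mu_\omega$ is nondegenerate in $\theta$ whenever $\theta\neq 0$, which follows by choosing $f$ and $h$ to localize near a point where $\theta$ does not vanish.
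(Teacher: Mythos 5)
Your setup is the right one, and the second half of your argument (the Euler--Lagrange computation for balanced $\eta$, with the multiplier matching $\lambda$) is fine. The problem is the central displayed identity $d\alpha(h,k)=\int_X hk\,(d^*\theta)\,d\mu_\omega$: the left-hand side is antisymmetric in $(h,k)$ by construction, while the right-hand side is symmetric, so the two could only agree if both vanished identically --- and you yourself notice this tension (``$d\alpha$ is a symmetric (not antisymmetric) expression'') without resolving it. What integration by parts actually gives is
$\int_X k\,(dh,\theta)_\omega\,d\mu_\omega+\int_X h\,(dk,\theta)_\omega\,d\mu_\omega=\int_X hk\,(d^*\theta)\,d\mu_\omega$,
i.e.\ the term $\int_X hk\,(d^*\theta)\,d\mu_\omega$ is (twice) the \emph{symmetric} part of the bilinear form $B(h,k):=\int_X k\,(dh,\theta)_\omega\,d\mu_\omega$; it is exactly the piece that cancels in the antisymmetrization, and the correct identity is $d\alpha(h,k)=2B(h,k)-\int_X hk\,(d^*\theta)\,d\mu_\omega$. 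Because of this, your key deduction --- that closedness ``collapses to the stronger requirement that $\alpha$ itself vanish'' --- is asserted rather than proved, and the intermediate remark that ``$\omega$ is already Gauduchon (we may take the normalized representative $\eta$)'' is out of place: the proposition concerns an arbitrary representative $\omega$, and the Gauduchon condition must be \emph{derived} from closedness, not assumed.

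The gap is repaired exactly as in the paper. Closedness of $\alpha$ is the symmetry $B(h,k)=B(k,h)$ for all $h,k$. First take $h\equiv 1$: then $B(k,1)=\int_X (dk,\theta)_\omega\,d\mu_\omega=\int_X k\,(d^*\theta)\,d\mu_\omega$ must equal $B(1,k)=0$ for every $k$, so $d^*\theta=0$ and $\omega$ is Gauduchon. Next, for a Gauduchon metric one has $0=\int_X (d(hk),\theta)_\omega\,d\mu_\omega=B(h,k)+B(k,h)$, which combined with the symmetry forces $B\equiv 0$; since $k$ is arbitrary this gives $(dh,\theta)_\omega\equiv 0$ for every $h$, hence $\theta=0$, i.e.\ $\omega$ is balanced. (The converse, that $\theta=0$ makes $\alpha$ identically zero, is immediate, as you note.) With this two-step correction your argument coincides with the paper's proof; without it, the claim that closedness implies $\theta=0$ --- the whole content of the first assertion --- is not established.
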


Note that the Lagrange multiplier coincide with the Gauduchon degree of the balanced conformal class.

\begin{proof}
Note that $d\alpha_f(h,g)\equiv 0$ if and only if $\int_X h(dg,\theta) d\mu_\omega=\int_{X} g (dh,\theta)d\mu_\omega$, for all $h,g \in C^\infty(X;\R)$.
In particular, by taking $h$ to be a constant and $g$ arbitrary, we have, by integration by part, that $\theta$ must be co-closed (i.e., $\omega$ must be Gauduchon).
Now, since the integral $\int_X (d(gh),\theta)d\mu_\omega$ is always zero for a Gauduchon metric $\omega$, it follows that $\int_{X} g (dh,\theta)\omega^n=0$ for all $g,h$. Thus $\theta = 0$ as claimed.
\end{proof}

\begin{rmk}
We notice here some facts. 
\begin{itemize}
 \item The above proposition makes sense even in the case of negative Gauduchon degree. (As a natural question one may as whether the constant Chern-Scalar metric is an absolute minimum in this case.)
 \item Thanks to Lemma \ref{lem:CGG}, the functional $\mathcal{F}$ is ``gauge'' invariant, that is, $\mathcal{F}(f\circ \phi)=\mathcal{F}(f)$ for any $\phi\in\mathcal{H}\mathrm{Conf}(X,\{\omega\})$.
 \item Instead of working with constraints, one can instead use the following scale invariant normalization:
 $$\mathcal{F}^\ast(f):=\frac{\mathcal{F}(f)}{n} -\lambda \log \left( \int_X \exp(2f/n)d\mu_\eta \right).$$
 Its Euler-Lagrange equation is now given by:
 $$ \Delta^{Ch}f + S = \lambda \frac{\exp(2f/n)}{\left( \int_X \exp(2f/n)d\mu_\eta \right)},$$
 which essentially coincides with the constant Chern scalar curvature equation, since we can always normalize $\left( \int_X \exp(2f/n)d\mu_\eta \right)=1$. We remark that some care is needed when trying to define the functional $\mathcal{F}^\ast$ on some Sobolev spaces due to the exponential term involved in its expression.
 \item At a smooth critical point $f$ of $\mathcal{F}$ for $\lambda>0$, one gets (by applying Jensen's inequality): $$\mathcal{F}(f) \geq -\frac{1}{2}\int_X |d f|_\eta^2 d\mu_\eta ;$$ 
however, it is not clear if the functional is bounded below.
\end{itemize}

 \end{rmk}

\subsection{On Chern-Yamabe flow}

Another possible candidate to study the existence problem in the case of positive Gauduchon degree (but which makes of course sense in general) can be the \emph{Chern-Yamabe-flow}.

\begin{defi}\label{CYF}
Let $X$ be a compact complex manifold and let $\{\omega\}\in\mathcal{C}^H_X$ be a Hermitian conformal class.
Fix any $\omega \in \{\omega\}_1$ (e.g., the Gauduchon representative with unit volume) as reference metric in the class. We call \emph{Chern-Yamabe flow} the following parabolic equation:
\begin{equation}\tag{ChYa-f}
\frac{\partial f}{\partial t}=-\Delta_\omega^{Ch}f -S^{Ch}(\omega) +\lambda \exp{\left(\frac{2f}{n}\right)}\;.
\end{equation}
\end{defi}
Since the principal symbol of the Chern Laplacian coincides with the one of the standard Laplacian, it follows that the above flow always starts, given any smooth initial datum.

Few remarks are in place. First note that the constraint condition \eqref{eq:cy-constraint} is not preserved under the flow. Nevertheless, it seems interesting to note that, if we take as base point for the flow a \emph{balanced} Gauduchon representative $\eta$ and the initial value being equal to $f_0=0$, then, by a simple calculation, it follows that $\mathcal{F}(f_t)\leq 0$ for $t$ small enough. 

Finally, from a more analytic prospective, it may be interesting to see whether in the case of negative Gauduchon degrees the above Chern-Yamabe flow actually converges to the unique solution we constructed in the previous sections.

\subsection{On weak versions of the Chern-Yamabe problem}
On any compact complex manifold $X$ with some non-vanishing anti-plurigenera, the possible values of Gauduchon degree are always positive. In particular, at the current state of art, we do not even know whether on such manifolds a metric with (necessary) positive constant Chern curvature can actually exists for some conformal class (a kind of weak Chern-Yamabe conjecture). We hope to discuss in a subsequent paper sufficient "computable" conditions on $(X,\{\omega\})$ for the existence of constant Chern scalar curvature metrics. For the purpose of this note, we restrict ourself to show three elementary results along these lines. 

\medskip

The first result is about complex manifolds which admit both conformal classes with both zero and conformal classes with positive Gauduchon degrees.

\begin{prop}\label{PosDef} Let $X$ be a compact complex manifold and let $\eta_{s\in[0,1)}$ be a smooth path of Gauduchon metrics on $X$ such that the Gauduchon degree $\Gamma_X(\{\eta_s\})$ is equal to zero for $s=0$ and is positive for  $s>0$. Then constant Chern scalar curvature metrics, with positive small Chern scalar, exist in all conformal classes $\{\eta_s\}$ for  $s>0$ small enough.
\end{prop}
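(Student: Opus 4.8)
The plan is to produce the solutions for small $s>0$ by deforming, via the implicit function theorem, the zero Chern scalar curvature metric that Theorem \ref{thm:cy-zero-gaud} provides in the limiting class $\{\eta_0\}$. After rescaling the path by a smooth positive factor (which preserves the Gauduchon condition and the conformal classes), I may assume $\mathrm{Vol}_{\eta_s}=1$ for all $s$, so that $\eta_s$ is the volume-$1$ Gauduchon representative of $\{\eta_s\}$. At $s=0$ the linear equation $\Delta^{Ch}_{\eta_0} f_0 + S^{Ch}(\eta_0)=0$ has a solution $f_0$, unique under $\int_X \exp(2f_0/n)\,d\mu_{\eta_0}=1$, giving a metric $\omega_0:=\exp(2f_0/n)\eta_0$ with $S^{Ch}(\omega_0)\equiv 0$ and $\lambda_0=\Gamma_X(\{\eta_0\})=0$.

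First I would set up the smooth map
$$G\colon \R\times\mathcal{C}^{2,\alpha}(X;\R)\times\R \longrightarrow \mathcal{C}^{0,\alpha}(X;\R)\times\R,$$
$$G(s,f,\lambda):=\left(\Delta^{Ch}_{\eta_s} f + S^{Ch}(\eta_s) - \lambda\exp(2f/n),\; \int_X \exp(2f/n)\,d\mu_{\eta_s}-1\right),$$
which is smooth because the path $s\mapsto\eta_s$ is, and which vanishes at the base point $(0,f_0,0)$. The essential difficulty is that the naive linearization in $f$ alone, namely $\Delta^{Ch}_{\eta_0}$, is degenerate: being the Chern Laplacian of a Gauduchon metric, it has one-dimensional kernel and cokernel, both equal to the constants (compare the proof of Theorem \ref{thm:cy-zero-gaud}). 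Carrying $\lambda$ as an extra unknown together with the volume constraint is exactly what compensates this one-dimensional degeneracy.

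The crux is therefore to verify that the partial differential
$$D_{(f,\lambda)}G(0,f_0,0)\,(v,\mu)=\left(\Delta^{Ch}_{\eta_0} v - \mu\exp(2f_0/n),\; \tfrac{2}{n}\int_X \exp(2f_0/n)\,v\,d\mu_{\eta_0}\right)$$
is an isomorphism, where the $f$-derivative of $-\lambda\exp(2f/n)$ has dropped out because $\lambda_0=0$. For injectivity, integrating the first component against $d\mu_{\eta_0}$ and using $\int_X\Delta^{Ch}_{\eta_0}v\,d\mu_{\eta_0}=0$ for a Gauduchon metric forces $\mu\int_X\exp(2f_0/n)\,d\mu_{\eta_0}=\mu=0$; then $v$ is constant, and the second component kills it. For surjectivity I would choose $\mu$ so that $g+\mu\exp(2f_0/n)$ has zero $\eta_0$-average (possible since $\exp(2f_0/n)>0$), solve $\Delta^{Ch}_{\eta_0}v=g+\mu\exp(2f_0/n)$ up to a constant, and fix the constant to match the scalar equation. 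Equivalently, $D_{(f,\lambda)}G$ is Fredholm of index zero, so injectivity already gives bijectivity.

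With this isomorphism, the implicit function theorem produces a smooth curve $s\mapsto(f_s,\lambda_s)$ near $(f_0,0)$ solving $G(s,f_s,\lambda_s)=0$ for $|s|$ small, and the usual Schauder bootstrapping upgrades each $f_s$ to a smooth function. The metric $\omega_s:=\exp(2f_s/n)\eta_s\in\{\eta_s\}_1$ then has constant Chern scalar curvature $\lambda_s$, so Proposition \ref{prop:gamma-sign} identifies $\lambda_s=\Gamma_X(\{\eta_s\})$; by hypothesis this is positive for $s>0$, and by continuity $\lambda_s\to\lambda_0=0$, yielding the asserted small positive value. The main obstacle is precisely the degeneracy of the linearization at $s=0$; what makes the scheme work — in contrast to the maximum-principle arguments of the previous sections — is that invertibility is only required at the single base point $s=0$, so the positivity of $\lambda_s$ (where uniqueness and invertibility may genuinely fail) never enters the argument.
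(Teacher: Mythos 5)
Your argument is correct and follows essentially the same route as the paper, which merely sketches the proof as ``Theorem \ref{thm:cy-zero-gaud} plus the Implicit Function Theorem with the natural constraints'' and defers the details to the analogous setup of Theorem \ref{thm:small-lambda}; your explicit map $G$, the use of the auxiliary unknown $\lambda$ together with the volume constraint to absorb the one-dimensional kernel and cokernel of $\Delta^{Ch}_{\eta_0}$, and the identification $\lambda_s=\Gamma_X(\{\eta_s\})$ via Proposition \ref{prop:gamma-sign} are exactly the intended details.
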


\begin{proof}
 Denote the smooth function $\Gamma_X(\{\eta_s\})$ with $\Gamma_s$. The equation we want to solve is the usual $\Delta_{\eta_s}^{Ch}f+S^{Ch}(\eta_s)=\Gamma_s \exp(2f/n)$, for $s$ small enough. Then the statement follows immediately by using Theorem \ref{thm:cy-zero-gaud} and the Implicit Function Theorem for Banach manifolds after imposing the natural constraints (compare later in this section for more details). Note that $S^{Ch}\left(\exp(2f_s/n)\eta_s\right)=\Gamma_s>0$ for $s>0$.
\end{proof}

\medskip

The second result, which is a consequence and an example of the above proposition, says that given any compact complex manifold $X$, (eventually having all Gauduchon degrees positive,) it is sufficient to take the product $X\times\Sigma_g$ with some complex curve $\Sigma_g$ of genus $g \geq 2$ to have some conformal classes in the product admitting positive constant Chern scalar curvature metrics.

\begin{prop}\label{prop:pos-product-curve}
Let $X$ be a compact complex manifold of complex dimension $\dim_\C X \geq2$ endowed with a Hermitian conformal class $\{\omega\}$ with $\Gamma_X(\{\omega\})>0$. Then $X\times \Sigma_g$, with $\Sigma_g$ compact complex curve of genus $g\geq2$, admits  Hermitian conformal classes with metrics having positive constant Chern scalar curvature.
\end{prop}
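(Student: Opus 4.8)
The plan is to realize the desired conformal classes on $M:=X\times\Sigma_g$ as members of a one-parameter family to which Proposition \ref{PosDef} applies directly, delegating all the analysis to that proposition. Fix on $X$ the volume-$1$ Gauduchon representative $\eta_X\in\{\omega\}$, which by hypothesis satisfies $\Gamma_X(\{\omega\})=\frac{1}{(n-1)!}\int_X c_1^{BC}(K_X^{-1})\wedge\eta_X^{n-1}>0$ with $n=\dim_\C X$, and fix on $\Sigma_g$ the hyperbolic metric $\omega_{\Sigma}$ of constant negative curvature. For a parameter $t>0$ I would consider the product metric
$$\eta_t:=\mathrm{pr}_1^\ast\eta_X+t\,\mathrm{pr}_2^\ast\omega_{\Sigma}$$
on $M$, which has complex dimension $N:=n+1$.

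First I would verify that each $\eta_t$ is Gauduchon. Expanding and discarding the mixed powers that vanish for dimension reasons, one has $\eta_t^{N-1}=\eta_t^n=\mathrm{pr}_1^\ast(\eta_X^n)+n\,t\,\mathrm{pr}_1^\ast(\eta_X^{n-1})\wedge\mathrm{pr}_2^\ast\omega_{\Sigma}$. The first summand is $\partial\overline\partial$-closed because it is pulled back from the top degree on $X$; the second is $\partial\overline\partial$-closed because $\mathrm{pr}_2^\ast\omega_{\Sigma}$ is $d$-closed on the curve while $\partial\overline\partial\,\mathrm{pr}_1^\ast(\eta_X^{n-1})=\mathrm{pr}_1^\ast(\partial\overline\partial\,\eta_X^{n-1})=0$ since $\eta_X$ is Gauduchon on $X$. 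Hence $\partial\overline\partial\,\eta_t^{N-1}=0$ for all $t>0$.

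The computational core is to track the Gauduchon degree as a function of $t$. Using additivity of the anticanonical Bott-Chern class under products, $c_1^{BC}(K_M^{-1})=\mathrm{pr}_1^\ast c_1^{BC}(K_X^{-1})+\mathrm{pr}_2^\ast c_1^{BC}(K_{\Sigma_g}^{-1})$, and again keeping only the surviving wedge powers, a Fubini computation gives
$$\int_M c_1^{BC}(K_M^{-1})\wedge\eta_t^{N-1}=n\,t\Big(\int_X c_1^{BC}(K_X^{-1})\wedge\eta_X^{n-1}\Big)\Big(\int_{\Sigma_g}\omega_{\Sigma}\Big)+\Big(\int_X\eta_X^n\Big)\Big(\int_{\Sigma_g}c_1^{BC}(K_{\Sigma_g}^{-1})\Big).$$
Since rescaling a Gauduchon metric to unit volume only multiplies $\frac{1}{(N-1)!}\int_M c_1^{BC}(K_M^{-1})\wedge\eta_t^{N-1}$ by a positive constant, the sign of $\Gamma_M(\{\eta_t\})$ equals the sign of the right-hand side. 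The coefficient of $t$ is strictly positive because $\Gamma_X(\{\omega\})>0$, whereas the constant term is strictly negative because $\int_{\Sigma_g}c_1^{BC}(K_{\Sigma_g}^{-1})=\deg K_{\Sigma_g}^{-1}<0$ for $g\geq2$. Thus the affine function of $t$ vanishes at a unique $t_0>0$ and is positive for $t>t_0$.

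Setting $\eta_s:=\eta_{t_0+s}$ for $s\in[0,1)$ then produces a smooth path of Gauduchon metrics on $M$ with $\Gamma_M(\{\eta_0\})=0$ and $\Gamma_M(\{\eta_s\})>0$ for every $s>0$, so Proposition \ref{PosDef} immediately yields metrics of positive constant Chern scalar curvature in $\{\eta_s\}$ for all small $s>0$, which is the assertion. The only genuinely delicate points are the two bookkeeping steps: checking the Gauduchon condition for $\eta_t$, and above all the degree computation, where one must correctly identify which wedge powers survive on the factors and invoke $g\geq2$ precisely to force the negative constant term that makes the degree cross zero at a positive $t_0$; all the hard analysis is outsourced to Proposition \ref{PosDef}.
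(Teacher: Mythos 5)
Your proposal is correct and follows essentially the same route as the paper: the paper's proof (via the lemma following the proposition) also takes the family $\omega_\delta=\mathrm{pr}_1^\ast(\eta)+\delta\,\mathrm{pr}_2^\ast(\omega)$, checks it is Gauduchon, shows $\mathrm{sgn}(\Gamma_\delta)=\mathrm{sgn}\left(\Gamma_X(\{\eta\})\delta+4\pi(1-g)\right)$ by Fubini and Gauss--Bonnet (your constant term $\deg K_{\Sigma_g}^{-1}<0$ is the same quantity), and applies Proposition \ref{PosDef} at the zero-crossing $\delta_0=4\pi(g-1)\Gamma_X(\{\eta\})^{-1}$. Your handling of the volume normalization (sign unaffected by rescaling to the unit-volume Gauduchon representative) matches the paper's remark on the same point.
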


The proposition is a consequence of the following lemma.

\begin{lem} Let $X$ be a compact complex manifold of complex dimension $n\geq2$ endowed with a Gauduchon metric $\eta$ with $\Gamma_X(\{\eta\})>0$ and $\int_X \eta^n=1$. Let $\Sigma_g$ be a compact complex curve of genus $g\geq2$ equipped with a (K\"ahler) metric $\omega$ with $\int_{\Sigma_g} \omega=1$. Define the family of Hermitian products $(X\times \Sigma_g, \omega_\delta)$, where $\omega_\delta:=\mathrm{pr}_1^\ast(\eta)+\delta \mathrm{pr}_2^\ast(\omega)$. Then:
\begin{itemize}
 \item $\omega_\delta$ is a smooth path of Gauduchon metrics;
 \item the Gauduchon degree $\Gamma_\delta:=\Gamma(\{\omega_\delta\})$ is a smooth function satisfying $$\mathrm{sgn}\left(\Gamma_\delta\right)=\mathrm{sgn}\left(\Gamma_X(\{\eta\})\delta+4\pi(1-g)\right),$$
 where $\mathrm{sgn}$ denotes the sign function.
\end{itemize}
Thus, by Proposition \ref{PosDef}, it exists $\epsilon=\epsilon(X\times\Sigma_g,\eta,\omega)>0$ such that for all $\delta \in ( 4\pi(g-1)\Gamma_X(\{\eta\})^{-1},4\pi(g-1)\Gamma_X(\{\eta\})^{-1}+\epsilon)$  the conformal classes $\{\omega_\delta\}$ admit metrics with positive constant Chern scalar curvature.
\end{lem}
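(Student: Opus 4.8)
The plan is to establish the two bullet points in turn and then feed the outcome into Proposition \ref{PosDef}. Throughout I suppress the pullbacks and write $\eta,\omega$ for $\mathrm{pr}_1^\ast\eta,\mathrm{pr}_2^\ast\omega$ on the $(n+1)$-dimensional manifold $X\times\Sigma_g$; the only structural facts I exploit are that forms pulled back from different factors commute up to sign, and that, since $\Sigma_g$ has complex dimension one, $\omega\wedge\omega=0$ and $\beta\wedge\omega=0$ for every $(1,1)$-form $\beta$ pulled back from $\Sigma_g$. First I would check that $\omega_\delta=\eta+\delta\omega$ is a positive $(1,1)$-form for $\delta>0$, depending smoothly (indeed affinely) on $\delta$, and that it is Gauduchon, i.e.\ $i\partial\overline\partial\,\omega_\delta^{\,n}=0$. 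Expanding $\omega_\delta^{\,n}=\eta^n+n\delta\,\eta^{n-1}\wedge\omega$ (all higher powers of $\omega$ vanish), the term $\eta^n$ is pulled back from $X$ and is $d$-closed there as a top-degree form, while $i\partial\overline\partial(\eta^{n-1}\wedge\omega)=\big(i\partial\overline\partial\,\eta^{n-1}\big)\wedge\omega=0$ because $\eta$ is Gauduchon on $X$ and $\omega$ is $d$-closed of top degree on $\Sigma_g$. This settles the first bullet.

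Next I would compute the Gauduchon degree. Since $\Gamma_\delta$ is a conformal invariant and $\omega_\delta$ is already Gauduchon, $\Gamma_\delta$ equals a strictly positive (volume-normalizing) multiple of $I(\delta):=\int_{X\times\Sigma_g}c_1^{BC}(K_{X\times\Sigma_g}^{-1})\wedge\omega_\delta^{\,n}$, so it suffices to determine the sign of $I(\delta)$. Using $K_{X\times\Sigma_g}^{-1}=\mathrm{pr}_1^\ast K_X^{-1}\otimes\mathrm{pr}_2^\ast K_{\Sigma_g}^{-1}$, hence $c_1^{BC}(K_{X\times\Sigma_g}^{-1})=\rho_X+\rho_{\Sigma_g}$ with $\rho_X:=c_1^{BC}(K_X^{-1})$ and $\rho_{\Sigma_g}:=c_1^{BC}(K_{\Sigma_g}^{-1})$, I would expand $(\rho_X+\rho_{\Sigma_g})\wedge(\eta^n+n\delta\,\eta^{n-1}\wedge\omega)$ into four terms. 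Two of them vanish for bidegree reasons ($\rho_X\wedge\eta^n$ is of type $(n+1,n+1)$ on $X$, and $\rho_{\Sigma_g}\wedge\omega$ of type $(2,2)$ on $\Sigma_g$), and Fubini applied to the surviving two, together with $\int_X\eta^n=1$ and $\int_{\Sigma_g}\omega=1$, leaves
\begin{equation*}
I(\delta)=n\delta\Big(\int_X\rho_X\wedge\eta^{n-1}\Big)+\int_{\Sigma_g}\rho_{\Sigma_g}.
\end{equation*}
Here the first factor is a positive constant multiple of $\Gamma_X(\{\eta\})$ by the definition of the Gauduchon degree, while Gauss–Bonnet gives $\int_{\Sigma_g}\rho_{\Sigma_g}=\int_{\Sigma_g}\mathrm{Ric}=2\pi\chi(\Sigma_g)=4\pi(1-g)$.

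Thus $I(\delta)$, and hence $\Gamma_\delta$, is (up to a positive factor) affine in $\delta$ with positive slope proportional to $\Gamma_X(\{\eta\})>0$ and negative intercept $4\pi(1-g)$; once the normalization constants are matched this is exactly the asserted $\mathrm{sgn}(\Gamma_\delta)=\mathrm{sgn}(\Gamma_X(\{\eta\})\delta+4\pi(1-g))$, and it exhibits a unique positive threshold $\delta_0$ with $\Gamma_{\delta_0}=0$ and $\Gamma_\delta>0$ just beyond it. To conclude, I would reparametrize by $s\mapsto\omega_{\delta_0+s}$: this is a smooth path of Gauduchon metrics with $\Gamma=0$ at $s=0$ and $\Gamma>0$ for small $s>0$, so Proposition \ref{PosDef} produces metrics of positive constant Chern scalar curvature in $\{\omega_\delta\}$ for all $\delta$ slightly above $\delta_0$. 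The main obstacle is the degree computation in the middle step: one must justify the additive splitting of $c_1^{BC}$ on the product, track the bidegree vanishings and the Fubini factors, and reconcile the volume conventions and the Gauss–Bonnet normalization so that the two surviving contributions combine into precisely the stated sign condition. The Gauduchon verification and the final invocation of Proposition \ref{PosDef}, by contrast, are essentially formal once this computation is in place.
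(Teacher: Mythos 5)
Your argument is correct and follows essentially the same route as the paper's: the only cosmetic difference is that you compute the degree via $\int c_1^{BC}(K_{X\times\Sigma_g}^{-1})\wedge\omega_\delta^{\,n}$ with the splitting $c_1^{BC}=\rho_X+\rho_{\Sigma_g}$, whereas the paper equivalently writes $S^{Ch}(\omega_\delta)=S^{Ch}(\eta)+\delta^{-1}S(\omega)$ and integrates against $\omega_\delta^{\,n+1}$ --- both reduce to the same Fubini and Gauss--Bonnet computation, and the normalization constants you worry about do match, since the factor $n$ from the binomial expansion cancels against $\rho_X\wedge\eta^{n-1}=\tfrac{1}{n}S^{Ch}(\eta)\,\eta^{n}$. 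Your Gauduchon verification and the invocation of Proposition~\ref{PosDef} at the threshold $\delta_0=4\pi(g-1)\Gamma_X(\{\eta\})^{-1}$ coincide with the paper's.
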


\begin{proof}
 Abusing of notation, let us write $\omega_\delta=\eta+\delta \omega$. It follows by dimension reasons that $\omega_\delta^n=\eta^n+\delta n \eta^{n-1}\wedge\omega$. Thus $i\partial \bar\partial \omega_\delta^n=0$, i.e., $\omega_\delta$ is a smooth family of Gauduchon metrics. Observe that $\omega_\delta$ is not normalized, in particular  the quantity $\int_{X\times\Sigma_g} S^{Ch}(\omega_\delta)\omega_{\delta}^{n+1}$  is only proportional, (by an explicitly computable smooth positive function of $\delta$,) but not equal to the Gauduchon degree. Thus
 $$ \mathrm{sgn}\left( \Gamma_\delta\right) \;=\; \mathrm{sgn}\left(\int_{X\times\Sigma_g}(S^{Ch}(\eta)+\delta^{-1}S(\omega))\delta(n+1)\eta^n\wedge\omega\right) \;=\; \mathrm{sgn}\left(\Gamma_X(\{\eta\})\delta+4\pi(1-g)\right) \;,$$
 where the last equality simply follows by Fubini and Gauss-Bonnet theorems. Since, for $\delta=4\pi(g-1)\Gamma_X(\{\eta\})^{-1}$, the Gauduchon degree $\Gamma_\delta$ vanishes, we can apply Proposition \ref{PosDef} to conclude the existence of positive constant Chern scalar curvature metrics for $\delta$ slightly bigger than the zero threshold.
\end{proof}

\medskip

Finally, the third result is another application of the Implicit Function Theorem, when Chern scalar curvature is "small"; this is in view of the case of small (possibly positive) Gauduchon degree.

\begin{thm}\label{thm:small-lambda}
 Let $X$ be a compact complex manifold of complex dimension $n$, and let $\{\omega\} \in \mathcal{C}^H_X$. Let $\eta\in\{\omega\}$ be the unique Gauduchon representative of volume $1$. There exists $\varepsilon>0$, depending just on $X$ and $\eta$, such that, if $\left\|S^{Ch}(\eta)\right\|_{\mathcal{C}^{0,\alpha}(X)}<\epsilon$, for some $\alpha\in(0,1)$, (in particular, this yields $\Gamma_X(\{\eta\})<\varepsilon$,) then there exists a constant Chern scalar curvature metric in $\{\eta\}_1$.
\end{thm}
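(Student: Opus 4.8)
The plan is to solve the Chern--Yamabe equation for the reference metric $\eta$ by the Implicit Function Theorem in Hölder spaces, viewing the curvature $S^{Ch}(\eta)$ as a small parameter and perturbing off the trivial solution at zero curvature. Concretely, for $\alpha\in(0,1)$ I would consider the map
\[ F\colon \mathcal{C}^{0,\alpha}(X;\R)\times\mathcal{C}^{2,\alpha}(X;\R)\times\R \longrightarrow \mathcal{C}^{0,\alpha}(X;\R)\times\R, \]
\[ F(S,f,\lambda) := \left( \Delta^{Ch}_\eta f + S - \lambda\exp(2f/n),\; \int_X \exp(2f/n)\,d\mu_\eta - 1 \right), \]
whose zeroes encode a solution of \eqref{eq:cy} together with the normalization \eqref{eq:cy-constraint} (here $g=0$, since $\eta$ is already the Gauduchon representative). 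Because $\eta$ has volume $1$, one checks directly that $F(0,0,0)=(0,0)$, so $(f,\lambda)=(0,0)$ solves the problem at $S\equiv 0$. The exponential nonlinearity acts smoothly as a Nemytskii operator between the relevant Hölder spaces, so $F$ is a smooth map of Banach spaces.

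Next I would linearize $F$ in $(f,\lambda)$ at $(S,f,\lambda)=(0,0,0)$. Writing $v$ and $\mu$ for the variations, a direct computation gives the bounded linear operator
\[ L(v,\mu) = \left( \Delta^{Ch}_\eta v - \mu,\; \tfrac{2}{n}\int_X v\,d\mu_\eta \right), \]
and the whole argument reduces to showing that $L\colon \mathcal{C}^{2,\alpha}(X;\R)\times\R \to \mathcal{C}^{0,\alpha}(X;\R)\times\R$ is an isomorphism. This is exactly where the Gauduchon hypothesis on $\eta$ enters: as in the proof of Theorem \ref{thm:cy-zero-gaud}, the condition $d^*\theta=0$ yields $\int_X \Delta^{Ch}_\eta v\,d\mu_\eta = 0$, while both $\ker\Delta^{Ch}_\eta$ and $\ker(\Delta^{Ch}_\eta)^*$ consist only of the constants.

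For injectivity, $L(v,\mu)=0$ means $\Delta^{Ch}_\eta v = \mu$ and $\int_X v\,d\mu_\eta=0$; integrating the first relation and using $\int_X\Delta^{Ch}_\eta v\,d\mu_\eta=0$ together with $\mathrm{Vol}_\eta=1$ forces $\mu=0$, whence $v$ is constant and then zero by the integral constraint. For surjectivity, given $(w,c)$ I would first set $\mu := -\int_X w\,d\mu_\eta$, so that $w+\mu$ is $L^2$-orthogonal to the constants $=\ker(\Delta^{Ch}_\eta)^*$; the Fredholm alternative then solves $\Delta^{Ch}_\eta v = w+\mu$ up to an additive constant, which I fix to arrange $\tfrac{2}{n}\int_X v\,d\mu_\eta=c$. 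Thus $L$ is bijective, hence an isomorphism by the open mapping theorem.

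The Implicit Function Theorem now produces a threshold $\varepsilon>0$ --- depending only on $X$ and $\eta$, through $\|L^{-1}\|$ and the local behaviour of $F$ --- and a solution $(f,\lambda)$ whenever $\|S^{Ch}(\eta)\|_{\mathcal{C}^{0,\alpha}}<\varepsilon$; elliptic bootstrapping via Schauder estimates upgrades $f\in\mathcal{C}^{2,\alpha}$ to $f\in\mathcal{C}^\infty$, so that $\exp(2f/n)\eta\in\{\eta\}_1$ has constant Chern scalar curvature $\lambda$. The parenthetical bound $\Gamma_X(\{\eta\})<\varepsilon$ is then immediate, since $\Gamma_X(\{\eta\})=\int_X S^{Ch}(\eta)\,d\mu_\eta\le\|S^{Ch}(\eta)\|_{\mathcal{C}^{0,\alpha}}$ because $\mathrm{Vol}_\eta=1$. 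The \emph{main obstacle} is precisely the isomorphism property of $L$: one must exploit the clean Fredholm structure that the Gauduchon normalization confers on $\Delta^{Ch}_\eta$ and correctly pair the Lagrange-multiplier slot $\mu$ against the volume constraint; the smoothness of the superposition operator and the regularity bootstrap are then routine.
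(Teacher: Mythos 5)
Your proof is correct and follows essentially the same route as the paper: an Implicit Function Theorem argument in H\"older spaces around the trivial solution $(f,\lambda)=(0,0)$ at $S\equiv 0$, with the Gauduchon condition $d^*\theta=0$ supplying the clean Fredholm structure of $\Delta^{Ch}_\eta$ (kernel and cokernel both the constants) and a final Schauder bootstrap. The only difference is bookkeeping of the multiplier: the paper treats $\lambda$ as a parameter alongside $S$, constrained by the compatibility condition $\int_X S\,d\mu_\eta=\lambda\int_X\exp(2u/n)\,d\mu_\eta$ built into the domain manifold and then set equal to $\Gamma_X(\{\eta\})$ when specializing to $S=S^{Ch}(\eta)$, whereas you solve for $(f,\lambda)$ jointly against the normalization $\int_X\exp(2f/n)\,d\mu_\eta=1$, which requires --- and you correctly verify --- that the block linearization $L(v,\mu)=\bigl(\Delta^{Ch}_\eta v-\mu,\ \tfrac{2}{n}\int_X v\,d\mu_\eta\bigr)$ is an isomorphism.
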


\begin{proof}
 Fix $\alpha\in(0,1)$. Consider the Banach manifolds of class $\mathcal{C}^1$
 \begin{eqnarray*}
 {\mathcal X} &:=& \left\{ (u,\lambda,S)\in \mathcal{C}^{2,\alpha}(X;\R) \times \R \times \mathcal{C}^{0,\alpha}(X;\R) \;\middle\vert\; \int_X S d\mu_\eta - \lambda \cdot \int_X \exp(2u/n) d\mu_\eta = 0 \right\} \;, \\[5pt]
 {\mathcal Y} &:=& \left\{ w \in \mathcal{C}^{0,\alpha}(X;\R) \;\middle\vert\; \int_X w d\mu_{\eta} = 0 \right\} \;.
 \end{eqnarray*}
 Consider the map of class $\mathcal{C}^1$ given by
 $$ F \colon {\mathcal X} \to {\mathcal Y} \;, \qquad F(u,\lambda,S) \;=\; \Delta^{Ch}_\eta u + S - \lambda\cdot \exp(2u/n) \;. $$
 Note that $F(0,0,0)=0$, and that
 $$ \left. \frac{\partial F}{\partial u} \right\lfloor_{(0,0,0)} \;=\; \Delta^{Ch}_{\eta} $$
 is invertible on $T_0\mathcal{Y}\simeq\mathcal{Y}$.
 
 Hence, by applying the implicit function theorem, there exists a neighbourhood $U$ of $(0,0)$ in $\R \times \mathcal{C}^{0,\alpha}(X;\R)$ and a $\mathcal{C}^1$ function $\left(\tilde u, \mathrm{id}\right) \colon U \to {\mathcal X}$ such that
 $$ F\left(\tilde u(\lambda,S),\lambda,S\right)\;=\;0 \;. $$
 
 Take $\varepsilon>0$, depending on $X$ and $\eta$, such that
 $$ \left\{ \lambda \in \R \;\middle\vert\; |\lambda|<\varepsilon \right\} \times \left\{ S \in \mathcal{C}^\infty(X;\R) \;\middle\vert\; \left\| S \right\|_{\mathcal{C}^{0,\alpha}(X;\R)} < \varepsilon \right\} \;\subset\; U \;. $$
 
 Let $\eta$ be such that $S^{Ch}(\eta)\in\mathcal{C}^\infty(X;\R)$ satisfies $\left\|S^{Ch}(\eta)\right\|_{\mathcal{C}^{0,\alpha}(X;\R)}<\varepsilon$. By taking $\lambda:=\Gamma_X(\{\eta\})$, (as expected by Proposition \ref{prop:gamma-sign},) we have also
 $$ \lambda \;=\; \Gamma_X(\{\eta\}) \;=\; \int_X S^{Ch}(\eta) d\mu_\eta \;\leq\; \left\| S^{Ch}(\eta) \right\|_{L^\infty(X)} \cdot \int_X d\mu_\eta \;<\; \varepsilon \;. $$
 Since $\left(\lambda, S^{Ch}(\eta)\right)\in U$, take $u:=\tilde u\left(\lambda,S^{Ch}(\eta)\right)\in\mathcal{C}^{2,\alpha}(X;\R)$ as above. By regularity, $u$ belongs in fact to $\mathcal{C}^\infty(X;\R)$. Then $\exp(2u/n)\eta \in \{\eta\}_1$ has constant Chern scalar curvature equal to $\lambda$.
\end{proof}

\begin{rmk}
In the previous statement, note the condition on the bound of $S^{Ch}(\eta)$ in terms of $\varepsilon$, where $\varepsilon$ depends by $\eta$ itself. Notwithstanding, we notice that the previous result is not empty: there are examples to which Theorem \ref{thm:small-lambda} can be applied. For example, take a compact complex manifold $X$ endowed with a Gauduchon metric $\eta$ having $S^{Ch}(\eta)=0$. Let $\varepsilon_\eta>0$ be the constant in Theorem \ref{thm:small-lambda}: it depends continuously on $\eta$. Consider a perturbation $\omega$ of $\eta$ in $\mathcal{C}^{2,\alpha}(X;T^*X\otimes T^*X)$, where $\alpha\in(0,1)$, such that $\|S^{Ch}(\omega)-S^{Ch}(\eta)\|_{\mathcal{C}^{2,\alpha}(X;\R)}<\varepsilon_\eta/4$ with $\Gamma_X(\{\omega\})>0$, 
and $|\varepsilon_\omega - \varepsilon_\eta |<\varepsilon_\eta/4$, where $\varepsilon_\omega$ is the constant in Theorem \ref{thm:small-lambda} associated to $\omega$. Then in particular $\|S^{Ch}(\omega)\|_{\mathcal{C}^{2,\alpha}(X;\R)}<\varepsilon_\omega$, and Theorem \ref{thm:small-lambda} applies, yielding a constant positive Chern scalar curvature metric in the conformal class of $\omega$.
\end{rmk}

\subsection{Non-uniqueness of solutions for positive Gauduchon degree}
In this section, we show that uniqueness does not hold, in general, in the case of positive Gauduchon degree.

\begin{prop}\label{prop:non-uniq-positive}
 In general, on a compact complex manifold $X$, constant Chern scalar curvature metrics in a fixed conformal class $\{\omega\}$ with positive Gauduchon degree are not unique, even up to the action of $\mathcal{G}_X(\{\omega\})$.
\end{prop}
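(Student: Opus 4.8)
The plan is to produce an explicit example on $X=\C\P^1\times\C\P^1\times\C\P^1$, along the lines indicated in the Introduction. Let $\omega_{FS}$ be the round Fubini--Study metric on $\C\P^1$, and for $\mathbf{a}=(a_1,a_2,a_3)\in(\R^+)^3$ set $\omega_{\mathbf{a}}:=\sum_{i=1}^3 a_i\,\mathrm{pr}_i^\ast\omega_{FS}$. Each $\omega_{\mathbf{a}}$ is K\"ahler, hence balanced and Gauduchon, and its Chern scalar curvature is the positive constant $S(\mathbf{a})=\sum_i c/a_i$, where $c=S^{Ch}(\omega_{FS})>0$; in particular $\Gamma_X(\{\omega_{\mathbf{a}}\})>0$ by Proposition~\ref{prop:gamma-sign}. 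Thus, for every $\mathbf{a}$, the product metric is itself a constant Chern scalar curvature representative of its class, giving a ``trivial'' branch of solutions. First I would organize these into a one-parameter family: choose a smooth path $\mathbf{a}(t)$ in scaling space preserving the total volume and varying the ratios, so that $f\equiv 0$ solves the Chern-Yamabe equation $\Delta^{Ch}_{\omega_{\mathbf{a}(t)}}f+S_t=\lambda_t\exp(2f/n)$ for all $t$, with $\lambda_t=S_t=\Gamma_X(\{\omega_{\mathbf{a}(t)}\})$ and $n=3$.

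Next I would set up the bifurcation problem. Regard
$$ G\colon \R\times\mathcal{C}^{2,\alpha}(X;\R)\to\mathcal{C}^{0,\alpha}(X;\R), \qquad G(t,f)=\Delta^{Ch}_{\omega_{\mathbf{a}(t)}}f+S_t-\lambda_t\exp(2f/n), $$
which has the trivial branch $G(t,0)=0$. Since each $\omega_{\mathbf{a}}$ is balanced, Lemma~\ref{lem:chern-laplacian} gives $\Delta^{Ch}_{\omega_{\mathbf{a}}}=\Delta_{d,\omega_{\mathbf{a}}}$ on functions, so the linearization along the trivial branch is the self-adjoint operator $D_t v:=\partial_f G(t,0)v=\Delta_{d,\omega_{\mathbf{a}(t)}}v-\tfrac{2\lambda_t}{n}v$. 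On the product, the spectrum of $\Delta_{d,\omega_{\mathbf{a}}}$ consists of the sums $\sum_i \mu_{k_i}/a_i$, where $\{\mu_k\}$ are the eigenvalues of the Laplacian of $(\C\P^1,\omega_{FS})$, while the potential $\tfrac{2\lambda_t}{n}=\tfrac{2}{n}\sum_i c/a_i$ moves independently. As $t$ varies, $D_t$ therefore becomes non-invertible precisely when $\tfrac{2\lambda_t}{n}$ meets an eigenvalue of $\Delta_{d,\omega_{\mathbf{a}(t)}}$, and at such a $t_0$ a kernel appears.

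I would then invoke the odd crossing number bifurcation theorem \cite[Theorem A]{westreich}. The analytic input is that $\{D_t\}$ is a smooth family of Fredholm operators of index $0$ (each differing from $\Delta_d$ by an order-zero, hence compact, perturbation), and that at $t_0$ the relevant finite-dimensional eigenvalue of $D_t$ crosses $0$ with odd total multiplicity and nonzero speed. When this holds, a continuum of nontrivial solutions $(t,f_t)$ with $f_t\to 0$ bifurcates from $(t_0,0)$. For $t$ on the appropriate side of $t_0$ one then has, simultaneously in the single conformal class $\{\omega_{\mathbf{a}(t)}\}$, both the product metric (from $f\equiv 0$) and the genuinely different metric $\exp(2f_t/n)\omega_{\mathbf{a}(t)}$, which yields the asserted non-uniqueness. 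That the two metrics are inequivalent under $\mathcal{G}_X(\{\omega\})$ follows because $f_t$ is non-constant, so the two are not related by a scaling, and because $f_t$ lies in a prescribed non-gauge eigenspace.

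The hard part will be controlling the gauge group. Because each factor carries the full conformal group $\mathrm{PGL}(2,\C)$, the lowest nonzero eigenvalue of $\Delta_{d,\omega_{FS}}$ on a single $\C\P^1$ has eigenfunctions that are exactly the conformal factors of infinitesimal M\"obius transformations; a crossing against such an eigenvalue is ``fake'', i.e.\ tangent to the orbit of $\mathcal{H}\mathrm{Conf}(X,\{\omega\})$, and produces only gauge copies of the product. To obtain a genuine bifurcation one must arrange the crossing at an eigenvalue whose eigenspace, after quotienting the directions generated by $\mathrm{PGL}(2,\C)^3$, is still odd-dimensional; this is where the three factors and the use of ``mixed'' eigenfunctions (products of eigenfunctions on distinct factors, which are not conformal Killing directions of $\omega_{\mathbf{a}}$) become essential. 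Concretely, I would restrict $G$ to a slice transverse to the gauge orbit, recompute the crossing number of the reduced linearization there, and verify that for a suitable path $\mathbf{a}(t)$ it is odd; establishing this transversal odd crossing is the crux of the argument.
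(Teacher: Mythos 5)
Your setup coincides with the paper's: the same manifold $\C\P^1\times\C\P^1\times\C\P^1$, the same trivial branch of scaled product Fubini--Study metrics, and the same plan of applying Westreich's odd-crossing bifurcation theorem to the linearization $\Delta_{\omega_{\mathbf a(t)}}-\tfrac{2\lambda_t}{n}$. However, the proposal stops exactly where the actual content of the proof begins: you never exhibit a parameter value at which the crossing has odd multiplicity, you only assert that one would ``arrange'' it. This is the heart of the matter: eigenspaces occurring in symmetric pairs (such as $V^{(2,1,0)}$ and $V^{(1,2,0)}$ when the first two scaling factors agree) contribute even dimensions, so oddness must be checked by hand. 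The paper takes $\omega_\lambda= V(\lambda)^{-1/3}\left(\pi_1^*\omega_{FS}+\pi_2^*\omega_{FS}+\lambda\,\pi_3^*\omega_{FS}\right)$, computes that at $\lambda_0=1/4$ the kernel of the linearization is spanned by $V^{(2,1,0)}\cup V^{(1,2,0)}\cup V^{(0,0,1)}$ of total dimension $15+15+3=33$, and then verifies the transversality condition of \cite[Theorem A]{westreich} by checking that $\left.\frac{\partial^2}{\partial f\partial\lambda}F\right\lfloor_{(0,\lambda_0)}$ acts on each kernel summand as a nonzero multiple of the identity, hence lands outside the image of the (self-adjoint) linearization. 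Without some such explicit computation the argument is incomplete.

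Moreover, the step you single out as the crux rests on a misidentification of the gauge group. By Lemma \ref{lem:CGG}, $\mathcal{H}\mathrm{Conf}(X,\{\omega\})$ is the \emph{compact} group of holomorphic isometries of the Gauduchon representative; for the product metric it is not $\mathrm{PGL}(2,\C)^3$. Indeed, if $\phi=(\phi_1,\phi_2,\phi_3)$ with $\phi_i^*\omega_{FS}=e^{2u_i}\omega_{FS}$, then $\phi^*\omega_{\mathbf a}=\sum_i a_i e^{2u_i(x_i)}\,\mathrm{pr}_i^*\omega_{FS}$ is a conformal multiple of $\omega_{\mathbf a}$ only if every $u_i$ is constant, hence only if each $\phi_i$ is an isometry. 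Consequently the gauge orbit of $\omega_\lambda$ inside $\{\omega_\lambda\}$ consists of scalings alone, the first-eigenfunction directions $V^{(0,0,1)}$ are \emph{not} tangent to any gauge orbit (the paper includes them in the count of $33$), and no reduction to a slice transverse to a M\"obius orbit is needed: any bifurcating solution $f\neq 0$ of mean zero is non-constant and therefore already yields a metric inequivalent to $\omega_\lambda$ under $\mathcal{G}_X(\{\omega_\lambda\})$. So the obstacle you flag does not exist, while the computation you defer is precisely what remains to be done.
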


\begin{proof}
 In \cite{dLPZ}, local rigidity and multiplicity of solutions for the classical Yamabe problem on, in particular, products of compact Riemannian manifolds are investigated. Notwithstanding, it is not clear to us whether the argument in \cite[Theorem 2.1]{smoller-wasserman}, on which \cite[Theorem A.2]{dLPZ} is founded, still holds true in our non-variational setting. Hence, we apply similar arguments by using instead a version of the Krasnosel'skii Bifurcation Theorem, see, e.g., \cite[Theorem II.3.2]{kielhofer}, proving bifurcation in case of odd crossing numbers. More precisely, we apply \cite[Theorem A]{westreich} by D. Westreich.

 \medskip
 
 Consider the compact complex manifold $X:=\C\P^1\times\C\P^1\times\C\P^1$ endowed with the path $\left\{g_\lambda\right\}_{\lambda\in(0,+\infty)}$ of Hermitian metrics, where
 $$ \omega_{\lambda} \;:=\; V(\lambda)^{-1/3} \cdot \left( \pi_1^*\omega_{FS} + \pi_2^*\omega_{FS} + \lambda \cdot \pi_3^*\omega_{FS} \right) \;; $$
 here, $\pi_j\colon X \to \C\P^1$ denotes the natural projection, and $\omega_{FS}$ is the Fubini and Study metric on $\C\P^1$; the constant $V(\lambda)=(4\pi)^3\lambda$ is chosen in such a way that $\mathrm{Vol}(\omega_\lambda)=1$.
 
 We consider the Banach spaces
 $$ \mathcal{D} := \left\{ f \in \mathcal{C}^{2,\alpha}(X;\R) \;\middle\vert\; \int_X f d\mu_{\omega_1}=0 \right\} \;, $$
 and
 $$ \mathcal{E} := \left\{ \varphi \in \mathcal{C}^{0,\alpha}(X;\R) \;\middle\vert\; \int_X \varphi d\mu_{\omega_1}=0 \right\} \;, $$
 (note that $d\mu_{\omega_\lambda}=\lambda V(\lambda)^{-1} d\mu_{\omega_1}$ for any $\lambda\in(0,+\infty)$,)
 and the smooth map
 $$ F \colon \mathcal{D} \times (0,+\infty) \to \mathcal{E} \;, \qquad F\left( f, \lambda \right) \;:=\; S^{Ch}\left(\exp(2f/3)\omega_\lambda\right) - \int_X S^{Ch}\left(\exp(2f/3)\omega_\lambda\right) d\mu_{\omega_\lambda} \;. $$
 Note that $\exp(2f/3)\omega_\lambda \in \{\omega_\lambda\}$ is constant Chern scalar curvature if and only if $F\left(f, \lambda\right)=0$.
 
 For any $\lambda\in(0,+\infty)$, we have that $F(0,\lambda)=0$, since $\omega_\lambda$ is cscK, with constant curvature $S(\lambda):=2 \cdot (2+\lambda^{-1}) \cdot V(\lambda)^{1/3}$. This gives condition {\itshape (a)} of \cite[Theorem A]{westreich}. Conditions {\itshape (b)} and {\itshape (c)} of \cite[Theorem A]{westreich} are guaranteed as in \cite[Remark at page 610]{westreich}.
 
 Consider
 $$ A(\lambda)\colon\mathcal{D}\to\mathcal{E}\; \qquad A(\lambda)[\beta] \;:=\; \left.\frac{\partial}{\partial f} F\right\lfloor_{(0,\lambda)} [\beta] \;=\; -\frac{2}{3}S(\lambda)[\beta]+\Delta_{\omega_\lambda}[\beta] \;, $$
 which is a closed Fredholm operator of index $0$, for any $\lambda\in(0,+\infty)$. In particular, we study the value
 $$ \lambda_0 \;:=\; 1/4 \;. $$
 
 For condition {\itshape (d)} of \cite[Theorem A]{westreich}, we prove that $\left.\frac{\partial}{\partial f} F\right\lfloor_{(0,\lambda_0)}$ is a Fredholm operator of index $0$ such that $\dim \ker \left.\frac{\partial}{\partial f} F\right\lfloor_{(0,\lambda_0)}$ is odd.  Indeed, recall that the eigenvalues of $\Delta_{\omega_{FS}}$ on $\C\P^1$ are $j(j+1)$ varying $j\in\N$, with multiplicity $2j+1$ respectively. We have hence that $\ker \left.\frac{\partial}{\partial f} F\right\lfloor_{(0,\lambda_0)}$ is generated by
 $$ V^{(2,1,0)} \cup V^{(1,2,0)} \cup V^{(0,0,1)} \;, $$
 where, meaning here below for $\beta$ a generic element of $\mathcal{C}^\infty(\C\P^1\times\C\P^1\times\C\P^1;\R)$,
 $$V^{(j_1,j_2,j_3)}:= \left\{\beta (p_1,\, p_2,\, p_3 ) := \beta_1 (p_1) \beta_2 (p_2) \beta_3 (p_3) \;\middle\vert\; \Delta^{(\ell)}\beta_\ell=j_\ell(j_\ell+1)\beta_\ell \text{ for } \ell\in\{1,2,3\}\right\},$$ and, with abuse of notation, $\Delta^{(\ell)}:=\pi_\ell^*\circ\Delta_{\omega_{FS}}\circ\iota_\ell^*$ denotes the Laplacian on the $\ell$-th factor $\C\P^1$, where $\pi_\ell\colon X \to \C\P^1$ and $\iota_\ell\colon \C\P^1\to X$ are the natural projection, respectively inclusion, of the $\ell$-th object $\C\P^1$ in $X$, for $\ell\in\{1,2,3\}$.  In fact, we have
 $$
 \dim \ker \left.\frac{\partial}{\partial f} F\right\lfloor_{(0,\lambda_0)}
 \;=\; \dim V^{(2,1,0)}+\dim V^{(1,2,0)}+\dim V^{(0,0,1)}
 \;=\; 15 + 15 + 3
 \;=\; 33 \;,
 $$
 which is odd.

 Compute
 $$ \left.\frac{\partial^2}{\partial f \partial \lambda} F\right\lfloor_{(0,\lambda_0)} [\beta] \;=\; -\frac{2}{3}S'(\lambda_0)[\beta] + \left.\frac{d}{d\lambda}\right\lfloor_{\lambda_0}\Delta_{\omega_\lambda}[\beta] \;. $$
 We now verify condition {\itshape (e)} of \cite[Theorem A]{westreich}.
 More precisely, we have to show that, if $\tilde v\in \ker \left.\frac{\partial}{\partial f} F\right\lfloor_{(0,\lambda_0)} \setminus \{0\}$, then $\left.\frac{\partial^2}{\partial f \partial \lambda} F\right\lfloor_{(0,\lambda_0)} [\tilde v] \not \in \mathrm{imm} \left.\frac{\partial}{\partial f} F\right\lfloor_{(0,\lambda_0)}$. 
 With the above notation, we have
 $$
 \left.\frac{\partial^2}{\partial f \partial \lambda} F\right\lfloor_{(0,\lambda_0)} [\beta] \;=\;
 \frac{4\pi}{3}\cdot 4^{2/3} \cdot \left( 8[\beta] + \Delta^{(1)}[\beta] + \Delta^{(2)}[\beta] - 8 \Delta^{(3)}[\beta] \right)
 \;.
 $$
 If $\tilde v \in  V^{(j_1,j_2,j_3)}$, with $(j_1,j_2,j_3)\in \{(2,1,0)\,,\,(1,2,0)\,,\,(0,0,1)\}$, then $\left.\frac{\partial^2}{\partial f \partial \lambda} F\right\lfloor_{(0,\lambda_0)} [\tilde v]$ is a non-zero constant multiple of $\tilde v$ itself, where the constants depend on $\lambda_0$ and $(j_1,j_2,j_3)$. In any case, $\tilde v \in \ker \left.\frac{\partial}{\partial f} F\right\lfloor_{(0,\lambda_0)} \perp \mathrm{imm} \left.\frac{\partial}{\partial f} F\right\lfloor_{(0,\lambda_0)}$, the operator $\left.\frac{\partial}{\partial f} F\right\lfloor_{(0,\lambda_0)}$ being self-adjoint. This proves the condition above.
  
 Hence, by \cite[Theorem A]{westreich}, we get that $(0,\lambda_0)$ is a bifurcation instant for the equation $F(f,\lambda)=0$. In particular, we get a non-trivial solution $\exp(2f/3)\omega_\lambda\in\{\omega_\lambda\}$ for some $\lambda$ near $\lambda_0$, whence non-uniqueness.

 Notice that the function $f \in \mathcal{D}$, being a solution of \eqref{eq:cy}, is actually a smooth function; moreover the metrics $\exp(2f/3)\omega_\lambda$ and $\omega_\lambda$ in $\{\omega_\lambda\}$ are not equivalent under the action of $\mathcal{H}\mathrm{Conf}(X, \{\omega_{\lambda}\}) \times \mathbb{R}^+$, since 
 $\mathcal{H}\mathrm{Conf}(X, \{\omega_{\lambda}\}) \cong \mathcal{H}\mathrm{Isom} (X, \omega_{\lambda})$ and, from the very definition of $\mathcal{D}$, $\exp(2f/3)\omega_\lambda$ cannot be a non-trivial scaling of $\omega_{\lambda}$.
 \end{proof}

\appendix

\section{Formulas for Chern Laplacian and Chern scalar curvature}\label{app:formulas}

For the sake of completeness, we collect in this section some explicit computations used to derive the expressions for the Chern Laplacian and the Chern scalar curvature. 

\renewcommand{\referenza}{\ref{lem:chern-laplacian}}
\begin{lem*}[{\cite[pages 502--503]{gauduchon-mathann}}]
 Let $X$ be a compact complex manifold of complex dimension $n$ endowed with a Hermitian metric $\omega$ with Lee form $\theta$. The Chern Laplacian on smooth functions $f$ has the form
 $$ \Delta^{Ch}f := \frac{1}{2} ( \omega ,\, dd^c f) = \Delta_d f + (df,\, \theta )_\omega \;. $$
\end{lem*}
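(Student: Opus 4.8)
The plan is to prove the identity by turning it into an equality of top-degree forms: multiplying through by the volume form $d\mu_\omega=\omega^n/n!$ converts each Laplacian into an expression built from wedge products with $\omega^{n-1}$, at which point the defining relation $d\omega^{n-1}=\theta\wedge\omega^{n-1}$ of the Lee form can be inserted directly. Everything then collapses to the Leibniz rule together with two standard pieces of Hermitian linear algebra. This is essentially Gauduchon's computation in \cite[pages 502--503]{gauduchon-mathann}, and the only genuinely Hermitian (non-K\"ahler) input is the single appearance of $d\omega^{n-1}=\theta\wedge\omega^{n-1}$, which is exactly what generates the Lee-form term.

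First I would record the two ``volume-form'' presentations. For the Chern Laplacian I would use the elementary identity $n\,\alpha\wedge\omega^{n-1}=(\mathrm{tr}_\omega\alpha)\,\omega^n$, valid for any real $(1,1)$-form $\alpha$; applied to $\alpha=i\bar\partial\partial f$, together with $\Delta^{Ch}f=2i\,\mathrm{tr}_\omega\bar\partial\partial f$ and $dd^c f=2i\,\partial\bar\partial f$, this gives
\[
\Delta^{Ch}f\cdot d\mu_\omega \;=\; -\,dd^c f\wedge\frac{\omega^{n-1}}{(n-1)!}\,.
\]
For the Hodge--de Rham Laplacian I would use that on functions $\Delta_d f=d^*df$ and that $d^*=-*\,d*$ on $1$-forms, so that $\Delta_d f\cdot d\mu_\omega=-\,d(*df)$. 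The bridge between the two is the standard Hermitian identity $*\,df=d^c f\wedge\frac{\omega^{n-1}}{(n-1)!}$ (see, e.g., \cite[\S1.10]{gauduchon-book}), which rewrites the last expression purely in terms of $\omega$.

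Then I would expand by Leibniz: since $d^c f$ is a $1$-form,
\[
d\!\left(d^c f\wedge\frac{\omega^{n-1}}{(n-1)!}\right)
= dd^c f\wedge\frac{\omega^{n-1}}{(n-1)!} - d^c f\wedge\frac{d\omega^{n-1}}{(n-1)!}\,,
\]
so that, substituting $d\omega^{n-1}=\theta\wedge\omega^{n-1}$, the expression $\Delta_d f\cdot d\mu_\omega=-d(*df)$ becomes exactly the Chern term $\Delta^{Ch}f\cdot d\mu_\omega$ plus the correction $\frac{1}{(n-1)!}\,d^c f\wedge\theta\wedge\omega^{n-1}$. To finish I would identify this correction: using again $d^c f\wedge\frac{\omega^{n-1}}{(n-1)!}=*df$, the anticommutation of the two $1$-forms, and the defining property $\alpha\wedge*\beta=(\alpha,\beta)_\omega\,d\mu_\omega$ of the Hodge star, one gets $d^c f\wedge\theta\wedge\frac{\omega^{n-1}}{(n-1)!}=-\,\theta\wedge*df=-(df,\theta)_\omega\,d\mu_\omega$. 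Assembling the pieces yields $\Delta_d f\cdot d\mu_\omega=\Delta^{Ch}f\cdot d\mu_\omega-(df,\theta)_\omega\,d\mu_\omega$, and dividing out the nowhere-vanishing volume form gives the pointwise statement $\Delta^{Ch}f=\Delta_d f+(df,\theta)_\omega$.

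The genuinely delicate point is not the structure of the argument, which is very short, but keeping every sign and normalization consistent across the several conventions in play: the sign of $d^c$ and of the $J$-action on $1$-forms, the sign of the Hodge star on $1$-forms in real dimension $2n$, the exact proportionality in $*df=d^c f\wedge\omega^{n-1}/(n-1)!$, and the geometer's convention $\Delta_d=d^*d\ge 0$ fixed at the start of Section~\ref{sec:prelim}. These are all routine to pin down but easy to get wrong, so I would calibrate them once on the flat model $(\C^n,\omega_0)$, where $\theta=0$ and the identity must degenerate to $\Delta^{Ch}f=\Delta_d f$, and then carry the fixed signs through the computation above.
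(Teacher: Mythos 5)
Your argument is correct, but it is genuinely different from the one in the paper. The paper's proof (Appendix \ref{app:formulas}) is a pointwise frame computation: it evaluates $(\omega,\,dd^cf)$ on a real orthonormal frame $\{e_i,Je_i\}$, expands $dd^cf(e_k,Je_k)$ by the Cartan formula, rewrites the bracket terms through the Chern connection using $\nabla^{Ch}J=0$ and the $J$-anti-invariance of $T^{Ch}$, compares with the Levi-Civita expression for $\Delta_d f$, and finally identifies the leftover as $(df,\theta)_\omega$ via the torsion-trace formula $\theta=\sum_j g(T^{Ch}(\sspace,v_j),v_j)$. Your route instead works at the level of top-degree forms: it never mentions a connection, uses the characterization $d\omega^{n-1}=\theta\wedge\omega^{n-1}$ of the Lee form directly (which is the paper's primary definition of $\theta$), and reduces everything to the Leibniz rule once the Weil-type identity $*df=d^cf\wedge\omega^{n-1}/(n-1)!$ is granted. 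What your approach buys is brevity and transparency — it makes visible that the discrepancy between the two Laplacians is created by exactly one application of $d\omega^{n-1}=\theta\wedge\omega^{n-1}$; what it costs is that the Hermitian linear algebra is outsourced to the identity $*\alpha=J\alpha\wedge\omega^{n-1}/(n-1)!$ (itself a pointwise frame computation of comparable weight to part of the paper's argument), plus the convention bookkeeping you rightly flag. Conversely, the paper's longer computation has the side benefit of reconciling the two characterizations of $\theta$ (via $d\omega^{n-1}$ and via the Chern torsion), which your proof does not need and does not reprove. Both are valid proofs of the stated identity; I have checked that your signs are consistent with the paper's conventions $\Delta_d=d^*d$ and $\Delta^{Ch}f=2i\,\mathrm{tr}_\omega\overline\partial\partial f$ (equivalently $d^c=i(\overline\partial-\partial)$), under which $\Delta^{Ch}f\,d\mu_\omega=-dd^cf\wedge\omega^{n-1}/(n-1)!$ and $\Delta_d f\,d\mu_\omega=-d(*df)$ hold as you assert.
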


\begin{proof}
 Denote by $h$ the Hermitian metric of corresponding $(1,1)$-form $\omega$. Consider the following pairing of $2$-forms:
$$ (\alpha ,\, \beta) = \sum_{A,B,C,D} h^{AB}h^{CD}\alpha_{AD}\beta_{CB} \, , $$
where the sub and superscripts range on a frame of $T^{\mathbb{C}}X$.

Now we fix $\alpha = \omega$ and $\beta = dd^c f$ and we consider two cases.

The first is when the frame $\{Z_j\}_j$ is a complex coordinates one. We claim that in this case
$$ (\omega , \, dd^c f) = -4 \sum_{i,j=1}^{n} h^{i\bar j} \partial_i \partial_{\bar j}f \; . $$
In fact we compute
\begin{eqnarray}\label{equa: ddc in the complex coordinate case}
 (\omega , \, dd^c f) &=& \sum_{i,j,k,\ell=1}^{n} h^{i\bar j}h^{k\bar \ell} h(JZ_i , Z_{\bar \ell})dd^c f (Z_k  , \, Z_{\bar j})\\[5pt]
\nonumber &&+\sum_{i,j,k,\ell=1}^{n} h^{\bar i j}h^{\bar k \ell} h( J Z_{\bar i} , Z_\ell)dd^c f (Z_{\bar k}  , \, Z_j)\\[5pt]
\nonumber &=&\sum_{k,j=1}^{n} \sqrt{-1} h^{k\bar j}dd^c f (Z_k  , \, Z_{\bar j})
+\sum_{j,k=1}^{n} (-\sqrt{-1} )h^{\bar k j} dd^c f (Z_{\bar k}  , \, Z_j)\\[5pt]
\nonumber &=&\sum_{k,j=1}^{n} 4\sqrt{-1}^2 h^{k\bar j} \partial_k \partial_{\bar j}f
\end{eqnarray}

The second case which we consider is that of a Hermitian $g$-orthonormal frame 
\begin{align}\label{equa: ortonormal frame}
 \{v_i\}_{i=1,\ldots, 2n} = \{e_i , Je_i\}_{i=1,\ldots, n}\; .
\end{align}
In this second case we claim that
$$ (\omega , \, dd^c f) = 2 \sum_{i=1}^{n} dd^c f (J e_i ,\, e_i)\;. $$
In fact, by definition
\begin{eqnarray*}
(\omega , \, dd^c f)&=& \sum_{a,b=1}^{2n} \omega (v_a , \, v_b) dd^c f (v_b , v_a)   \\[5pt]
      \nonumber&=& \sum_{a,b=1}^{2n} h (J v_a , \,  v_b ) dd^c f (v_b , v_a) \\[5pt]
     \nonumber&=&  \sum_{a=1}^{2n}  dd^c f (J v_a , v_a) \\[5pt]
     \nonumber&=& 2 \sum_{a=1}^{n}  dd^c f (J e_a , e_a) \;.
\end{eqnarray*}

Now, we are going to prove the formula
\[
 \frac{1}{2} ( \omega ,\, dd^c f) = \Delta_d f + (df , \, \theta)\; .
\]

We begin with claiming that, for a Hermitian $g$-orthonormal frame 
as in \eqref{equa: ortonormal frame}, then there holds
\begin{align}\label{equa: claim one statement}
 \sum_{a=1}^{\dim_\mathbb{C}X}  dd^c f ( e_a , \, J e_a) = -\Delta_d f +  \sum_{i=1}^{2n} \nabla_{v_i}^{LC} v_i \cdot f +
\sum_{j=1}^{n} J[e_j , \, J e_j] \cdot f \, . 
\end{align}
In fact, 
\begin{align}\label{equa: claim one part one}
 dd^c f (e_k , \, J e_k) &= e_k d^c f (Je_k) - J e_k d^c f (e_k) - d^c f [e_k , \, Je_k] \\[5pt]
			 &= e_k d f (-J^2 e_k) - J e_k d f (-Je_k) - d f (-J [e_k , \, Je_k]) \nonumber \\[5pt]
			 &= e_k e_k f + J e_k Je_k  f +J [e_k , \, Je_k] f \, , \nonumber
\end{align}
and 
\begin{eqnarray}\label{equa: claim one part two}
 -\Delta_d f &=& -\delta df \;=\; \sum_{i=1}^{2n} (\nabla_{v_i}^{LC} df)(v_i) \\[5pt]
\nonumber &=&
\sum_{i=1}^{2n} \left(\nabla_{v_i}^{LC} (df(v_i)) -(df)(\nabla_{v_i}^{LC} v_i) \right) \\[5pt]
\nonumber &=& \sum_{i=1}^{2n} \left( v_i v_i f -\nabla_{v_i}^{LC} v_i f \right) \; .
\end{eqnarray}
Whence putting together \eqref{equa: claim one part one} and \eqref{equa: claim one part two} we get \eqref{equa: claim one statement}.
Now we claim that
\begin{align}
 \label{equa: claim two statement}
\sum_{j=1}^{n} J[e_j , \, J e_j] \cdot f = - \sum_{i=1}^{2n} \nabla_{v_i}^{Ch} v_i \cdot f \; .
\end{align}
In fact by definition 
\begin{align}\label{equa: claim two part one}
 \sum_{j=1}^{n} J[e_j , \, J e_j] \cdot f &= \sum_{j=1}^{n} \left( J \nabla_{e_j}^{Ch} J e_j \cdot f 
-J \nabla_{Je_j}^{Ch}  e_j \cdot f -JT^{Ch}(e_j , \, J e_j) \cdot f\right) \\[5pt]
 \nonumber &=  \sum_{j=1}^{n} \left( - \nabla_{e_j}^{Ch}  e_j \cdot f 
- \nabla_{Je_j}^{Ch} J e_j \cdot f \right) \, ,
\end{align}
where we used that $T^{Ch}$ is a $J$-anti-invariant differential form, \cite[page 273]{gauduchon-bumi}, and that $\nabla^{Ch}J=0$. Then \eqref{equa: claim two part one} gives
the claimed \eqref{equa: claim two statement}.
We proved so far that
\begin{align*}
 \frac{1}{2}(\omega , \, dd^c f) = 
\Delta_d f - \sum_{i=1}^{2n} \nabla_{v_i}^{LC} v_i \cdot f + \sum_{i=1}^{2n} \nabla_{v_i}^{Ch} v_i \cdot f \; .
\end{align*}
Whence we will be done when we will show that
\begin{align}\label{equa: claim three statement}
 (df , \, \theta ) = - \sum_{i=1}^{2n} \nabla_{v_i}^{LC} v_i \cdot f + \sum_{i=1}^{2n} \nabla_{v_i}^{Ch} v_i \cdot f \; .
\end{align}
In fact
\begin{align}\label{equa: claim three part one}
 \theta (Y) &:=  \sum_{i=1}^{2n} g(T^{Ch} (Y, \, v_i) ,\, v_i) 
= \sum_{i=1}^{2n} g(\nabla_Y^{Ch}v_i - \nabla_{v_i}^{Ch} Y - [Y, \, v_i]  ,\, v_i)\\[5pt]
\nonumber &= \sum_{i=1}^{2n} \left( \frac{1}{2} Yg(v_i , \, v_i) - v_i g(Y,  \, v_i ) +g(Y,\, \nabla^{Ch}_{v_i} v_i) -g( [Y, \, v_i]  ,\, v_i) \right)\\[5pt]
\nonumber &= \sum_{i=1}^{2n} \left( - g(\nabla_{v_i}^{LC} Y,  \, v_i )- g(Y,  \, \nabla_{v_i}^{LC} v_i ) +g(Y,\, \nabla^{Ch}_{v_i} v_i) -g( [Y, \, v_i]  ,\, v_i) \right)\\[5pt]
\nonumber &= \sum_{i=1}^{2n} \left( - g(\nabla_{Y}^{LC} v_i,  \, v_i )- g(Y,  \, \nabla_{v_i}^{LC} v_i ) +g(Y,\, \nabla^{Ch}_{v_i} v_i)  \right)\\[5pt]
\nonumber &= \sum_{i=1}^{2n} \left( -\frac{1}{2} Y g( v_i,  \, v_i )+ g(Y,  \, \nabla^{Ch}_{v_i} v_i-\nabla_{v_i}^{LC} v_i )  \right)\\[5pt]
\nonumber &= \sum_{i=1}^{2n}  g(Y,  \, \nabla^{Ch}_{v_i} v_i-\nabla_{v_i}^{LC} v_i )  \; .
\end{align}
Now, by taking $Y=\mathrm{grad}_g f$, we see that \eqref{equa: claim three part one} gives precisely \eqref{equa: claim three statement}.
This completes the proof of the Lemma.
\end{proof}

\medskip

The following allows to recover the expression for the Chern scalar curvature, see Proposition \ref{prop:sch-local-coord}.
(We use here, as elsewhere in the note, the Einstein notation.)

\renewcommand{\referenza}{\ref{prop: J parallel connection scalar curvature}}
\begin{prop*}
 Let $X$ be a $2n$-dimensional smooth manifold endowed with an almost complex structure $J$, and let $\nabla$ be an affine connection on $T^{1,0}X$, for which there holds $\nabla J =0$. Take $h$ a Hermitian metric on $X$. Fix $\{ Z_i \}_{i= 1 , \ldots , n}$ a local coordinate frame of vector fields on $T^{1,0}X$.
 Then the scalar curvature of $\nabla$ is given by
 $$ S^{\nabla} = h^{i\bar j} \left( \partial_i \Gamma_{\bar j k}^k - \partial_{\bar j} \Gamma_{ik}^k \right) \;.  $$
\end{prop*}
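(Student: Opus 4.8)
The plan is to compute the curvature tensor of $\nabla$ directly in the given coordinate frame, trace over the endomorphism indices, and contract the result against $h$ to recover the scalar curvature. The only structural inputs are that $\nabla J=0$, so that $\nabla$ genuinely restricts to a connection on $T^{1,0}X$ with Christoffel symbols $\nabla_{Z_a}Z_k=\Gamma_{ak}^\ell Z_\ell$ (where $a$ ranges over both holomorphic and antiholomorphic indices), and that $\{Z_i\}$ together with its conjugate is a coordinate frame, hence $[Z_a,Z_b]=0$ for all $a,b$. Thus the bracket term in the curvature drops and
$$ R(Z_a,Z_b)Z_k \;=\; \nabla_{Z_a}\nabla_{Z_b}Z_k - \nabla_{Z_b}\nabla_{Z_a}Z_k \;=\; \left( \partial_a\Gamma_{bk}^\ell - \partial_b\Gamma_{ak}^\ell + \Gamma_{bk}^m\Gamma_{am}^\ell - \Gamma_{ak}^m\Gamma_{bm}^\ell \right) Z_\ell \;. $$

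The decisive simplification comes from the trace over the endomorphism indices ($\ell=k$): after relabelling the summation indices $k\leftrightarrow m$, the two quadratic terms $\Gamma_{bk}^m\Gamma_{am}^k$ and $\Gamma_{ak}^m\Gamma_{bm}^k$ coincide and cancel, leaving the first Ricci form
$$ \rho_{ab} \;:=\; R_{ab\,k}^{\quad\ k} \;=\; \partial_a\Gamma_{bk}^k - \partial_b\Gamma_{ak}^k \;. $$
I would then take $S^\nabla$ to be $\mathrm{tr}_h$ of this trace-curvature, that is, the pairing of $\rho$ against $h^{i\bar j}$; since $h^{i\bar j}$ couples one holomorphic to one antiholomorphic index, only the $(a,b)=(i,\bar j)$ component survives, and setting $a=i$, $b=\bar j$ yields precisely
$$ S^\nabla \;=\; h^{i\bar j}\rho_{i\bar j} \;=\; h^{i\bar j}\left( \partial_i\Gamma_{\bar j k}^k - \partial_{\bar j}\Gamma_{ik}^k \right) \;, $$
as claimed.

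The curvature expansion and the cancellation of the quadratic Christoffel terms are routine; the point requiring care is the bookkeeping at the level of the definition. One must fix the convention for $S^\nabla$ as the double contraction (endomorphism trace followed by $\mathrm{tr}_h$) and check that the $(2,0)$ and $(0,2)$ parts of $\rho$ --- which, for a general $J$-parallel $\nabla$, need not vanish as they do for the Chern connection --- drop out of the metric pairing, so that the answer is exactly the $(1,1)$-contraction above with no extra factors. Once this is in place the formula is immediate, and it specializes to the Chern connection through the identities $\Gamma_{ij}^j=\partial_i\log\det h_{k\bar\ell}$ and $\Gamma_{\bar j k}^\ell=0$, recovering the expression $-h^{i\bar j}\partial_i\partial_{\bar j}\log\det h$ of Proposition~\ref{prop:sch-local-coord}.
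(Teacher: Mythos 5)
Your proposal is correct and follows essentially the same route as the paper: expand the curvature in the coordinate frame (so the bracket term vanishes), use $\nabla J=0$ to keep everything in $T^{1,0}X$, and observe that the quadratic Christoffel terms cancel under the trace, leaving $h^{i\bar j}(\partial_i\Gamma_{\bar jk}^k-\partial_{\bar j}\Gamma_{ik}^k)$. The only cosmetic difference is that you take the endomorphism trace before contracting with $h^{i\bar j}$, whereas the paper performs the double contraction $h^{i\bar j}h^{k\bar\ell}R_{i\bar jk\bar\ell}$ in one step; the cancellation is identical.
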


\begin{proof}
Recall that the scalar curvature is given by
$$ S^{\nabla} = h^{i\bar j}h^{k\bar \ell} R_{i\bar j k \bar \ell}\; , $$
and thus we are led to compute the curvature coefficients 
$$ R_{i\bar j k \bar \ell}= h ( R(Z_i , \, Z_{\bar j} ) Z_k  , \, Z_{\bar \ell} )   \;  .$$
Now, since the frame is coordinate, then all the brackets vanishes.
Moreover, as $\nabla J = 0$, then   $R(Z_i , \, Z_{\bar j} ) Z_k $ is of the same type as $Z_k$.
We have
\begin{align*}
 R_{i\bar j k \bar \ell} &= h\left( \nabla_{Z_i} (\Gamma_{\bar j k}^m Z_m) - \nabla_{Z_{\bar j}} (\Gamma_{ik}^m Z_m) , \, Z_{\bar \ell}  \right)\\[5pt]
 &= \Gamma_{\bar j k}^m \Gamma_{im}^s h_{s \bar \ell} + (\partial_i \Gamma_{\bar j k}^m)h_{m\bar \ell}
    -\Gamma_{ik}^m \Gamma_{\bar j m}^s h_{s\bar \ell} - (\partial_{\bar j}\Gamma_{ik}^m) h_{m\bar \ell} \; .
\end{align*}
Whence, we conclude that
\begin{align*}
 h^{i\bar j}h^{k\bar \ell}R_{i\bar j k \bar \ell} &= h^{i\bar j}h^{k\bar \ell}(\Gamma_{\bar j k}^m \Gamma_{im}^s h_{s \bar \ell} + (\partial_i \Gamma_{\bar j k}^m)h_{m\bar \ell}
    -\Gamma_{ik}^m \Gamma_{\bar j m}^s h_{s\bar \ell} - (\partial_{\bar j}\Gamma_{ik}^m) h_{m\bar \ell}) \\[5pt]
 &=h^{i\bar j} (\Gamma_{\bar j k}^m \Gamma_{im}^k  + \partial_i \Gamma_{\bar j k}^k
    -\Gamma_{ik}^m \Gamma_{\bar j m}^k  - \partial_{\bar j}\Gamma_{ik}^k ) \\[5pt]
 &=h^{i\bar j} (\partial_i \Gamma_{\bar j k}^k - \partial_{\bar j}\Gamma_{ik}^k ) \, ,
\end{align*}
which is the claimed formula.
\end{proof}

\end{document}